\documentclass[12pt, reqno]{amsart}
\usepackage{amsmath}
\usepackage{amssymb}
\usepackage{epsfig}
\usepackage{graphicx}
\usepackage{color}
\definecolor{shadecolor}{gray}{0.875}
\usepackage{amscd}
\usepackage{comment}
\usepackage{enumitem}
\usepackage{mathrsfs}

\usepackage[colorlinks=true]{hyperref}

\usepackage[OT2,T1]{fontenc}
\DeclareSymbolFont{cyrletters}{OT2}{wncyr}{m}{n}
\DeclareMathSymbol{\Sha}{\mathalpha}{cyrletters}{"58}

\numberwithin{equation}{section}

\input xy
\xyoption{all}

\calclayout
\allowdisplaybreaks[3]

\theoremstyle{plain}
\newtheorem{prop}{Proposition}[section]

\newtheorem{theo}[prop]{Theorem}
\newtheorem{coro}[prop]{Corollary}

\newtheorem{lemm}[prop]{Lemma}

\theoremstyle{definition}
\newtheorem{defi}[prop]{Definition}

\newtheorem{conj}[prop]{Conjecture}

\newtheorem{rema}[prop]{Remark}

\newtheorem{prin}[prop]{Principle}
\newtheorem{exam}[prop]{Example}

\def\rk{{\mathrm{rk}}}

\def\rH{{\mathrm H}}

\def\Br{\mathrm{Br}}

\def\Pic{\mathrm{Pic}}

\def\Nef{\mathrm{Nef}}

\def\Supp{\mathrm{Supp}}

\def\Spec{\mathrm{Spec}}

\def\Pic{\mathrm{Pic}}

\makeatother
\makeatletter

 \author{Qile Chen}
\address{Department of Mathematics \\
Boston College  \\
Chestnut Hill, MA \, \, 02467}
\email{qile.chen@bc.edu}

\author{Brian Lehmann}
\address{Department of Mathematics \\
Boston College  \\
Chestnut Hill, MA \, \, 02467}
\email{lehmannb@bc.edu}

\author{Sho Tanimoto}
\address{Graduate School of Mathematics, Nagoya University, Furocho Chikusa-ku, Nagoya, 464-8602, Japan}
\email{sho.tanimoto@math.nagoya-u.ac.jp}

\title[Manin's conjecture for semi-integral curves]{Manin's conjecture for semi-integral curves and $\mathbb A^1$-connectedness}

\begin{document}
\date{\today}

\begin{abstract} 
We explore log Manin's conjecture for integral points and its connections to $\mathbb A^1$-connectedness. We prove log Manin's conjecture for Campana rational curves and for $\mathbb A^1$-curves on split toric varieties.  Our arguments combine the Cox ring description of the moduli space of rational curves with Batyrev's heuristic-type counting arguments.  As our proofs are geometric in nature, they give a geometric explanation of the mysterious leading constant for Campana points proposed in \cite{CLTBT}.
\end{abstract}

\maketitle

\setcounter{tocdepth}{1}
\tableofcontents

\section{Introduction}

One of the central themes in diophantine geometry is the distribution of rational points on algebraic varieties, and one of the main questions here is the asymptotic formula for the counting function of rational points of bounded height on a projective variety defined over a global field. This leads to Manin's conjecture predicting the asymptotic formula for the counting function of rational points on a smooth Fano variety defined over a number field (\cite{FMT, BM, Peyre, BT, Peyre03, Peyre17, LST18, LS24}), and its geometric counterparts over function fields of curves (\cite{Bat88, Bou09, Bou, Bou12, Bourqui13, Peyre12, LT17, LRT23, LT26, CLL16, Bilu23}). See \cite{Tanimoto26} and \cite{LRT23, LT26} for modern formulations of Manin's conjecture.

Around a decade ago, Chambert-Loir, Takloo-Bighash, and Tschinkel produced a series of works on log Manin's conjecture for integral points (\cite{CLT12, TBT13, CLTtoric}). Here integral points are associated to a log pair $(\underline{X}, \Delta)$ such that $\Delta$ is a reduced effective divisor. However, the proof of log Manin's conjecture for integral points on toric varieties in \cite{CLTtoric} had a non-trivial gap. This situation sparked extensive activities in the field, and it led to the study of log Manin's conjecture for semi-integral points including Campana points, Darmon points, and $\mathcal M$-points (\cite{BY21, PSTVA, Streeter22, SS24, Shute22, CLTBT, Moerman1, Moerman2}). Semi-integral points are certain rational points associated to a log pair $(\underline{X}, \Delta)$ with $\Delta$ being an effective $\mathbb Q$-divisor with standard coefficients, and they interpolate between the notion of rational points and integral points. Moreover log Manin's conjecture for integral points has also witnessed major breakthroughs in \cite{Wilsch} and \cite{Santens}; the latter paper finished the proof of log Manin's conjecture for integral points on toric varieties.  \cite{Santens} also proposed a general formulation of log Manin's conjecture for integral points.

In this paper we have two goals.  First, we make a new observation concerning the geometric aspects of the formulation of log Manin's conjecture for integral points.  Second, we prove log Manin's conjecture for Campana rational curves and $\mathbb A^1$-curves on split smooth projective toric varieties defined over finite fields.  

Over global function fields, the most closely related prior work is \cite{Faisant25campana} which analyzes Campana points on toric varieties using the Grothendieck ring of varieties; see Section \ref{sect:history} for more background.

\subsection{Main results}

We explain the main results in this paper:

\subsubsection*{$\mathbb A^1$-connectedness}

Let $(\underline{X}, \Delta)$ be a pair such that $\underline{X}$ is a smooth projective variety defined over a number field $F$ and $\Delta$ be an effective reduced divisor on $\underline{X}$. We fix a projective integral model $\mathcal X$ of $\underline{X}$ over $\Spec \, \mathfrak o_F$ where $\mathfrak o_F$ is the ring of integers for $F$.
We denote the flat closure of $\Delta$ in $\mathcal X$ by $\widetilde{\Delta}$ and let $\mathcal U = \mathcal X \setminus \mathrm{Supp}(\widetilde{\Delta})$. Then the set of $\mathfrak o_F$-integral points is $\mathcal U(\mathfrak o_F)$. \cite{Santens} resolved log Manin's conjecture for integral points on a smooth projective toric variety and proposed a formulation of log Manin's conjecture for integral points when $(\underline{X}, \Delta)$ is a log Fano variety. Santens used the following assumption throughout his paper:
\[
\overline{F}[\underline{U}]^\times = \overline{F}^\times,
\]
where $\underline{U}$ is the generic fiber of $\mathcal U$.

In this paper, we make the following observation:
\begin{prop}[Proposition~\ref{prop:equivconditionstoric}]
    Let $\mathbf k$ be an algebraically closed field and $\underline{X}$ be a smooth projective toric variety defined over $\mathbf k$ with a SNC reduced boundary divisor $D$ consisting of some torus invariant components 
    with $\underline{U} = \underline{X} \setminus \mathrm{Supp}(D)$. Then the following statements are equivalent:
    \begin{itemize}
        \item $\mathbf k [\underline{U}]^\times = \mathbf k^\times$, and;
        \item $\underline{U}$ is separably $\mathbb A^1$-connected.
    \end{itemize}
\end{prop}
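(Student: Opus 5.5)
The implication from triviality of units to separable $\mathbb A^1$-connectedness carries the real content; the converse is a short argument with units. Throughout I take the definition of separable $\mathbb A^1$-connectedness used later in the paper: there is an $\mathbb A^1$-curve $f\colon \mathbb P^1\to \underline{X}$, meaning $f^{-1}(\mathrm{Supp}(D))$ is a single point $\infty$ (set-theoretically), such that $f^*T_{\underline{X}}(-\log D)$ is ample.

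\emph{Units are constant.} Assume $\underline{U}$ is separably $\mathbb A^1$-connected and let $u\in \mathbf k[\underline{U}]^\times$. By the standard deformation argument for very free curves, a family of such $\mathbb A^1$-curves dominates $\underline{U}\times\underline{U}$. Hence two general points $x,y\in \underline{U}$ lie on the image of one $\mathbb A^1$-curve $f$, i.e.\ on $f(\mathbb A^1)$. The pullback $f^*u$ is a unit on $\mathbb A^1$, so it is constant, and therefore $u(x)=u(y)$. Since $x,y$ are general, $u$ is constant.

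\emph{Reformulating the unit condition torically.} Let $\Sigma$ be the fan of $\underline{X}$ in $N_{\mathbb R}$, and for each ray $\rho$ let $v_\rho$ be its primitive generator. Split the rays into $\Sigma(1)=\Sigma_D\sqcup\Sigma_U$, according to whether $D_\rho$ lies in $D$. Every unit on $\underline{U}$ is a scalar times a character $\chi^m$. Such a character is regular and invertible on $\underline{U}$ exactly when $\langle m, v_\rho\rangle=0$ for all $\rho\in\Sigma_U$. So $\mathbf k[\underline{U}]^\times=\mathbf k^\times$ is equivalent to $\{v_\rho\}_{\rho\in\Sigma_U}$ spanning $N_{\mathbb Q}$.

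\emph{Constructing the curve.} I would build $f$ in Cox coordinates. Fix a cocharacter $w\in N$ and nonnegative integers $d_\rho$ for $\rho\in\Sigma_U$ satisfying the balancing condition
\[
w+\sum_{\rho\in\Sigma_U} d_\rho v_\rho=0 .
\]
Choose distinct general points $a_{\rho,j}\in\mathbb A^1$. The map $t\mapsto c\cdot\prod_{\rho,j}(t-a_{\rho,j})^{v_\rho}$, with $c\in T$, extends to $\mathbb P^1\to\underline{X}$. It meets $D_\rho$ transversally at the $a_{\rho,j}$, and at $\infty$ it enters the orbit of the cone $\sigma$ whose relative interior contains $w$. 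It therefore meets $\mathrm{Supp}(D)$ only at $\infty$, as required; it may also meet some $D_\rho$ with $\rho\in\Sigma_U$ at $\infty$, which is allowed.

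The spanning condition lets us take all $d_\rho>0$ and, after scaling by a large integer $k$, arrange that $w$ lies in the relative interior of a cone $\sigma$ that contains at least one ray in $\Sigma_D$. If $D=0$ the statement is classical, since $\underline{X}$ is rationally connected. Concretely, pick $w$ in the interior of a maximal cone adjacent to a ray of $\Sigma_D$. Then solve $-w=\sum d_\rho v_\rho$ with all $d_\rho>0$; this is possible because the $v_\rho$, $\rho\in\Sigma_U$, span $N_{\mathbb Q}$, so a large multiple of their sum can absorb any vector.

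\emph{Ampleness, the main obstacle.} I would use the log Euler sequence
\[
0\to \mathrm{Pic}(\underline{X})^\vee\otimes\mathcal O\to \bigoplus_{\rho\in\Sigma_U}\mathcal O(D_\rho)\oplus\bigoplus_{\rho\in\Sigma_D}\mathcal O\to T_{\underline{X}}(-\log D)\to 0 .
\]
The trivial summands prevent reading off ampleness directly. Instead I would verify the equivalent condition that $H^1(\mathbb P^1, f^*T_{\underline{X}}(-\log D)(-2))=0$. To do so, I would show that the deformations of $f$ fixing the point $\infty$ surject onto the fibres of $f^*T_{\underline{X}}(-\log D)$ at two general points. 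These deformations include varying the $a_{\rho,j}$ and the constant $c\in T$. Varying $c$ gives $N\otimes\mathbf k$. Varying $a_{\rho,j}$ gives the sections $v_\rho/(t-a_{\rho,j})$, which are regular sections of the log tangent bundle twisted appropriately at $a_{\rho,j}$. Since the $v_\rho$ span $N$ and each occurs with multiplicity $kd_\rho\ge 2$, evaluating these sections at two general points $t_1,t_2$ gives a surjection onto $N\oplus N$; this is a Vandermonde-type determinant computation. Because everything happens in characteristic-free explicit coordinates, this also yields the separability. The delicate part is checking that the moved curves keep $f^{-1}(D)=\{\infty\}$, which holds because $w$ and the cone $\sigma$ are unchanged, and doing the two-point surjectivity bookkeeping cleanly.
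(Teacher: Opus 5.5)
Your first direction (units are constant) and your reformulation of $\mathbf k[\underline U]^\times=\mathbf k^\times$ as ``the $v_\rho$, $\rho\in\Sigma_U$, span $N_{\mathbb Q}$'' are correct. They match the paper's (1)$\Rightarrow$(2) and (2)$\Leftrightarrow$(3).

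\textbf{The gap is in the ampleness step.} Your deformation sections are the constants $N\otimes\mathbf k$ (from moving $c$) and the sections $v_\rho/(t-a_{\rho,j})$. At two general points $t_1,t_2$, their evaluations give the diagonal plus differences lying in the $\mathbf k$-span of the $v_\rho$ inside $N\otimes\mathbf k$. So surjectivity onto $(N\otimes\mathbf k)^{\oplus 2}$ needs the $v_\rho$ to span $N\otimes\mathbf k$. Equivalently, the lattice $L=\sum_{\rho\in\Sigma_U}\mathbb Z v_\rho$ must have index prime to $\mathrm{char}\,\mathbf k$, and this does not follow from spanning $N_{\mathbb Q}$.

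In characteristic $0$ your argument goes through. Your remark that the explicit coordinates give separability in every characteristic is wrong, however. When $p=\mathrm{char}\,\mathbf k$ divides $[N:L]$, no argument can close the gap, because the conclusion fails.

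\emph{Counterexample.} Let $\underline X$ be the smooth projective toric surface with rays $(1,0),(1,1),(1,2),(0,1),(-1,0),(0,-1)$. Let $D$ be the sum of all boundary components except $D_{(1,0)}$ and $D_{(1,2)}$.
\begin{itemize}
\item Since $(1,0),(1,2)$ span $N_{\mathbb Q}$, we have $\mathbf k[\underline U]^\times=\mathbf k^\times$.
\item In characteristic $2$, put $y=\chi^{(0,1)}$. The form $dy/y$ has coefficient $\langle(0,1),(1,2)\rangle=2=0$ along $D_{(1,2)}$ and $0$ along $D_{(1,0)}$. So it is a non-zero global section of $\Omega_{\underline X}(\log D)$.
\item Pairing with it gives a non-zero map $f^*T_{\underline X}(-\log D)\to\mathcal O_{\mathbb P^1}$ for every $\mathbb A^1$-curve $f$ meeting the torus. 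An ample bundle on $\mathbb P^1$ admits no such map, so no $\mathbb A^1$-curve is very free.
\item Concretely: the finite points of such an $f$ can only meet the disjoint divisors $D_{(1,0)},D_{(1,2)}$, where $y\circ f$ has order $0$ or an even number. Hence $y\circ f$ is a constant times a square. The two-point evaluation therefore pulls back $y(P)/y(Q)$ to a square, so it is inseparable.
\end{itemize}

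The paper's own proof has the same hidden restriction. Its map $\mathbb A^n\dashrightarrow\underline U$, built from a $\mathbb Q$-spanning set of rays, is on tori the isogeny induced by $L\hookrightarrow N$, which is inseparable when $p\mid[N:L]$. Its (3)$\Leftrightarrow$(5) likewise needs spanning over $\mathbf k$. In positive characteristic the right criterion is that the $v_\rho$, $\rho\in\Sigma_U$, span $N\otimes\mathbf k$. Under that hypothesis your construction works verbatim, and the converse follows from $d\chi^m/\chi^m$ as in the example.

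\textbf{A smaller, fixable slip: your choice of $w$.} If you first fix $w$ in the interior of a maximal cone adjacent to a ray of $\Sigma_D$, then $-w$ need not lie in $\mathrm{Cone}(v_\rho:\rho\in\Sigma_U)$, which is generally not all of $N_{\mathbb R}$. For $\mathbb P^2$ with $D$ a line this cone is a quadrant, and $w=(1,-1)$, in the interior of the cone spanned by $e_1$ and $-e_1-e_2$, cannot be absorbed. Adding a large multiple of $\sum v_\rho$ changes $w$, and hence $\sigma$. In any case you do not need $\sigma$ to contain a ray of $\Sigma_D$, since the paper's definition allows $f^*D=\emptyset$. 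Just take all $d_\rho=1$ and $w=-\sum_{\rho\in\Sigma_U}v_\rho$. Multiplicities $\geq 2$ are also unnecessary, because the torus translations already supply the diagonal.

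With these repairs, and in characteristic $0$ or under the $N\otimes\mathbf k$-spanning hypothesis, your route is a legitimate alternative to the paper's. You construct an explicit very free $\mathbb A^1$-curve and check ampleness via first-order deformations, whereas the paper dominates $\underline U$ by $\mathbb A^n$ through a toric map. Your route is longer, but it exposes exactly which lattice condition governs separability.
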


Note that in general if $\underline{U}$ is $\mathbb A^1$-connected, then $\mathbf k [\underline{U}]^\times = \mathbf k^\times$ is true (see Lemma~\ref{lemm:A1connectedimpliesnoinvertible}). However the converse is not true in general:
\begin{theo}[Example~\ref{rema:counterexample}]
    Let $\mathbf k$ be an algebraically closed field. Then there exists a weak log Fano variety $(\underline{X}, \Delta)$ with a SNC  boundary $\Delta$ such that $\underline{U} = \underline{X} \setminus \mathrm{Supp}(\Delta)$ satisfies
    \[
    \mathbf k [\underline{U}]^\times = \mathbf k^\times,
    \]
    but $\underline{U}$ is not $\mathbb A^1$-connected. Moreover there is a surjective morphism $$\phi : (\underline{X}, \Delta) \to (\underline{Y}, \Delta_Y),$$ such that $\mathrm{Supp}(\Delta) = \mathrm{Supp}(\phi^{-1}(\Delta_Y))$ and $(\underline{Y}, \Delta_Y)$ is a log Calabi-Yau pair, i.e., $K_{\underline{Y}} + \Delta_Y \sim 0$.
\end{theo}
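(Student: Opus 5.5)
The plan is to build the example as the total space of a $\mathbb P^1$-bundle over a log Calabi--Yau surface. Take $\underline{Y} = \mathbb P^2$ with $\Delta_Y = E$ a smooth plane cubic, so $K_{\underline{Y}} + E \sim 0$. Let $S \subset \mathbb P^3$ be the projective cone over $E$ with vertex $v$. Let $\underline{X} = \mathrm{Bl}_v \mathbb P^3 \cong \mathbb P(\mathcal O \oplus \mathcal O(1))$, with blow-up map $\pi$, exceptional divisor $F \cong \mathbb P^2$, and the $\mathbb P^1$-bundle $\phi : \underline{X} \to \mathbb P^2$ given by projection from $v$. Write $H$ for the hyperplane class on $\mathbb P^3$. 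The boundary is $\Delta = S' + F$, where $S' = \phi^{-1}(E)$ is the strict transform of $S$.

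First I check the log Fano and SNC conditions. Since $S$ has multiplicity $3$ at $v$, we have $K_{\underline X} = \pi^*K_{\mathbb P^3} + 2F$ and $S' = \pi^*S - 3F$. Hence
\[
K_{\underline X} + S' + F = -\pi^*H,
\]
which gives $-(K_{\underline X}+\Delta) = \pi^*H$, nef and big. So $(\underline X,\Delta)$ is weak log Fano. The divisor $S'$ is a smooth ruled surface over $E$, and it meets the section $F$ transversally along a copy of $E$, so $\Delta$ is SNC.

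Next I show $\mathbf k[\underline U]^\times = \mathbf k^\times$. We have $\underline U = \underline X \setminus (S' \cup F) \cong \mathbb P^3 \setminus S$. A unit on $\mathbb P^3 \setminus S$ has divisor $mS$ for some $m$, and this is principal only when $m = 0$ because $\Pic(\mathbb P^3) = \mathbb Z$. So every unit is constant.

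The heart of the argument is that $\underline U$ is not $\mathbb A^1$-connected, and this is the step I expect to be the main obstacle. Suppose it were. Then there is a very free (or at least dominating) family of $\mathbb A^1$-curves through general points. Composing with $\phi$ gives $\mathbb A^1$-curves in $\mathbb P^2 \setminus E$, unless the composite is constant. I would argue that $\mathbb P^2 \setminus E$ carries no dominating family of nonconstant $\mathbb A^1$-curves. For $f:\mathbb P^1 \to \mathbb P^2$ with $f^{-1}(E) = \{p\}$, the log tangent bundle $f^*T_{\mathbb P^2}(-\log E)$ has degree $-(K+E)\cdot C = 0$. It contains $T_{\mathbb P^1}(-p) \cong \mathcal O(1)$, so it splits as $\mathcal O(1)\oplus\mathcal O(-1)$. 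Such curves are therefore rigid as log maps. Concretely, the contact point must satisfy $3d\cdot p \sim \mathcal O(d)|_E$, leaving only countably many choices. So the images of the family are constant. Then every $\mathbb A^1$-curve in the family lies in a fibre of $\phi|_{\underline U}$, and such curves cannot connect two general points. For the separable/very free version I would instead use the positivity of $f^*T_{\underline X}(-\log\Delta)(-q)$ and push it forward along $d\phi$, which contradicts the splitting just computed. In positive characteristic the rigidity must be handled via deformation theory rather than a dimension count.

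Finally, for the requirement $\mathrm{Supp}(\Delta) = \mathrm{Supp}(\phi^{-1}(\Delta_Y))$: with the choice above, $F$ is a horizontal section, not a fibre component. This condition is the delicate point of the construction. I would therefore first try to modify $(\underline Y,\Delta_Y)$ so that $F$ becomes vertical, for instance by replacing the target with $\underline Y = \mathbb P^2 \times \mathbb P^1$ and $\Delta_Y = E\times\mathbb P^1 + \mathbb P^2\times\{0\} + \mathbb P^2\times\{\infty\}$, together with a suitable birational model of $\underline X$ mapping to it. The units and non-$\mathbb A^1$-connectedness arguments above would then be rerun for that model.
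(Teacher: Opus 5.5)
Everything before your last paragraph is essentially the paper's argument. You use the same $\underline X=\mathrm{Bl}_v\mathbb P^3$ with $\Delta=S'+F$. You show in essentially the same way that $\underline U\cong\mathbb P^3\setminus S$ has only constant units. You make the same reduction, via projection from the vertex, of non-$\mathbb A^1$-connectedness to rigidity of $\mathbb A^1$-curves on the log Calabi--Yau pair $(\mathbb P^2,E)$. You supply more detail than the paper, e.g.\ $-(K_{\underline X}+\Delta)=\pi^*H$ and the $d\phi$ argument. One small correction: the splitting is $\mathcal O(a)\oplus\mathcal O(-a)$ with $a\geq 1$, which is all you actually use.

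\textbf{The gap is the final step.} Your proposed repair cannot work. Suppose $\phi$ is surjective and $\mathrm{Supp}(\Delta)=\mathrm{Supp}(\phi^{-1}(\Delta_Y))$. Then $\underline U=\phi^{-1}(\underline Y\setminus\Delta_Y)$ surjects onto $\underline Y\setminus\Delta_Y$, so $\phi^*$ embeds $\mathbf k[\underline Y\setminus\Delta_Y]^\times$ into $\mathbf k[\underline U]^\times$. For $\underline Y\setminus\Delta_Y=(\mathbb P^2\setminus E)\times\mathbb G_m$, the coordinate on $\mathbb G_m$ pulls back to a non-constant unit on $\underline U$. This destroys the first property you proved.

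There is an independent obstruction. A birational model of the threefold $\underline X$ mapping onto $\mathbb P^2\times\mathbb P^1$ is generically finite over it. For separable $\phi$, the determinant of $\phi^*\Omega_{\underline Y}(\log\Delta_Y)\to\Omega_{\underline X}(\log\Delta)$ makes $K_{\underline X}+\Delta-\phi^*(K_{\underline Y}+\Delta_Y)$ effective, so $-(K_{\underline X}+\Delta)$ could not be big. As written, the ``moreover'' clause is left unproved, and the route you sketch for it is blocked.

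The remedy is to keep your original $\phi$. The paper's morphism is exactly your projection $\phi:\underline X\to\mathbb P^2$ with $\Delta_Y=E$; the paper only remarks that $(\mathbb P^2,C)$ is log Calabi--Yau. It has precisely the feature you noticed: $F$ is a section of $\phi$. So what is actually established is $\mathrm{Supp}(\phi^{-1}(\Delta_Y))\subseteq\mathrm{Supp}(\Delta)$, equivalently $\phi(\underline U)\subseteq\underline Y\setminus\Delta_Y$.

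That inclusion is all the paper uses: integral points on $(\underline X,\Delta)$ map to integral points on $(\underline Y,\Delta_Y)$. The literal equality in the statement is not achieved by the paper's construction either. You should finish by stating and verifying the clause as this inclusion, rather than trying to make $F$ vertical.
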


In particular, integral points on $(\underline{X}, \Delta)$ map to integral points on $(\underline{Y}, \Delta_Y)$ and it seems unlikely that log Manin's conjecture should hold in this example.
\cite[Conjecture 6.1]{Santens} proposed a version of log Manin's conjecture for integral points.  In this conjecture Santens assumes that $\overline{F}[\underline{U}]^\times = \overline{F}^\times$.  In view of the above discussion, we propose the following modification:

\begin{conj}
    \cite[Conjecture 6.1]{Santens} holds under the additional assumption that $\underline{U}$ is geometrically $\mathbb A^1$-connected.
\end{conj}

\begin{rema} 
    Analytic obstructions as in \cite{Wilsch} and \cite{Santens} play an important role in counting integral points on toric varieties.  We show that they also admit a natural interpretation via the geometry of $\mathbb{A}^{1}$-curves; see Proposition \ref{prop:obstruction}.
\end{rema}

\subsubsection*{Campana rational curves on split toric varieties}

Let $\mathbf k = \mathbb F_q$ be a finite field and $F = \mathbb F_q(t)$. Let $\underline{X}$ be a smooth projective toric variety defined over $\mathbf k$ of dimension $n$ with the open orbit $\underline{T}$ such that the boundary divisor $\Delta = \sum_i \Delta_i$ is a SNC divisor. We also assume that $\underline{X}$ is split.
For each $i$, we associate to $\Delta_{i}$ a positive integer $m_i \in \mathbb Z_{>0}$.  We let $(X, \Delta_{\bold m})$ be the Campana orbifold with
\[
\Delta_{\bold m} = \sum_i \left( 1- \frac{1}{m_i}\right)\Delta_i.
\]
\begin{defi}
A rational curve $f : \underline{\mathbb P}^1 \to \underline{X}$ with $\underline{T}\cap f(\underline{\mathbb P}^1) \neq \emptyset$ is a Campana curve if for every $i$, each point in the support of $f^*\Delta_i$ occurs with multiplicity $\geq m_i$.
\end{defi}
Log Manin's conjecture for Campana rational curves concerns the asymptotic formula for the counting function of Campana rational curves of bounded degree $\deg (-f^*(K_{\underline{X}} + \Delta_{\bold m}))$.
Regarding this conjecture we prove
\begin{theo}[Theorem~\ref{theo:Campana}]
\label{theo:Campanaintro}
    An analogue of log Manin's conjecture for Campana points on $(X, \Delta_{\bold m})$ holds for Campana rational curves with respect to the log anticanonical class as predicted by \cite{PSTVA}. Moreover, the leading constant is compatible with the conjecture in \cite{CLTBT}. 
\end{theo}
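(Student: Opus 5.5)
We will parametrize Campana curves by the Cox ring, count them via Batyrev-style geometric estimates, and compare the resulting leading constant with the conjectural one.

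\textbf{Parametrization.} Since $\underline{X}$ is split and smooth, it has a Cox ring $\mathbf k[x_1,\dots,x_N]$, one variable per boundary component $\Delta_i$, together with the torus $G=\mathrm{Hom}(\Pic(\underline{X}),\mathbb G_m)$. By the standard Cox description, a morphism $f:\underline{\mathbb P}^1\to\underline{X}$ meeting $\underline{T}$ with $f^*\Delta_i$ of degree $d_i$ is given by an $N$-tuple of nonzero forms $(s_1,\dots,s_N)$ with $\deg s_i=d_i$. The vector $(d_i)$ must lie in the image of $\Pic(\underline{X})^\vee$, and the tuple is taken modulo $G(\mathbf k)$, subject to the coprimality condition imposed by the irrelevant ideal: for every primitive collection, the corresponding $s_i$ have no common zero. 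The Campana condition says that each $s_i$ factors as $\prod_j p_j^{e_j}$ with every $e_j\ge m_i$. So the Campana curves of class $\alpha$ form a locally closed subset of $\prod_i \mathbb P(H^0(\mathcal O(d_i)))$ modulo $G$. Its $\mathbb F_q$-points are counted by a multiplicative function: a product over closed points $v$ of $\mathbb P^1$ of local densities, with coprimality imposed as an Euler product via Möbius inversion over the primitive collections.

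\textbf{Counting.} First I would compute the number $N_i(d)$ of degree-$d$ forms all of whose root multiplicities are at least $m_i$. Its generating function is $Z_{\mathbb P^1}$-type: $\prod_v (1+\sum_{e\ge m_i} t^{e\deg v})$. Its dominant singularity is at $|t|=q^{-1/m_i}$, which gives $N_i(d)\sim c_i q^{d/m_i}$ when $m_i\mid d$, with periodicity otherwise. Multiplying over $i$ and summing over classes $\alpha$ with $-(K_X+\Delta_{\mathbf m})\cdot\alpha=\sum_i d_i/m_i\le B$ turns the count into a lattice-point sum over the dual cone. A height zeta function then yields growth $q^{B}B^{b-1}$, where $b$ is the rank of $\Pic$ (after fixing the congruence/periodicity classes). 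This matches the prediction of \cite{PSTVA}, because the log anticanonical class is adjoint-rigid and its $a$-invariant equals $1$ here.

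\textbf{Main obstacle.} The hard part is the error analysis, not the main term. The coprimality condition has to be removed uniformly in $\alpha$, including near the boundary of the effective cone, where some $d_i$ are small and the asymptotic for $N_i(d_i)$ fails. Campana curves with some $d_i$ small also need care, as do the non-free or degenerate components. I would first try to bound these contributions directly by the number of tuples: show that classes $\alpha$ within distance $\epsilon B$ of a face of the nef dual cone contribute $O(q^B B^{b-2})$. I would handle the Möbius sum by truncating at degree $\epsilon B$ and bounding the tail geometrically using codimension estimates.

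\textbf{Leading constant.} Finally, I would rewrite the constant. The Euler product of local densities becomes $\prod_v$ of the local Tamagawa-type measures of \cite{CLTBT}, restricted to $m$-points, with convergence factors $(1-q^{-\deg v})^{b}$. The cone-volume factor becomes $\alpha(X,\Delta_{\mathbf m})$. The periodicity from $m_i\mid d_i$ produces the extra finite index factor that appears in \cite{CLTBT}; identifying this index with the group of characters they use is the last step.
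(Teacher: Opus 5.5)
Your Cox-ring parametrization and your treatment of the boundary of the cone are essentially sound. Dropping coprimality gives $\#\underline U_{\mathbf r,\mathbf m}(\mathbf k)\le\prod_i N_i(r_i)\le \mathsf C\,q^{\sum_i r_i/m_i}$, which is all the paper uses there. One correction: the classes in the complementary cone $\mathsf C_\epsilon$ then contribute $\ll\alpha(\mathsf C_\epsilon)\,q^{B}B^{b-1}$, not $O(q^BB^{b-2})$, so you must let $\epsilon\to0$ at the end, as the paper does.

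The genuine gap is in how the periodicity enters the main term, and hence in the leading constant, which is half of the statement. In the variables $u_i=q^{-t_i}$, the series $\sum_{\mathbf r}\prod_i u_i^{r_i}\#\underline U_{\mathbf r,\mathbf m}(\mathbf k)$ has poles along $u_i=\zeta_i q^{-1/m_i}$ for \emph{every} $\zeta_i$ with $\zeta_i^{m_i}=1$, not only the real one. The leading coefficient at $(\zeta_i)_i$ is a \emph{twisted} Euler product whose local factor at $c$ weights $\mathbf r$ by $\prod_i\zeta_i^{|c|r_i}$; these are the paper's $\tau_{\mathbf a}$. Consequently:
\begin{itemize}
\item for $\mathbf r\equiv\mathbf b\bmod\mathbf m$ the main term is, up to $(q/(q-1))^n$, the Fourier combination $m^{-\#\Sigma^{(1)}}\sum_{\mathbf a}\chi_{\mathbf b}(\mathbf a,\mathbf 0)\tau_{\mathbf a}$;
\item summing over the residue classes realized by $N_1(\underline X)_{\mathbb Z}$ leaves $(\prod_i m_i)^{-1}\sum_{\mathbf a\in\mathcal X(\underline T)_{\mathbf m}}\tau_{\mathbf a}$.
\end{itemize}
This is \emph{not} a product of local Tamagawa measures of $\mathbf m$-points times a finite index. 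The $\tau_{\mathbf a}$ with $\mathbf a\neq\mathbf 0$ are in general nonzero and differ from $\tau_{\mathbf 0}$. Already for $\underline{\mathbb P}^1$ with $m_1=m_2=2$ one has $\mathbf a=(1,1)\in\mathcal X(\underline T)_{\mathbf m}$, and the twist $(-1)^{|c|(r_1+r_2)}$ changes the local factor at every odd-degree place. By orthogonality, the sum is a multiple of the height integral over the non-product subgroup of adeles on which all $\widetilde\chi_{\mathbf 0}(\mathbf a,-)$ are trivial. That is exactly what distinguishes the constant of \cite{CLTBT}, a sum of Fourier transforms over Campana Brauer classes, from a single Tamagawa volume. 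So your final step, as described, would produce the wrong constant.

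The missing ingredients are:
\begin{itemize}
\item the twisted virtual height zeta functions $\mathsf Z_{\mathbf a,\mathbf b}$ and their identification with twisted height integrals;
\item orthogonality in $\mathbf b$ and over the lattice;
\item the identification of $\{\widetilde\chi_{\mathbf 0}(\mathbf a,-)\}_{\mathbf a\in\mathcal X(\underline T)_{\mathbf m}}$ with the unramified automorphic characters satisfying $\chi_i^{m_i}=1$, and then, via Loughran's duality (re-checked over function fields in the appendix), with the $\mathsf K$-invariant Campana Brauer classes $b$, for which $\tau_{\mathbf a}=\hat\tau_{X,\Delta_{\mathbf m}}(b)$.
\end{itemize}
Relatedly, positivity of the constant is no longer automatic; the paper proves it by comparison with Darmon points.

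A secondary issue concerns the main-term count itself. For a fixed class the count is not an Euler product. Also, Möbius inversion over primitive collections interacts poorly with the Campana condition: dividing a form vanishing to order $\ge m_i$ at $v$ by $v^{m_i}$ leaves no condition at $v$. So the inverted terms are not counts of Campana forms of lower degree, and you would need modified local densities anyway.

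The paper avoids any sieve by summing over \emph{all} tuples $\mathbf r$, balanced or not. The local condition is multiplicative: each $r_i\in\{0\}\cup[m_i,\infty)$, and $\{i: r_i>0\}$ is the set of rays of a cone of $\Sigma$. The resulting local factors coincide with Denef's formula for the local height integrals. This gives, uniformly, $\#\underline U_{\mathbf r,\mathbf m}(\mathbf k)\,q^{-\sum_i r_i/m_i}=\mathrm{const}_{\mathbf b}+O(q^{-\eta\min_i r_i})$. The balancing condition is imposed only afterwards, through the lattice count in the shrunken cone.
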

See Section~\ref{sec:Campana} for a more precise statement.
In particular, the leading constant admits a description involving a finite sum of height integrals twisted by the unramified automorphic characters induced by Brauer elements as predicted by \cite{CLTBT}. See Section~\ref{subsec:leadingconstandforCampana} for more details.

We should note that \cite[p. 7]{Faisant} claims that his motivic proof for a motivic version of log Manin's conjecture for Campana curves on split toric varieties can be applied directly to the point counting argument over $\mathbb F_q$ (without a detailed explanation). However, his argument does not seem to involve the automorphic characters which we observe here.

\subsubsection*{$\mathbb A^1$-curves on split toric varieties}

We continue to work in the setting of a split smooth projective toric variety $\underline{X}$ defined over a finite field $\mathbf k = \mathbb F_q$. We fix a reduced boundary divisor
\[
D = \sum_{i \in \mathcal A} \Delta_i \leq \Delta.
\]
Let $X$ be the log scheme associated to $(\underline{X}, D)$.

\begin{defi}
A rational curve $f : \underline{\mathbb P}^1 \to \underline{X}$ with $\underline{T}\cap f(\underline{\mathbb P}^1) \neq \emptyset$ is an $\mathbb A^1$-curve if for any $i \in \mathcal A$, the support of $f^*\Delta_i$ is contained in $\{\infty\}$.
\end{defi}

Log Manin's conjecture for $\mathbb A^1$-curves concerns the asymptotic formula for the counting function of $\mathbb A^1$-curves of bounded degree $$\deg (-f^*K_X) = \deg (-f^*(K_{\underline{X}} + D)).$$
Regarding this conjecture we prove

\begin{theo}[Theorem~\ref{theo:A1curve}]
    \label{theo:A1curvesintro}
    Suppose that $(X, D)$ is geometrically separably $\mathbb A^1$-connected.
    Then an analogue of log Manin's conjecture for integral points on $(X, D)$ holds for $\mathbb A^1$-curves with respect to the log anticanonical class in the sense of \cite{Santens}.
\end{theo}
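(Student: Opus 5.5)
We use the Cox ring description of $\mathbb A^1$-curves and count by a Batyrev-type heuristic that can be made exact over $\mathbb F_q$. Let $\Delta_1,\dots,\Delta_N$ be the torus-invariant divisors, with Cox coordinates $x_1,\dots,x_N$. Write $G = \mathrm{Hom}(\Pic(\underline{X}),\mathbb G_m)$ for the Néron–Severi torus. For a class $\alpha \in \mathrm{Nef}_1(\underline{X})_{\mathbb Z}$, a morphism $f:\underline{\mathbb P}^1\to\underline{X}$ meeting $\underline{T}$ corresponds, up to the $G(\mathbf k)$-action, to a tuple of nonzero binary forms $(s_1,\dots,s_N)$ with $\deg s_i = \Delta_i\cdot\alpha$, satisfying the coprimality conditions imposed by the primitive collections. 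The $\mathbb A^1$-condition says that for $i\in\mathcal A$ the form $s_i$ is a scalar multiple of $y^{\Delta_i\cdot\alpha}$, where $y$ is the coordinate vanishing at $\infty$. The free data are therefore the forms $s_j$ for $j\notin\mathcal A$, together with one scalar for each $i\in\mathcal A$. The log anticanonical degree is $-(K_{\underline{X}}+D)\cdot\alpha=\sum_{j\notin\mathcal A}\Delta_j\cdot\alpha$.

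\textbf{Step 1: counting with a fixed class.} We first compute $N_\alpha$, the number of $\mathbb A^1$-curves of class $\alpha$, by Möbius inversion over the coprimality conditions, as in Bourqui's treatment of the non-log case. The gcd conditions are indexed by primitive collections $P$; the condition for $P$ says the $s_i$, $i\in P$, have no common zero. Two kinds of collections arise.
\begin{itemize}
\item If $P\cap\mathcal A=\emptyset$, the condition is imposed at every closed point of $\mathbb P^1$ and contributes an Euler product.
\item If $P$ meets $\mathcal A$, the forms $s_i$ with $i\in P\cap\mathcal A$ vanish only at $\infty$ (or are constants). The condition is then local at $\infty$ and reduces to a finite combinatorial constraint on the multiplicities at $\infty$.
\end{itemize}
When all degrees $\Delta_j\cdot\alpha$, $j\notin\mathcal A$, are large compared to the Möbius supports, the count becomes $q^{\sum_{j\notin\mathcal A}(\Delta_j\cdot\alpha+1)+|\mathcal A|-\dim G}$ times a convergent Euler product. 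This holds up to an error that is power-saving in the degree. The remaining local factor at $\infty$ depends on $\alpha$ only through congruence data. Concretely, it depends on the image of $\alpha$ under the pairing with the group of invertible-function characters and with $\Br$, which are the analytic-obstruction data.

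\textbf{Step 2: summing over classes.} We then sum $N_\alpha$ over all $\alpha$ with $\sum_{j\notin\mathcal A}\Delta_j\cdot\alpha = d$. The set of admissible $\alpha$ is the set of lattice points in a slice of the cone cut out by the $\mathbb A^1$-effectivity conditions. Here we use the hypothesis. By Proposition~\ref{prop:equivconditionstoric}, geometric separable $\mathbb A^1$-connectedness is equivalent to $\mathbf k[\underline{U}]^\times=\mathbf k^\times$. In Cox terms, the rays $\{\Delta_j\}_{j\notin\mathcal A}$ then span $N_{\mathbb R}$ positively, so the $\alpha$ with fixed log anticanonical degree form a bounded polytope. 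The lattice-point count in that polytope grows like $d^{b-1}$, where $b$ is the expected rank, namely the rank of $\Pic(\underline{U})$ as in \cite{Santens}. Without the hypothesis the slice would be unbounded, which is exactly the failure mode in Example~\ref{rema:counterexample}. Classes near the boundary of the cone, where Step 1's asymptotic fails, are bounded by the trivial estimate on the number of forms. This bound is what makes their total contribution $o(q^d d^{b-1})$.

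\textbf{Step 3: the leading constant.} Assembling these gives $q^d d^{b-1}$ times a constant. This constant is the volume of the polytope (the $\alpha$-constant), times the Euler product (the Tamagawa factor at finite places), times the average over residue classes of the local factor at $\infty$. We then identify it with the constant of \cite{Santens}. The averaging over residue classes is a finite character sum, which corresponds to the sum over obstruction characters as in Proposition~\ref{prop:obstruction}.

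The main obstacle is Step 2 at the boundary of the cone: obtaining uniform error terms for classes where some $\Delta_j\cdot\alpha$ is small, together with the bookkeeping at $\infty$. To handle these, I would first try a height-zeta-function argument, bounding the number of free summands directly.
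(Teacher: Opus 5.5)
Your proposal has a genuine gap in Steps 2--3. You never decompose the count according to the Clemens complex, and as a result both the exponent and the leading constant come out wrong.

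\textbf{The exponent.} The classes carrying $\mathbb A^1$-curves do not form one convex cone whose degree slices have dimension $\rk\Pic(\underline{U})-1$. By Proposition~\ref{prop:numericalclasswithA1}, $M^\circ_{\alpha,D}\neq\emptyset$ exactly when $\alpha$ lies in some face $\mathcal F_{A,Z}$, where $(A,Z)$ records the stratum $\underline{\Delta}_A^\circ$ containing $f(\infty)$. The union of these faces is in general not convex, and for $A$ without analytic obstruction $\dim\mathcal F_{A,Z}=\rk\Pic(\underline{X}^\circ)+\dim A$. So the power of $d$ must be $b-1$ with $b=\rk\Pic(\underline{X}^\circ)+\dim A$, as in Theorem~\ref{theo:A1curve} and \cite{Santens}.

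Already for $\underline{X}=\mathbb P^1$ with $D$ a single point, the $\mathbb A^1$-curves of log anticanonical degree $d$ are the $(q-1)q^d$ polynomials of degree $d$. The count up to degree $d$ is therefore $\asymp q^d$, whereas your $b=\rk\Pic(\mathbb A^1)=0$ would give $q^d d^{-1}$.

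Your justification is also false. By Proposition~\ref{prop:equivconditionstoric}(3), $\mathbb A^1$-connectedness only says the rays $v_j$, $j\notin\mathcal A$, span $N_{\mathbb Q}$ linearly. Positive spanning is equivalent to $\rH^0(\underline{X}^\circ,\mathcal O)=\mathbf k$, i.e.\ to the absence of an analytic obstruction at $(\emptyset,\underline{X})$, and it fails for $\mathbb A^1\subset\mathbb P^1$. Boundedness of the degree slices comes face by face from Proposition~\ref{prop:northcott}. Example~\ref{rema:counterexample} is non-toric and concerns a fibration onto a log Calabi--Yau base, not unbounded slices.

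\textbf{The constant.} The local factor at $\infty$ does not depend on congruence data, invertible functions or $\Br$. For $i\in\mathcal A$ the section $s_i=\lambda_i y^{r_i}$ enters the coprimality conditions at $\infty$ only through whether $r_i>0$. Hence that factor is determined by $A=\{i\in\mathcal A:\Delta_i\cdot\alpha>0\}$, i.e.\ by the face.

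For $\mathbf r$ with all $r_i$, $i\in(\Sigma^{(1)}\setminus\mathcal A)\cup A$, large, one gets $\#\underline{U}_{\mathbf r,D}(\mathbf k)\sim\frac{q^n}{(q-1)^n}\tau((X,D),A)\,q^{\sum_{i\notin\mathcal A}r_i}$ (Proposition~\ref{prop:countingA1}). Here $\tau((X,D),A)$ contains the integral over $\underline{\Delta}_A^\circ(\mathfrak o_\infty)$. There is no character sum: characters enter only in the Campana case, through the conditions $r_i\geq m_i$ interacting with $r_i \bmod m_i$.

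The analytic obstructions of Proposition~\ref{prop:obstruction} are not characters either. They are non-constant regular functions on $\underline{X}^\circ_Z$, equivalently positive Iitaka dimension of $\sum_{i\in\mathcal A\setminus A}D_i$. When present, $\dim\mathcal F_{A,Z}$ is no longer predicted by the Clemens complex, and the curves in that face lie in fibers of a toric rational map. This is why the paper states the result face by face, only for $A$ without analytic obstruction, running the shrunken-cone argument with $\mathcal F_{A,Z,\epsilon}$ inside $\mathcal F_{A,Z}$. A single count over all classes would also have to control obstructed faces, which the paper explicitly leaves open.

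Two parts of your plan are fine. Your M\"obius-inversion route to the per-class asymptotic is a legitimate alternative to the paper's method (an Euler product matched to a height integral via Denef's formula, plus the coefficient extraction of \cite{DLTT25}). Your trivial bound $\#\underline{U}_{\mathbf r,D}(\mathbf k)=O(q^{\sum_{i\notin\mathcal A}r_i})$ for classes near the boundary also works. What must change is the stratification by $(A,Z)$, together with the resulting exponent and constant.
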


See Section~\ref{sec:A1} for a detailed statement.

\subsubsection*{The method of proof}

Our method of the proofs for both Theorem~\ref{theo:Campanaintro} and \ref{theo:A1curvesintro} are based on the birational geometry of the moduli space of rational curves on a smooth projective toric variety combined with Batyrev's heuristic-type counting arguments proposed originally by Batyrev in \cite{Bat88} and subsequently developed in \cite{LRT23, DLTT25, LT26}. In particular, it relies on an extensive usage of the virtual height zeta function, i.e., a certain generating series of the number of $\mathbf k$-points on the moduli space, studied in \cite{DLTT25} for the space of rational curves on quartic del Pezzo surfaces, and its realization as certain height integrals developed in \cite{CLT10}. 

The birational geometry of the moduli space of rational curves on a toric variety is inspired by the study of the space of rational curves on del Pezzo surfaces in \cite{DLTT25}, and it is also viewed as applications of the Cox ring and the universal torsor description explored in \cite{Sal98, Bou, Bou16, Faisant}. In particular, we do not invoke any harmonic analysis on tori or the Poisson summation formula.

One remarkable feature of our proofs is a Batyrev's heuristic-type geometric explanation of the leading constant for Campana points, proposed in \cite{CLTBT}, expressing the constant as a sum of certain height integrals twisted by automorphic characters induced by Campana Brauer elements. This can be viewed as a convincing evidence towards the constants proposed by \cite{CLTBT}.

\subsection{History} \label{sect:history}

\subsubsection*{Manin's conjecture for toric varieties}

Let us briefly explain the history of Manin's conjecture for toric varieties.

Manin's conjecture for toric varieties over number fields has been proved by Victor Batyrev and Yuri Tschinkel in \cite{BT95, BT96, BT98, CLT01}. Over global function fields, this has been studied by Bourqui in \cite{Bou03, Bou11} and its motivic analogue has been developed in \cite{Bou09, BDH22}. These papers are based on harmonic analysis on tori. A different approach using universal torsors was initiated by Salberger in \cite{Sal98} and further explored in \cite{Pieropan16}.

Log Manin's conjecture for integral points on toric varieties has been pioneered in \cite{CLTtoric} and \cite{Wilsch}, and it has been finally proved in \cite{Santens} by applying harmonic analysis to universal torsors. Log Manin's conjecture for Campana/Darmon points has been pioneered in \cite{Streeter22, PS24} and finally proved in \cite{SS24} using harmonic analysis. Its generalizations to further semi-integral points such as $\mathcal M$-points have been explored in \cite{Moerman2} using the universal torsor approach. A motivic analogue for Campana curves on toric varieties has been established in \cite{Faisant25} and \cite{Faisant} using the universal torsor approach. Our approach is indirectly related to the methods used by Faisant, but our method is more direct.  In particular, we do not invoke inclusion-exclusion using the M\"obius function.  A key advantage of our approach is that it allows us to identify a ``virtual'' height zeta function with the height integrals studied in \cite{CLT10} for split toric varieties.

\subsubsection*{Campana curves and $\mathbb A^1$-curves}

The geometry of $\mathbb A^1$-curves has been pioneered in \cite{KM99}, and further investigated by the first author and Yi Zhu, e.g., \cite{CZ15, CZ17, CZ18, CC21} using the moduli stack of stable log maps. The geometry of Campana rational curves has been first explored by Campana himself in \cite{Campana04, Campana07, Campana10, Campana11}.  Recently \cite{CLT25} set the foundation for the study of Campana curves using the geometry of stable log maps. This has been further investigated in \cite{Enhao}.

\bigskip

\noindent
{\bf Acknowledgements:}
The authors would like to thank Dan Loughran for his help regarding Campana Brauer groups. The authors would also like to thank Takehiko Yasuda for answering our questions regarding toric singularities.

Qile Chen is supported in part by Simons Travel Support for Mathematicians.
Brian Lehmann was supported by Simons Foundation grant Award Number 851129.  Sho Tanimoto was partially supported by JST FOREST program Grant number JPMJFR212Z and by JSPS KAKENHI Grant-in-Aid (B) 23K25764.

\section{Preliminaries}

\noindent
{\bf Notation}: 
We freely use the notation established in \cite{CLT25}. In particular, 
usual schemes are denoted by $\underline{X}, \underline{Y}$, and so on, and log schemes are denoted by $X, Y$, and so on. The ground field is denoted by $\mathbf k$, and a variety defined over $\mathbf k$ is a separated integral scheme of finite type over $\mathbf k$. We denote the log scheme associated to a pair $(\underline{X}, \Delta)$ by $(X, \Delta)$ or $X$ if there is no confusion. For a $\mathbf k$-variety $\underline{X}$ and an extension $\mathbf k'/\mathbf k$, we denote its base change by $\underline{X}_{\mathbf k'}$. We also fix the separable closure $\mathbf k^s$ of $\mathbf k$.

Let $\underline{X}$ be a projective variety over $\mathbf k$. We denote the space of numerical real $1$-cycles by $N_1(\underline{X})$ and denote the lattice generated by integral classes by $N_1(\underline{X})_{\mathbb Z} \subset N_1(\underline{X})$. We denote the nef cone of curves by $\mathrm{Nef}_1(\underline{X}) \subset N_1(\underline{X})$ and denote its interior by $\mathrm{Nef}_1(\underline{X})^{\circ}$.
For any cone $\mathsf C \subset N_1(\underline{X})$, we denote $\mathsf C \cap N_1(\underline{X})_{\mathbb Z}$ by $\mathsf C_{\mathbb Z}$.
We denote the dimension of $N_1(\underline{X})$ by $\rho(\underline{X})$.

\subsection{Preliminaries on toric varieties}

\subsubsection*{Toric pairs}

Let $\mathbf k$ be a field.
A $\mathbf k$-torus is an algebraic group $\underline{T}$ defined over $\mathbf k$ such that $T_{\mathbf k^s}$ is isomorphic to $\underline{\mathbb G}_m^n$. A toric variety is a variety $\underline{X}$ with an action by a $\mathbf k$-torus $\underline{T}$ with the unique open orbit isomorphic to $\underline{T}$. 
  We let $\Delta = \sum_i \Delta_i$ denote the full reduced toric boundary divisor for $\underline{X}$. When $X$ is split, i.e., every boundary component is geometrically irreducible and $\underline{T}$ is isomorphic to $\underline{\mathbb G}_m^n$, we denote the fan associated to $\underline{X}$ by $\Sigma$. The set of $1$-dimensional cones is denoted by $\Sigma^{(1)}$ which corresponds to the set of boundary components $\{\Delta_i\}$. The ray generator corresponding to $\Delta_i$ is denoted by $v_i$.
We also have the N\'eron-Severi torus $\underline{T}_{\mathrm{NS}} = \mathrm{Hom(\mathrm{NS}(\underline{X}), \underline{\mathbb G}_m)}$ which is a subtorus of $\underline{\mathbb G}_m^{\Sigma^{(1)}}$ and we have a natural identification
\[
\underline{\mathbb G}_m^{\Sigma^{(1)}}/\underline{T}_{\mathrm{NS}} \cong \underline{T}.
\]

First we recall the following definition:

\begin{defi}
A pair $(\underline{X},D)$ is a toric pair if $\underline{X}$ is a toric variety defined over $\mathbf k$ and $D \subset \underline{X}$ denotes a reduced sum of $\underline{T}$-invariant divisors. Note that $D$ may not be the full boundary divisor. The log scheme associated to $(\underline{X}, D)$ is denoted by $(X, D)$ or by $X$ when the boundary is understood in context.

We say that a toric pair is smooth if $\underline{X}$ is smooth and the full boundary $\Delta$ has SNC support.  
\end{defi}

\subsubsection*{Heights and the Tamagawa measures}

We fix notation from number theory:

\noindent
{\bf Notation}: Let $\mathbf k = \mathbb F_q$ be a finite field and let $\underline{C}$ be a smooth geometrically integral projective curve defined over $\mathbf k$. Let $F = \mathbf k(\underline{C})$ be the global function field of $\underline{C}$. We denote the set of closed points on $\underline{C}$ by $|\underline{C}|$. For each $c \in |\underline{C}|$, we denote the completion of $F$ with respect to the discrete valuation $v_c$ associated to $c$ by $F_c$.
Let $\mathfrak o_c$ be the ring of integers for $F_c$. Let $\mathbf k_c$ be the residue field of $\mathfrak o_c$ and $q_c$ be the size of $\mathbf k_c$. We also denote $[\mathbf k_c: \mathbf k]$ by $|c|$ so that we have $q_c = q^{|c|}$.
For any $f \in F_c$, we define the norm $|\cdot|_c$ by
\[
|f|_c = q_c^{-v_c(f)}.
\]
This satisfies the product formula: for any $f \in F$,
\begin{equation}
\label{equation:productformula}
    \prod_{c \in |\underline{C}|}|f|_c = 1.
\end{equation}
We also denote the adelic ring of $F$ by $\mathbb A_F$.

Finally let us define the Hasse-Weil zeta function for $\underline{\mathbb P}^1$:
    \[
    \zeta_{\underline{\mathbb P}^1}(t) = \prod_{c \in |\underline{\mathbb P}^1|}(1-q_c^{-t})^{-1} = \frac{1}{(1-q^{-t})(1-q^{-(t-1)})}.
    \]

Let $\underline{X}$ be a smooth projective toric variety defined over $\mathbf k$ of dimension $n$ with the open orbit $\underline{T}$ such that the full boundary divisor $\Delta = \sum_i \Delta_i$ is a SNC divisor. For simplicity we also assume that $\underline{X}$ is split.
We recall the height theory of $\underline{X}$ following the exposition of \cite{CLT10}.
First we recall the definition of adelic metrics induced by the trivial family $\underline{X}\times \underline{\mathbb P}^1$:

\begin{defi}[{\cite[Section 2.1.5]{CLT10}}]
    For each $c \in |\underline{\mathbb P}^1|$, the metric $\| \cdot \|_c$ induced on $\mathcal L_i = \mathcal O(\Delta_i)$ is defined by the following property: for any $x \in \underline{X}(F_c)$ and $\ell_x \in \mathcal L_{i, x}(F_c)$, we have
    \[
    \|\ell_x\|_c \leq 1 \iff \ell_x \in \mathcal L_{i, x}(\mathfrak o_c).
    \]
    We call the collection $(\|\cdot \|_c)_{c \in |\underline{\mathbb P}^1|}$ the adelic metric.
\end{defi}

Using metrics, one can define height functions:
\begin{defi}[{\cite[Section 2.3]{CLT10}}]
Let $\Sigma^{(1)}$ be the set of boundary divisors.  For each $i \in \Sigma^{(1)}$ let $\mathsf s_i \in H^0(\underline{X}, \mathcal O(\Delta_i))$ be a section corresponding to $\Delta_i$. We assign a complex number $t_{i}$ to each $i \in \Sigma^{(1)}$ and set $\bold t = (t_i)$.

For $c \in |\underline{\mathbb P}^1|$, we define the local height function $$\mathsf H_c : \mathbb C^{\Sigma^{(1)}} \times \underline{T}(F_c) \to \mathbb C^\times,$$ by
\[
\mathsf H_c(\bold t, g_c) = \prod_i \|\mathsf s_i(g_c)\|_c^{-t_i}.
\]
We define the global height function $$\mathsf H : \mathbb C^{\Sigma^{(1)}} \times \underline{T}(\mathbb A_F) \to \mathbb C^\times,$$ by
\[
\mathsf H(\bold t, (g_c)) = \prod_{c \in |\underline{\mathbb P}^1|} \mathsf H_c(\bold t, g_c).
\]
When $L = \sum_{i \in \Sigma^{(1)}} \lambda_i \Delta_i$, 
we write $\mathsf H((\lambda_i), (g_c))$ as $\mathsf H(L, (g_c))$ and so on.
\end{defi}

Next we define the local Tamagawa measures:

\begin{defi}[{\cite[Section 2.1.10]{CLT10}}]
    Let $\omega$ be a non-vanishing top degree $\underline{T}$-invariant form on $\underline{T}$, which is unique up to scaling.
    Then we consider the corresponding divisor
    \[
    -\mathrm{div}(\omega) = \sum_i \Delta_i.
    \]
    For $c \in |\underline{\mathbb P}^1|$, we define the norm function $\|\omega\|_c$ as
    \[
    \|\omega\|_c = \prod_i \|\mathsf s_i\|^{-1}_c.
    \]
    The top form $\omega$ defines a Haar measure $|\omega |$ on $\underline{T}(F_c)$ 
    and we define the local Tamagawa measure on $\underline{X}(F_c)$ by
    \[
    \tau_c = \frac{|\omega |}{\|\omega\|_c}.
    \]
    \end{defi}
    Let $D \leq \Delta$ be a reduced effective sum of $T$-invariant divisors and set $\underline{X}^\circ = \underline{X} \setminus \mathrm{Supp}(D)$.
    Let $\mathrm{EP}(\underline{X}^\circ)$ be the following virtual $\mathbb Q$-Galois module:
    \[
    [H^0(\underline{X}^\circ_{F^{s}}, \mathbb G_m)/(F^{s})^\times]_{\mathbb Q} - [H^1(\underline{X}^\circ_{F^{s}}, \mathbb G_m)]_{\mathbb Q}, 
    \]
    as in \cite[Definition 2.2]{CLT10}.
    Using this, one can define the Artin $L$-function as in \cite[After Definition 2.2]{CLT10}:
    \[
    L(t, \mathrm{EP}(\underline{X}^\circ)) = \prod_{c \in |\underline{\mathbb P}^1|} L_c(t, \mathrm{EP}(\underline{X}^\circ)).
    \]
    Here \[L_c(t, \mathrm{EP}(\underline{X}^\circ)) =\det(1-q_{c}^{-t} \mathrm{Fr}_{c} \, | \, \mathrm{EP}(\underline{X}^{\circ})^{\Gamma^{0}_{c}})^{-1} \] where $\mathrm{Fr}_{c}$ is the geometric Frobenius and $\Gamma^{0}_{c}$ is an inertia subgroup at $c$.
    The function $L(t, \mathrm{EP}(\underline{X}^\circ))$ is $(2\pi\sqrt{-1}/\log q)$-periodic and admits a holomorphic continuation with possible poles at $t = 1 + k\frac{2\pi\sqrt{-1}}{\log q}$ for $k \in \mathbb Z$. Let $b = \mathrm{ord}_{t = 1}L(t, \mathrm{EP}(\underline{X}^\circ))$ and define
    \[
    L_*(1, \mathrm{EP}(\underline{X}^\circ)) = \lim_{t \to 1}(1-q^{-(t-1)})^{-b}L(t, \mathrm{EP}(\underline{X}^\circ)),
    \]
    as in \cite[p. 368]{CLT10}.
    With these definitions, we introduce the following:
    \begin{defi}[{\cite[Definition 2.8]{CLT10}}]
        The Tamagawa measure $\tau_{X^\circ}$ on $X^\circ(\mathbb A_F)$
        is 
        \[
        \tau_{\underline{X}^\circ} := L_*(1, \mathrm{EP}(\underline{X}^\circ))^{-1}\prod_{c\in |\underline{\mathbb P}^1|} L_c(1, \mathrm{EP}(\underline{X}^\circ))\tau_c.
        \]
    \end{defi}

\subsection{$\mathbb A^1$-connectedness}

\subsubsection*{$\mathbb A^1$-curves}\label{sss:A1-curve}
Denote by $\underline{\mathbb{A}}^1$ and $\underline{\mathbb{P}}^1$ the  affine line and projective line over $\mathbf k$ respectively. 
An $\mathbb A^1$-curve on a $\mathbf k$-variety $\underline{U}$ is a non-constant proper morphism $\underline{f}$ to $\underline{U}$ from either $\underline{\mathbb A}^1$ or $\underline{\mathbb P}^1$.

Consider a log scheme $X$ given by the pair $ (\underline{X},\Delta)$ where $\underline{X}$ is a proper $\mathbf k$-variety, and $\Delta \subset \underline{X}$ is an SNC divisor defined over $\mathbf k$. 
A rational curve $\underline{f} \colon \underline{\mathbb{P}}^1 \to \underline X$ is called an $\mathbb A^1$-curve if $\underline{f}^*\Delta$ is either empty, or is supported at a single  $\mathbf k$-point of $\underline{\mathbb{P}}^1$, denoted by $\infty \in \underline{\mathbb{P}}^1$. 
In the latter case, let $\mathbb P^1$ be the log scheme given by the pair $(\underline{\mathbb{P}}^1, \infty)$. Then $\underline{f}$ lifts to a unique log map $f \colon \mathbb P^1 \to X$.

Denote by $\underline{U} = \underline{X}\setminus \Delta$. It is clear that each $\mathbb A^1$-curve on $X$ corresponds to a unique $\mathbb A^1$-curve of $\underline{U}$. 
Conversely, given $X$ as a compactification of $\underline{U}$, the properness of $\underline{X}$ implies that each $\mathbb A^1$-curve on $\underline{U}$ extends to a unique $\mathbb A^1$-curve on $X$. 
Note that when $\underline{f} \colon \underline{\mathbb{P}}^1 \to \underline{X}$ factors through $\underline{U}$, we view $\underline{f}$ as a unique $\mathbb A^1$-curve by fixing the choice of $\infty$. 

\subsubsection*{Free $\mathbb A^1$-curves}
Now assume that $\mathbf k$ is algebraically closed. We say that $\underline{U}$ is (separably) $\mathbb A^1$-uniruled if there is a family of $\mathbb A^1$-curves $\underline{f}_{\underline T} \colon \underline{\mathbb{A}}^1 \times \underline{T} \to \underline{U}$ over $\underline{T}$, such that $\underline{f}_{\underline T}$ is (separable) dominant. 
If furthermore, the two-evaluation morphism
\[
\underline{f}_{\underline T}\times_{\underline{T}} \underline{f}_{\underline T} \colon \underline{\mathbb{A}}^1 \times \underline{T} \times \underline{\mathbb{A}}^1 \to \underline{U}
\]
is (separable) dominant, we say that $\underline{U}$ is (separably) $\mathbb A^1$-connected. 

These two properties can be checked using a log compactification $X$ of $\underline{U}$ as follows. 
Denote by $T_X$ the log tangent bundle of $X$. 
An $\mathbb A^1$-curve $\underline{f} \colon \underline{\mathbb{P}}^1 \to \underline{X}$ is free (resp. very free) if $\underline{f}^*T_X$ is semi-ample (resp. ample). 
It is proven in \cite{CZ17} that $\underline{U}$ is separably $\mathbb A^1$-uniruled (resp. separably $\mathbb A^1$-connected) if and only if $X$ admits a free (resp. very free) $\mathbb A^1$-curve. 

Note that when $\underline{f}^*\Delta = \emptyset$, the above statements reduces to the equivalence between separable uniruledness (resp. separably rational connectedness) and existence of free (very free) rational curves on $\underline{X}$.

\subsubsection*{Contact orders}
We first assume that $\mathbf k$ is algebraically closed. 
Consider an $\mathbb A^1$-curve $f \colon \mathbb P^1 \to X$. Its contact order is defined to be its intersection numbers at $\infty$ with each irreducible component of the SNC divisor $\Delta$. More precisely, let $\Delta = \sum_i \Delta_i$ be the decomposition to irreducible components. 
Since $f^*\Delta$ is only allowed to be supported at $\infty$, the contact order of $f$ is given by the sequence of non-negative integers $(f_*[\mathbb P^1]\cap \Delta_i)_i$. 
Contact orders are invariant along deformations of $\mathbb A^1$-curves. 

Now consider a general (not necessarily closed) ground field $\mathbf k$. Assume that each irreducible component $\Delta_i$ is defined over $\mathbf k$. Given the deformation invariance of contact orders, we define the contact order of an $\mathbb A^1$-curve to be again the sequence of intersection numbers  $(f_*[\mathbb P^1]\cap \Delta_i)_i$.

\subsubsection*{Contact orders in the toric case}
In the toric case, the data of contact orders can be nicely encoded in the combinatorial structure of fans. 
Let $\underline{X}$ be a split toric variety with its unique open orbit given by a $\mathbf k$-torus $\underline{T}$, and with  its full toric boundary $\Delta = \sum_i \Delta_i$ as before.  
Denote by $\Sigma$ the fan of $\underline{X}$.
For the purpose of this paper, we assume that $\underline{X}$ is smooth. 
For a choice of torus-invariant divisor $D \subset \Delta$, let $X$ be the log scheme corresponding to the toric pair $(\underline{X}, \Delta)$. 

We first consider the case that $D = \Delta$ is the full toric boundary. 
While $X$ does not admit any $\mathbb A^1$-curves, we may still define contact orders as local intersection numbers with respect to each irreducible component of $\Delta$. 
Then the set of contact orders can be identified with integral points in $|\Sigma|$. 
More precisely, let $c \in |\Sigma|$ be an integral point, and $\sigma \in \Sigma$ be the minimal cone containing $v$. Then the smoothness of $\sigma$ implies a unique decomposition  
\begin{equation}\label{eq:contact=integral-point}
c = \sum_{\rho \in \sigma^{(1)}} c_{\rho} u_{\rho},
\end{equation}
where $\sigma^{(1)}$ is the set of rays of $\sigma$, and $u_{\rho}$ is the ray generator of $\rho \in \sigma^{(1)}$. In this case $c_{\rho}$ is the prescribed local intersection number with the component of $\Delta$ corresponding to $\rho$. 

For a general $D \subset \Delta$, we define $\Sigma_X \subset \Sigma$ as the subset of cones spanned by rays given by irreducible components of $D$. Then the set of contact orders to $X$ is precisely integral points in $|\Sigma_X|$. 
Indeed, for any integral point $c \in |\Sigma_X|$, the minimal cone $\sigma \in \Sigma$ is contained in $\Sigma_X$ by construction. Thus, the integral point $c$ specifies the local intersection numbers with respect to each irreducible component of $D$ via the same decomposition \eqref{eq:contact=integral-point} as before. 

The fan $\Sigma_X$ is closely related to the Clemens complex of $X$ as in Definition \ref{defi:clemens-complex}.

\section{Integral points and $\mathbb{A}^{1}$-connectedness}

As discussed in the introduction, there have been several recent attempts to give a rigorous formulation of Manin's conjecture for integral points (\cite{Wilsch, Santens}).  In this section we discuss the geometry underlying these formulations, focusing on the following principle:

\begin{prin}
Let $(X,\Delta)$ be a log Fano pair over a global field.  The integral points should admit a good asymptotic description when $(X,\Delta)$ is $\mathbb{A}^{1}$-connected.
\end{prin}

\subsection{Toric pairs}

For a smooth toric pair, there are many equivalent ways of identifying the $\mathbb{A}^{1}$-connectedness condition.

\begin{prop} \label{prop:equivconditionstoric}
Assume that the ground field $\mathbf k$ is algebraically closed.
Let $(X_{\Sigma}, D)$ be a smooth toric pair associated to a fan $\Sigma \subset N_{\mathbb R}$.  Let $U = X_{\Sigma} \backslash \mathrm{Supp}(D)$.  
Then the following conditions are equivalent:
\begin{enumerate}
\item $(X_{\Sigma}, D)$ is separably $\mathbb{A}^{1}$-connected. 
\item $\mathbf k[U]^{\times} \cong \mathbf k^{\times}$.
\item The rays of $\Sigma$ corresponding to divisors not contained in $D$ span $N_{\mathbb{Q}}$.
\item The irreducible components $D_{i}$ of $D$ are linearly independent in $\Pic(X_{\Sigma})_{\mathbb{Q}}$.
\item The log tangent bundle $T_{X_{\Sigma}}$ does not admit any non-zero map to $\mathcal{O}_{X_{\Sigma}}$.
\end{enumerate}
\end{prop}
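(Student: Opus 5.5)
The plan is to prove (2)$\Leftrightarrow$(3)$\Leftrightarrow$(4) by lattice combinatorics, to prove (5)$\Leftrightarrow$(3) by a residue computation, and to close the loop with (3)$\Rightarrow$(1) and (1)$\Rightarrow$(2).

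\emph{The combinatorial equivalences.} Write $M$ for the character lattice dual to $N$, and for each ray $\rho_i$ let $v_i$ be its generator. Every invertible function on the torus is of the form $\lambda\chi^m$ with $\lambda\in\mathbf k^\times$ and $m\in M$. On $X_\Sigma$ its divisor is $\mathrm{div}(\chi^m)=\sum_i\langle m,v_i\rangle\Delta_i$. Hence $\chi^m$ is a unit on $U$ exactly when $\langle m,v_i\rangle=0$ for every ray whose divisor is not contained in $D$. A nonzero such $m$ exists iff these rays fail to span $N_{\mathbb Q}$, which gives (2)$\Leftrightarrow$(3).

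For (3)$\Leftrightarrow$(4), use the exact sequence
\[
0\to M\to\mathbb Z^{\Sigma^{(1)}}\to\Pic(X_\Sigma)\to 0,
\]
which is exact on the left because $\Sigma$ is complete. A relation $\sum_{i\in D}a_iD_i=0$ in $\Pic(X_\Sigma)_{\mathbb Q}$ is the same as some $m\in M_{\mathbb Q}$ with $\langle m,v_i\rangle=a_i$ for $i\in D$ and $\langle m,v_i\rangle=0$ otherwise. Such an $m$ is nonzero iff the $a_i$ are not all zero, again by completeness. So a nontrivial relation exists iff the non-$D$ rays fail to span $N_{\mathbb Q}$.

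\emph{The residue computation for (5).} I will use that $\Omega_{X_\Sigma}(\log\Delta)\cong M\otimes\mathcal O$, generated by the forms $d\chi^m/\chi^m$. There is a residue sequence
\[
0\to\Omega_{X_\Sigma}(\log D)\to M\otimes\mathcal O\to\bigoplus_{i\notin D}\mathcal O_{\Delta_i}\to 0,
\]
in which the residue of $d\chi^m/\chi^m$ along $\Delta_i$ is $\langle m,v_i\rangle$. Since $\mathrm{Hom}(T_{X_\Sigma},\mathcal O)=H^0(\Omega_{X_\Sigma}(\log D))$, taking global sections identifies this space with the kernel of $M\otimes\mathbf k\to\mathbf k^{\{i\notin D\}}$.

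In characteristic $0$ this kernel vanishes iff (3) holds. In characteristic $p$ the kernel detects spanning of $N\otimes\mathbf F_p$ rather than of $N_{\mathbb Q}$. Here I would need to check whether smoothness of $\Sigma$ forces the two conditions to agree. If it does not, this discrepancy between $\mathbb Q$-spanning and $\mathbf F_p$-spanning is exactly where separability enters, and it is the delicate point of the argument.

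\emph{Closing the loop.} The implication (1)$\Rightarrow$(2) is Lemma~\ref{lemm:A1connectedimpliesnoinvertible}. Alternatively, (1)$\Rightarrow$(5) follows directly: very free $\mathbb A^1$-curves pass through a general point, and $f^*T_{X_\Sigma}$ is ample on such a curve $f$, so any map $T_{X_\Sigma}\to\mathcal O$ vanishes on a dense set and hence is zero.

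For (3)$\Rightarrow$(1), I will construct a very free $\mathbb A^1$-curve explicitly in Cox coordinates; by \cite{CZ17} this suffices. Since $\Sigma$ is complete, there is a relation $\sum_i d_iv_i=0$ with every $d_i>0$, and scaling lets us take all $d_i$ large. Set $x_i=s^{d_i}$ for $i\in D$, so that these coordinates vanish only at $\infty$. For $i\notin D$, take $x_i$ to be general binary forms of degree $d_i$ with distinct roots in $\mathbb A^1$. This gives an $\mathbb A^1$-curve $f$ with $f\cdot\Delta_i=d_i$.

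Next, $f^*T_{X_\Sigma}$ contains the trivial bundle $f^*T_{X_\Sigma}(-\log\Delta)=N\otimes\mathcal O$. The quotient is torsion, with a length-one elementary modification in direction $v_i$ at each of the $d_i$ roots of $x_i$, for $i\notin D$. A quotient $f^*T_{X_\Sigma}\to\mathcal O(a)$ with $a\le 0$ restricts to a constant functional $m$ on $N\otimes\mathcal O$. Because the roots are general and numerous, it must kill every $v_i$ with $i\notin D$, and by (3) that forces $m=0$. Hence all splitting degrees are positive, i.e.\ $f$ is very free.

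The main obstacle is making this last degree argument rigorous: one must control the splitting type after many elementary modifications, for instance by showing that every rank-one quotient of degree $\le 0$ factors through a constant functional. In positive characteristic the same $\mathbb Q$-versus-$\mathbf F_p$ spanning issue as in the residue computation reappears, and this is where I expect to need care with separability.
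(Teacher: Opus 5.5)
Your equivalences (2)$\Leftrightarrow$(3)$\Leftrightarrow$(4), your residue computation for (5), and (1)$\Rightarrow$(2)/(5) are correct and essentially match the paper, which uses the same residue sequence. For (3)$\Rightarrow$(1) you take a different route. The paper pushes $\mathbb A^1$-connectedness forward along the toric map $\mathbb A^n\dashrightarrow U$ given by $n$ spanning non-$D$ rays; you build a very free $\mathbb A^1$-curve in Cox coordinates. As written, your construction has a genuine error, and your characteristic-$p$ worry is not a technicality.

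\textbf{The contact at $\infty$.} You take every $d_i>0$ and $x_i=s^{d_i}$ for all $i\in D$, so every $D$-coordinate vanishes at $\infty$. By Lemma~\ref{lemm:Ucharacterization} this tuple defines a morphism only if $\bigcap_{i\in D}\Delta_i\neq\emptyset$. That fails, for example, for $X=\mathrm{Bl}_p\mathbb P^2$ with $D=E+L$, where $L$ is an invariant line avoiding $p$: here $U=\mathbb A^2\setminus\{0\}$ satisfies (3), yet no $\mathbb A^1$-curve meets both $E$ and $L$.

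The fix is to use a single cone. By completeness, write $-\sum_{i\notin D}v_i=\sum_{j\in\sigma^{(1)}}c_jv_j$ with $c_j\ge 0$ for some $\sigma\in\Sigma$. Read off the degrees from the relation $\sum_{i\notin D}v_i+\sum_{j\in\sigma^{(1)}}c_jv_j=0$ after clearing denominators, so every non-$D$ ray gets positive degree. Only the $D$-rays of $\sigma$ then vanish at $\infty$, and $d_j=0$ for every other $j\in D$.

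Your ``main obstacle'' is then easy. $E=f^*T_{X_\Sigma}$ contains $N\otimes\mathcal O$ with torsion cokernel, so $E^\vee\subset M\otimes\mathcal O$. Hence $H^0(E^\vee)$ is the space of constants $m\in M\otimes\mathbf k$ with $\langle m,v_i\rangle=0$ for all $i\notin D$, and $E$ is ample iff this space vanishes. No genericity of the roots is needed.

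\textbf{Characteristic $p$.} Smoothness does not make $\mathbb Q$-spanning and $\mathbf F_p$-spanning agree, and the statement is in fact false in characteristic $p$. Take the Hirzebruch surface $F_p$, with rays $(1,0),(0,1),(-1,p),(0,-1)$, and let $D$ be the two invariant sections, i.e.\ the rays $(0,\pm1)$. The non-$D$ rays span $N_{\mathbb Q}$, so (2), (3), (4) hold. But in characteristic $p$ the form $d\chi^{(0,1)}/\chi^{(0,1)}$ has residues $0$ and $p=0$ along the non-$D$ divisors. It therefore gives a nonzero map $T_{X_\Sigma}\to\mathcal O$, so (5) fails, and hence (1) fails by your own (1)$\Rightarrow$(5) argument.

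The paper's proof tacitly uses characteristic $0$ in the same two places. One is the residue step. The other is the map $\mathbb A^n\dashrightarrow U$, which here is $(s,t)\mapsto(s/t,t^p)$ and is inseparable.

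So this gap cannot be closed for the statement as written. With the fix above, your argument proves (2)$\Leftrightarrow$(3)$\Leftrightarrow$(4) in every characteristic. It also proves (1)$\Leftrightarrow$(5)$\Leftrightarrow$ ``the non-$D$ rays span $N\otimes\mathbf k$'' in every characteristic. The two groups coincide in characteristic $0$, and more generally whenever the non-$D$ rays generate a sublattice of index prime to $p$. Your route makes this visible, because very-freeness reduces to exactly the linear algebra of (5).
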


\begin{proof}
Note that $U$ is the toric variety defined by the fan $\Sigma_{D}$ which is obtained by removing every cone in $\Sigma$ which contains a ray corresponding to an irreducible component of $D$. In other words, $\Sigma_{D}$ consists of the cones in $\Sigma$ which are spanned by the rays corresponding to divisors not contained in $D$. 

(1) $\Rightarrow$ (2): Suppose that $\mathbf k[U]^{\times}$ has a non-constant function.  This induces a morphism $f: U \to \mathbb{G}_{m}$.  In particular, every $\mathbb{A}^{1}$-curve carried by $U$ must be contracted by $f$, showing that $U$ is not $\mathbb{A}^{1}$-connected.

(2) $\Rightarrow$ (3):  Suppose that the rays not contained in $D$ are contained in a linear subspace $L \subset N$, or in other words, every cone in $\Sigma_{D}$ is contained in $L$.  Then projection perpendicular to $L$ defines a dominant morphism $U \to \mathbb{G}_{m}$.  In particular, $\mathbf k[U]^{\times}$ has a non-trivial function.

(3) $\Rightarrow$ (1): Choose a spanning subset $\{ v_{1},\ldots,v_{n}\}$ of the rays of $\Sigma$ corresponding to divisors not contained in $D$. Let $\Gamma$ be the cone that they span in $N$. The corresponding rational map of fans $\Gamma \to \Sigma_D$ induces a morphism $\phi: \mathbb{A}^{n} \rightarrow U$.  Let $V \to \mathbb{A}^{n}$ be a birational morphism such that there is a morphism $\phi': V \to U$  resolving $\phi$.  Since $\mathbb{A}^{n}$ is separably $\mathbb{A}^{1}$-connected, so is $V$, and thus $U$ is as well.

(2) $\Leftrightarrow$ (4): A non-trivial relation $\sum a_{i}D_{i} \sim 0$ yields an equality $\sum a_{i}D_{i} = \mathrm{div}(f)$ and thus an $f \in \mathbf{k}[U]^{\times} \backslash \mathbf{k}^{\times}$, and conversely.

(3) $\Leftrightarrow$ (5): We have an exact sequence
\begin{equation*}
0 \to \Omega_{X_{\Sigma}, D} \to M \otimes \mathcal{O}_{X_{\Sigma}} \xrightarrow{\psi} \oplus_{D_{i} \not \subset D} \mathcal{O}_{D_{i}} \to 0
\end{equation*}
where the $i$th factor of $\psi$ is the composition of $\langle -,v_{i} \rangle: M \otimes \mathcal{O}_{X_{\Sigma}} \to \mathbb{Z} \otimes \mathcal{O}_{X_{\Sigma}}$ 
and the quotient map $\mathcal{O}_{X_{\Sigma}} \to \mathcal{O}_{D_{i}}$.
Note that the vectors $\{ v_{i} \}_{D_{i} \not \subset D}$ fail to span $N_{\mathbb{Q}}$ if and only if there is a map $\mathcal{O}_{X_{\Sigma}} \to M \otimes \mathcal{O}_{X_{\Sigma}}$ whose composition with $\psi$ is zero.   
This latter condition is equivalent to saying that there is a trivial subbundle of $\Omega_{X_{\Sigma}}$, or equivalently, a trivial quotient of $T_{X_{\Sigma}}$.
\end{proof}

When $(X_{\Sigma},D)$ is a toric pair that is not $\mathbb{A}^{1}$-connected, the argument of Proposition \ref{prop:equivconditionstoric} shows that there is a rational toric map $\pi: X_{\Sigma} \dashrightarrow Y_{\Sigma'}$ to a log Calabi-Yau pair that contracts every $\mathbb{A}^{1}$-curve.  In this case, we should not expect the growth rate for integral points to have the same form as for an $\mathbb{A}^{1}$-connected variety (see \cite{Wilsch}).  As in \cite{Santens}, it makes sense to first formulate Manin's conjecture for $\mathbb{A}^{1}$-connected toric pairs before trying to extend to the log Calabi-Yau setting.

\subsection{Arbitrary pairs}
We next discuss possible generalizations of Proposition \ref{prop:equivconditionstoric} to arbitrary SNC pairs $(X,\Delta)$.  The implication (1) $\Rightarrow$ (2) of Proposition \ref{prop:equivconditionstoric} holds for arbitrary pairs using exactly the same argument.

\begin{lemm}
\label{lemm:A1connectedimpliesnoinvertible}
Let $(X,\Delta)$ be a SNC pair with reduced boundary.  If $(X,\Delta)$ is geometrically $\mathbb{A}^{1}$-connected, then $\mathbf k[U]^{\times} = \mathbf k^{\times}$ where $U = X\setminus \mathrm{Supp}(\Delta)$.
\end{lemm}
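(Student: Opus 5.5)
We argue by contraposition, following the proof of (1) $\Rightarrow$ (2) in Proposition~\ref{prop:equivconditionstoric}. First we pass to the separable (or algebraic) closure. The hypothesis is geometric, and a non-constant unit on $U$ stays non-constant after base change, since $\mathbf k[U]\otimes \overline{\mathbf k} = \overline{\mathbf k}[U_{\overline{\mathbf k}}]$. So it suffices to prove: if $\mathbf k$ is algebraically closed and $g\in \mathbf k[U]^\times\setminus \mathbf k^\times$, then $U$ is not $\mathbb A^1$-connected. (If $\mathbf k$ is not algebraically closed, we also note that $\mathbf k[U]^\times/\mathbf k^\times$ injects into $\overline{\mathbf k}[U]^\times/\overline{\mathbf k}^\times$. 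This holds because $U$ is geometrically integral, so $\mathbf k$ is algebraically closed in $\mathbf k[U]$.)

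Now fix such a $g$ and view it as a morphism $g\colon U\to \underline{\mathbb G}_m$. The key point is that every $\mathbb A^1$-curve $\underline f\colon \underline{\mathbb A}^1\to U$ (or $\underline{\mathbb P}^1\to U$) is contracted by $g$. Indeed, $g\circ \underline f$ is an invertible regular function on $\underline{\mathbb A}^1$ (resp. $\underline{\mathbb P}^1$). Since $\mathbf k[t]^\times=\mathbf k^\times$, this function is constant.

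Next we apply this to a family $\underline f_{\underline T}\colon \underline{\mathbb A}^1\times \underline T\to U$. For each point $t\in\underline T$, the function $g\circ\underline f_{\underline T}$ is constant on $\underline{\mathbb A}^1\times\{t\}$. Hence the two-point evaluation map
\[
\underline{\mathbb A}^1\times\underline T\times\underline{\mathbb A}^1\to U\times U
\]
has image contained in the closed subset $\{(x,y): g(x)=g(y)\}\subset U\times U$. This subset is a proper closed subset, because $g$ is non-constant on the integral variety $U$. It therefore has dimension strictly less than $2\dim U$, so the two-point evaluation is not dominant, and $U$ is not $\mathbb A^1$-connected.

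The only point needing care is the interpretation of the definition. If $\mathbb A^1$-connectedness is instead phrased through very free $\mathbb A^1$-curves on the log compactification $X$ via \cite{CZ17}, then in the separable case the statement follows from the previous paragraph by that equivalence. Otherwise we use the family definition directly, as above. I do not expect a genuine obstacle: the argument is the same contraction argument as in the toric case.
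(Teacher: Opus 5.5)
Your proposal is correct and follows the same route as the paper. The paper proves this lemma by noting that the argument for (1) $\Rightarrow$ (2) in Proposition~\ref{prop:equivconditionstoric} carries over verbatim: a non-constant unit gives a morphism $U \to \mathbb{G}_m$ that contracts every $\mathbb{A}^1$-curve. You add the details the paper leaves implicit, namely the reduction to $\overline{\mathbf k}$ and the observation that the two-point evaluation map lands in the proper closed subset $\{g(x)=g(y)\}\subset U\times U$.
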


\begin{coro}
Let $(X,\Delta)$ be a geometrically $\mathbb{A}^{1}$-connected SNC pair with reduced boundary.  
Then the irreducible components $\{ \Delta_{i} \}$ of $\Delta$ are linearly independent in $\Pic(X)_{\mathbb{Q}}$.
\end{coro}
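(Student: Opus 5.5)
The corollary should follow from Lemma~\ref{lemm:A1connectedimpliesnoinvertible} by the argument used for (2) $\Leftrightarrow$ (4) in Proposition~\ref{prop:equivconditionstoric}. The goal is to turn any rational linear relation among the components $\Delta_i$ in $\Pic(X)_{\mathbb Q}$ into a nonconstant invertible function on $U$, contradicting the lemma. Since $\mathbb A^1$-connectedness is a geometric hypothesis, I would first pass to the separable (or algebraic) closure $\mathbf k^s$. Linear independence over $X$ follows from linear independence over $X_{\mathbf k^s}$, because pullback $\Pic(X)_{\mathbb Q} \to \Pic(X_{\mathbf k^s})_{\mathbb Q}$ sends a relation to a relation. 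One subtlety here is that a component $\Delta_i$ might split into several geometric components. This is harmless: if the geometric components are independent, then their Galois-orbit sums are independent as well.

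So assume $\mathbf k$ is algebraically closed and suppose there is a nontrivial relation $\sum a_i \Delta_i = 0$ in $\Pic(X)_{\mathbb Q}$ with $a_i \in \mathbb Q$. Clearing denominators gives integers $a_i$, not all zero, and a line bundle $\mathcal O(\sum a_i\Delta_i)$ that is torsion. Replacing the $a_i$ by a suitable multiple makes $\sum a_i \Delta_i$ linearly equivalent to $0$. Hence there is a rational function $f$ on $X$ with $\mathrm{div}(f) = \sum a_i \Delta_i$. Because the divisor of $f$ is supported on $\mathrm{Supp}(\Delta)$, the restriction $f|_U$ has neither zeros nor poles on $U$. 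As $X$ is smooth (SNC pair), hence normal, $f|_U$ lies in $\mathbf k[U]^\times$.

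Next, $f$ is not constant: since the $\Delta_i$ are distinct prime divisors, $\mathrm{div}(f) = \sum a_i\Delta_i \neq 0$. This gives a nonconstant unit on $U$, contradicting Lemma~\ref{lemm:A1connectedimpliesnoinvertible}.

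The main point needing care is the meaning of $\Pic(X)_{\mathbb Q}$. If it means numerical or algebraic equivalence rather than linear equivalence, then a relation only makes $\sum a_i\Delta_i$ algebraically trivial, that is, in $\Pic^0$ up to torsion, not necessarily principal. I would handle this by noting that an $\mathbb A^1$-connected pair has rationally connected $X$, since the compactified $\mathbb A^1$-curves are rational curves connecting general points. Then $H^1(X,\mathcal O_X)=0$ in characteristic $0$, or more robustly $\Pic^0$ is trivial for separably rationally connected $X$. Numerical and linear equivalence therefore agree up to torsion, and torsion is killed by passing to a multiple. If the paper's $\Pic(X)_{\mathbb Q}$ is simply $\Pic\otimes\mathbb Q$, this step is unnecessary.
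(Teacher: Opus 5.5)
Your proof is correct and follows the paper's argument: reduce to an algebraically closed field, clear denominators and pass to a multiple so that a relation in $\Pic(X)_{\mathbb Q}$ becomes $\sum a_i\Delta_i=\mathrm{div}(f)$, then note that $f$ is a nonconstant unit on $U$, contradicting Lemma~\ref{lemm:A1connectedimpliesnoinvertible}. Your extra remarks on Galois orbits of components and on the numerical-equivalence reading are fine but not needed, since the paper reads $\Pic(X)_{\mathbb Q}$ as $\Pic(X)\otimes\mathbb Q$; if you keep the numerical remark, note that the hypothesis gives only rational connectedness, not separable rational connectedness, so justify the triviality of $\Pic^0$ by the vanishing of the Albanese.
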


\begin{proof}
We may assume that our ground field is algebraically closed.
Suppose there were a linear relation between the $\Delta_{i}$ in $\Pic(X)_{\mathbb{Q}}$.  After multiplying by a sufficiently divisible positive integer, we obtain a linear relation between Cartier divisors supported on the $\Delta_{i}$'s.  This divisor must have the form $\mathrm{div}(f)$ where $f \in \mathbf k[U]^{\times}$, contradicting $\mathbb{A}^{1}$-connectedness.
\end{proof}

However, the converse of Lemma \ref{lemm:A1connectedimpliesnoinvertible} fails for non-toric pairs.  This is demonstrated by the following example.

\begin{exam} \label{exam:notktimes}
Let $\Delta \subset \mathbb{P}_{w,x,y,z}^{3}$ be defined by the homogeneous equation $x^{3} + y^{3} + z^{3} = 0$.  Thus $\Delta$ is the cone over a smooth plane elliptic curve $C \subset \mathbb{P}^{2}$.  It is clear that $(\mathbb{P}^{3},\Delta)$ is a log Fano pair.

We first claim that $(\mathbb{P}^{3},\Delta)$ is not $\mathbb{A}^{1}$-connected.  Indeed, since we have a morphism $U \to \mathbb{P}^{2} \backslash C$, it suffices to show that the latter open set is not $\mathbb{A}^{1}$-connected.  This is a consequence of the fact that the log tangent bundle for $(\mathbb{P}^{2},C)$ has degree $0$; thus for every $\mathbb{A}^{1}$-curve on $\mathbb{P}^{2} \backslash C$ the log normal sheaf has negative degree and so is rigid by \cite[Proposition 4.4]{CLT25}. 

We next claim that $\mathbf k[U]^{\times} = k^{\times}$.  Suppose that $\mathbf k[U]$ has an invertible function $f/g$ where $f,g$ have degree $\geq 1$, then $g$ would be a power of $(x^{3}+y^{3}+z^{3})$.  Since $\mathbf k[w,x,y,z]$ is a UFD and the numerator $f$ cannot vanish on $U$, we see that $f$ also is a power of $(x^{3}+y^{3}+z^{3})$ so that the function is constant.
\end{exam}

\begin{exam}
\label{rema:counterexample}
Let $X \to \mathbb{P}^{3}$ be the blow-up of the cone point in Example \ref{exam:notktimes}.  Let $\widetilde{\Delta}$ denote the strict transform of $\Delta$ and let $E$ denote the exceptional divisor.  The pair $(X,\widetilde{\Delta} + E)$ is a weak Fano SNC pair with $\mathbf k[U]^{\times} \cong \mathbf k^{\times}$ which is not $\mathbb{A}^{1}$-connected. (\cite[Section 5]{Gongyo12} gives many similar examples of weak Fano lc pairs with pathological behavior.)

However, we do not know of an example of a log Fano SNC pair $(X,\Delta)$ which is not $\mathbb{A}^{1}$-connected but still satisfies the condition $\mathbf k[U]^{\times} = \mathbf k^{\times}$.
\end{exam}

Note that in Example \ref{exam:notktimes} the base $(\mathbb{P}^{2},C)$ is log Calabi-Yau.  Just as in the toric setting, it seems difficult to formulate Manin's conjecture for integral points when every $\mathbb{A}^{1}$-curve is contracted by a fibration.  For this reason, we expect that $\mathbb{A}^{1}$-connectedness should be the right perspective for Manin's conjecture.

A version of the implication (1) $\Rightarrow$ (4) of Proposition \ref{prop:equivconditionstoric} also holds for arbitrary pairs $(X,\Delta)$.

\begin{lemm}
Let $(X,\Delta)$ be a SNC pair with reduced boundary.  If $(X,\Delta)$ is geometrically separably $\mathbb{A}^{1}$-connected, then the only morphism $T_{X} \to \mathcal{O}_{X}$ is the zero morphism. 
\end{lemm}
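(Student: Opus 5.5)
We reduce to an algebraically closed ground field and use the criterion recalled from \cite{CZ17}: since $(X,\Delta)$ is separably $\mathbb{A}^{1}$-connected, there is a very free $\mathbb{A}^{1}$-curve $\underline{f}\colon \underline{\mathbb{P}}^{1}\to \underline{X}$, i.e.\ $\underline{f}^{*}T_{X}$ is ample. Moreover, very free $\mathbb{A}^{1}$-curves come in families whose images cover a dense open subset of $\underline{X}$. This follows from the separable dominance of the two-point evaluation map in the definition, or equivalently from deforming a single very free curve: ampleness of the pullback of the log tangent bundle makes the evaluation map from the space of $\mathbb{A}^{1}$-curves with fixed contact order smooth at the marked point. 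So we fix a dense open subset $V\subset \underline{U}$ such that every point of $V$ lies on the image of some very free $\mathbb{A}^{1}$-curve $\underline{f}$ with $\underline{f}(0)$ equal to that point.

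Let $\phi\colon T_{X}\to\mathcal{O}_{X}$ be a morphism. For each very free $\mathbb{A}^{1}$-curve $\underline{f}$, pull back to obtain $\underline{f}^{*}\phi\colon \underline{f}^{*}T_{X}\to\mathcal{O}_{\mathbb{P}^{1}}$. Write $\underline{f}^{*}T_{X}\cong\bigoplus_{j}\mathcal{O}(a_{j})$ with all $a_{j}\geq 1$. Since $\mathrm{Hom}(\mathcal{O}(a_{j}),\mathcal{O})=H^{0}(\mathcal{O}(-a_{j}))=0$, the map $\underline{f}^{*}\phi$ vanishes identically. In particular, the fiber map $\phi_{x}\colon T_{X}(x)\to \mathbf k$ vanishes at $x=\underline{f}(0)$.

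Varying $\underline{f}$ shows that $\phi_{x}=0$ for all $x\in V$. A morphism of locally free sheaves on an integral scheme which vanishes on fibers over a dense open set is zero: its image is a subsheaf of the torsion-free sheaf $\mathcal{O}_{X}$ that is supported on a proper closed subset. Hence $\phi=0$.

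The main obstacle is the covering step, namely ensuring that very free $\mathbb{A}^{1}$-curves pass through a general point of $\underline{X}$ rather than only existing somewhere. I would handle this with the standard log deformation theory from \cite{CZ17}. Unobstructedness of deformations fixing $\infty$ and the contact order follows from $H^{1}(\underline{f}^{*}T_{X}(-\infty))=0$, which holds because $\underline{f}^{*}T_{X}$ is ample. The same vanishing makes the evaluation at $0$ surjective on tangent spaces, so the evaluation morphism is smooth and hence dominant. Alternatively, one can read the covering directly off the definition, since separable dominance of the family $\underline{f}_{\underline T}$ gives curves through general points, and a general member of that family is very free by the equivalence in \cite{CZ17}.
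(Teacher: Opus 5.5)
Your proof is correct and follows the paper's own argument. The paper restricts a putative nonzero map $T_X \to \mathcal{O}_X$ to a general very free $\mathbb{A}^1$-curve and notes that an ample bundle on $\underline{\mathbb{P}}^1$ has no nonzero map to $\mathcal{O}$. You additionally spell out two steps the paper leaves implicit: that very free $\mathbb{A}^1$-curves pass through general points, via smoothness of evaluation at $0$ from $H^1(\underline{f}^*T_X(-0))=0$, and that fiberwise vanishing on a dense open set forces $\phi=0$.
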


\begin{proof}
    Suppose that there were a non-zero map $T_{X} \to \mathcal{O}_{X}$.  Restricting to a general very free $\mathbb{A}^{1}$-curve, we see that the restricted log tangent bundle would fail to be ample, a contradiction.
\end{proof}

\cite[Remark 1.10]{JLR26} predicts that a converse implication should also hold.  Suppose that $(X,\Delta)$ is a SNC pair such that $-(K_{\underline{X}} + \Delta)$ is nef.  Suppose that for every quotient $T_{X} \to \mathcal{Q}$ of the log tangent bundle the first Chern class $c_{1}(\mathcal{Q})$ is pseudo-effective and non-zero.  Then $(X,\Delta)$ should be geometrically separably $\mathbb{A}^{1}$-connected.

\section{The structure of the space of rational curves on a smooth projective toric variety}
\label{sec:birationalgeometry}

Let $\mathbf k$ be a field of arbitrary characteristic. Let $\underline{X}$ be a smooth projective toric variety defined over $\mathbf k$ of dimension $n$ with the open orbit $\underline{T}$ such that the full toric boundary divisor $\Delta = \sum_i \Delta_i$ is a SNC divisor. 
We also assume that $\underline{X}$ is split. Let $\alpha$ be a nef class of $1$-cycles on $\underline{X}$. Then we consider the following scheme:
\[
\underline{M}_{\alpha}^\circ = \{ f : \underline{\mathbb P}^1 \to \underline{X} \in \underline{\mathrm{Mor}}(\underline{\mathbb P}^1, \underline{X}, \alpha) \, | \, f(\underline{\mathbb P}^1)\cap \underline{T} \neq \emptyset \}.
\]
This space has been studied by Bourqui:
\begin{theo}[{\cite[Theorem 1.10]{Bou16}}]
    Assume that $\alpha \in \mathrm{Nef}_1(\underline{X})^\circ$. 
    Then $\underline{M}_{\alpha}^\circ$ is geometrically irreducible and has the expected dimension: $$\dim(\underline{M}^{\circ}_{\alpha}) = -K_{\underline{X}}. \alpha + n.$$
\end{theo}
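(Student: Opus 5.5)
The approach is to use Cox's functorial description of morphisms into a smooth toric variety to identify $\underline{M}^\circ_\alpha$ with a free quotient of an open subset of an affine space. Irreducibility and the dimension count then become nearly formal. Since both properties are geometric, I would first base change to $\overline{\mathbf k}$.

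First I would recall Cox's theorem for smooth split toric varieties. A morphism $f:\underline{\mathbb P}^1\to\underline{X}$ is equivalent to the following data, taken modulo the action of $\underline{T}_{\mathrm{NS}}$:
\begin{itemize}
\item line bundles $L_i$ on $\underline{\mathbb P}^1$, one for each $i\in\Sigma^{(1)}$;
\item sections $s_i\in H^0(L_i)$;
\item compatibility isomorphisms $\bigotimes_i L_i^{\otimes\langle m,v_i\rangle}\cong\mathcal O$ for $m\in M$;
\end{itemize}
subject to a nondegeneracy condition. The condition says that for every subset $S\subset\Sigma^{(1)}$ not spanning a cone of $\Sigma$ (equivalently, every primitive collection), the sections $\{s_i\}_{i\in S}$ have no common zero.

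The image meets $\underline{T}$ iff every $s_i$ is nonzero, and then $\deg L_i=d_i:=\Delta_i.\alpha$. Since $\alpha$ is nef and each $\Delta_i$ is effective, $d_i\geq 0$. On $\underline{\mathbb P}^1$ we have $L_i\cong\mathcal O(d_i)$, and the compatibility isomorphisms can be absorbed into the choice of homogeneous coordinates. This yields an identification
\[
\underline{M}^\circ_\alpha\;\cong\;\underline{W}_\alpha/\underline{T}_{\mathrm{NS}},
\]
where $\underline{W}_\alpha\subset\prod_i\big(H^0(\underline{\mathbb P}^1,\mathcal O(d_i))\setminus 0\big)$ is the open subset cut out by the nondegeneracy condition. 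Here $\underline{T}_{\mathrm{NS}}\subset\underline{\mathbb G}_m^{\Sigma^{(1)}}$ acts by scaling the $s_i$. The action is free, because all the $s_i$ are nonzero and $\underline{T}_{\mathrm{NS}}\to\underline{\mathbb G}_m^{\Sigma^{(1)}}$ is injective. The quotient is a $\underline{T}_{\mathrm{NS}}$-torsor, namely the pullback of the universal torsor over $\underline{X}$ to the relevant Hom space.

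Next I would count dimensions. $\underline{W}_\alpha$ is open in an affine space of dimension
\[
\sum_i(d_i+1)=-K_{\underline{X}}.\alpha+|\Sigma^{(1)}|,
\]
using $-K_{\underline{X}}=\sum_i\Delta_i$. It is therefore irreducible as soon as it is nonempty. Since $\dim\underline{T}_{\mathrm{NS}}=\rho(\underline{X})=|\Sigma^{(1)}|-n$, the free quotient is irreducible of dimension $-K_{\underline{X}}.\alpha+n$.

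It remains to show $\underline{W}_\alpha\neq\emptyset$ over $\overline{\mathbf k}$. I would take each $s_i$ to be a product of $d_i$ linear forms, choosing all the roots pairwise distinct across all indices $i$. Then no two $s_i$ share a zero. Every non-face set $S$ has $|S|\geq 2$, because a single ray is always a cone, so the nondegeneracy condition holds. The tuple $(d_i)$ automatically satisfies $\sum_i d_i v_i=0$, since $\alpha\in N_1(\underline X)$ is exactly the kernel of $\mathbb Z^{\Sigma^{(1)}}\to N$. Hence it does come from a morphism of class $\alpha$.

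I expect the main obstacle to be the first step: establishing Cox's description as an isomorphism of schemes, rather than merely a bijection on points, and checking that the quotient by $\underline{T}_{\mathrm{NS}}$ is a geometric quotient realized as a torsor. To handle it, I would apply Cox's functor-of-points theorem to $\underline{\mathbb P}^1\times S$ for arbitrary test schemes $S$. I would then use that the universal torsor $\mathbb A^{\Sigma^{(1)}}\setminus Z\to\underline{X}$ is a $\underline{T}_{\mathrm{NS}}$-torsor, so that lifting a family of maps to the torsor is unique up to $\underline{T}_{\mathrm{NS}}$, after trivializing the pulled-back line bundles Zariski locally on $S$.
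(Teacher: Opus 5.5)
Your proof is correct. It rests on the same Cox-ring picture as the paper, but the work is distributed differently.

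The paper does not reprove Bourqui's theorem directly. It recovers it, for every nef $\alpha$ and not only interior ones, from Lemmas~\ref{lemm:smoothness}, \ref{lemm:smoothnessoffibration} and \ref{lemm:isotriviality}:
\begin{itemize}
\item The dimension $-K_{\underline X}.\alpha+n$ and smoothness come from deformation theory. The log tangent bundle of $(\underline X,\Delta)$ is trivial, so $\rH^1(\underline{\mathbb P}^1,f^*T_{\underline X})=0$.
\item Log deformation theory shows that the fibers of $\Phi_\alpha\colon\underline M^\circ_\alpha\to\prod_i\underline{\mathrm{Hilb}}^{[r_i]}(\underline{\mathbb P}^1)$ are smooth of dimension $n$. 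Then \cite[Lemma 2.1]{DLTT25} makes $\Phi_\alpha$ smooth with open image.
\item Cox's description is used only fiber by fiber, to identify each fiber with $\underline{\mathbb G}_m^{\Sigma^{(1)}}/\underline T_{\mathrm{NS}}\cong\underline T$.
\end{itemize}

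Your $\underline W_\alpha$ is exactly the $\underline{\mathbb G}_m^{\Sigma^{(1)}}$-torsor over the paper's open set $\underline U_{\mathbf r}\subset\prod_i\mathbb P^{r_i}$; your $d_i$ are the paper's $r_i$. Likewise $\underline W_\alpha/\underline T_{\mathrm{NS}}\to\underline U_{\mathbf r}$ is the paper's $\underline T$-torsor $\Phi_\alpha$. So you replace the deformation-theoretic input with a global presentation and the coefficient count $\sum_i(d_i+1)-\rho(\underline X)$.

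Your route is more elementary and gives smoothness and rationality at no extra cost. The price is that you need Cox's theorem in families, whereas the paper needs it only over fields. That price is smaller than you suggest. For irreducibility and dimension alone, two things suffice:
\begin{itemize}
\item the morphism $\underline W_\alpha\to\underline M^\circ_\alpha$ induced by the universal map $\underline{\mathbb P}^1\times\underline W_\alpha\to\underline X$, which descends from $(\underline{\mathbb A}^2\setminus 0)\times\underline W_\alpha$ because $\sum_i d_iv_i=0$ places $\lambda\mapsto(\lambda^{d_i})_i$ inside $\underline T_{\mathrm{NS}}$;
\item Cox's bijection on $\overline{\mathbf k}$-points, which shows this morphism is surjective with fibers the free $\underline T_{\mathrm{NS}}$-orbits.
\end{itemize}

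Your explicit nonemptiness check, choosing the roots of the $s_i$ pairwise distinct, also supplies a step the paper leaves implicit in Lemma~\ref{lemm:Ucharacterization}.
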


In this paper, we extend the above theorem to arbitrary nef classes, moreover, we analyze the birational geometry of $\underline{M}_{\alpha}^\circ$.  
We will appeal to the description of the moduli space of curves via sections of generators of the Cox ring, see e.g.~\cite{Guest95} or \cite{Cox95}.  
First we prove the following lemma:

\begin{lemm}
\label{lemm:smoothness}
    Let $\alpha \in \mathrm{Nef}_1(\underline{X})$ be any nef class. Then $\underline{M}_{\alpha}^\circ$ is smooth and all irreducible components have the expected dimension.
\end{lemm}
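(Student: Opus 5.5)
Since $\underline{M}_{\alpha}^\circ$ is an open subscheme of $\underline{\mathrm{Mor}}(\underline{\mathbb P}^1, \underline{X}, \alpha)$, I will use the deformation theory of morphisms. The Zariski tangent space at $[f]$ is $H^0(\underline{\mathbb P}^1, f^*T_{\underline{X}})$, and the obstructions lie in $H^1(\underline{\mathbb P}^1, f^*T_{\underline{X}})$. It therefore suffices to show that $H^1(\underline{\mathbb P}^1, f^*T_{\underline{X}}) = 0$ for every $f$ meeting $\underline{T}$. Granting this vanishing, the space is smooth at $[f]$ of dimension
\[
h^0(f^*T_{\underline{X}}) = \deg f^*T_{\underline{X}} + n = -K_{\underline{X}}.\alpha + n
\]
by Riemann--Roch. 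So every irreducible component has the expected dimension, and this holds over an arbitrary field, since the vanishing can be checked after base change to $\overline{\mathbf k}$.

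To prove the vanishing I will use the generalized Euler sequence for a smooth split toric variety:
\[
0 \to \mathcal O_{\underline{X}} \otimes \mathrm{Pic}(\underline{X})^\vee \to \bigoplus_{i \in \Sigma^{(1)}} \mathcal O_{\underline{X}}(\Delta_i) \to T_{\underline{X}} \to 0 .
\]
Pulling back by $f$ gives an exact sequence on $\underline{\mathbb P}^1$, and the associated long exact sequence yields a surjection
\[
\bigoplus_i H^1(\underline{\mathbb P}^1, \mathcal O(\Delta_i.\alpha)) \twoheadrightarrow H^1(\underline{\mathbb P}^1, f^*T_{\underline{X}}),
\]
because $H^2$ vanishes on a curve. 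The key input is now that $f(\underline{\mathbb P}^1) \cap \underline{T} \neq \emptyset$. Thus $f(\underline{\mathbb P}^1) \not\subset \Delta_i$ for every $i$, so $f^*\Delta_i$ is an effective divisor and $\Delta_i.\alpha \geq 0$. Hence $H^1(\mathcal O(\Delta_i.\alpha)) = 0$ for each $i$, and therefore $H^1(f^*T_{\underline{X}}) = 0$.

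Note that nefness of $\alpha$ is not really used beyond ensuring that $\underline{M}_\alpha^\circ$ can be nonempty; the condition of meeting the torus is what matters. An alternative, consistent with the Cox ring viewpoint, is to describe $\underline{M}_\alpha^\circ$ as the quotient by $\underline{T}_{\mathrm{NS}}$ of tuples of sections $(s_i) \in \prod_i H^0(\mathcal O(\Delta_i.\alpha))$, none identically zero, satisfying an open condition: for each primitive collection the $s_i$ have no common zero. This open subset of affine space is smooth of dimension $\sum_i(\Delta_i.\alpha + 1)$, and the free $\underline{T}_{\mathrm{NS}}$-action drops the dimension by $\rho(\underline{X}) = |\Sigma^{(1)}| - n$, giving the same count. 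I would present the deformation-theoretic argument as the main proof.

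The only step needing care is the existence and exactness of the Euler sequence in arbitrary characteristic for split smooth toric varieties. This is standard (Cox's quotient construction, which is characteristic free), so I do not expect a serious obstacle; the main subtlety is verifying that meeting $\underline{T}$ gives $f^*\mathcal O(\Delta_i)$ nonnegative degree for every $i$.
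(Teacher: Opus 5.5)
Your proof is correct. It has the same overall structure as the paper's: reduce to $\overline{\mathbf k}$, show $\rH^1(\underline{\mathbb P}^1, f^*T_{\underline{X}}) = 0$ using that $f$ meets $\underline{T}$, and conclude by deformation theory of $\underline{\mathrm{Mor}}$. You get the vanishing from a different toric exact sequence, however.

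The paper uses the log tangent sequence for the full boundary, $0 \to T_X \to T_{\underline{X}} \to \oplus_i \mathcal O_{\Delta_i}(\Delta_i) \to 0$, together with the triviality $T_X \cong \mathcal O_{\underline{X}}^{\oplus n}$. Pulling back along $f$ gives $\mathcal O^{\oplus n} \hookrightarrow f^*T_{\underline{X}}$ with torsion cokernel, and both have vanishing $\rH^1$. You instead pull back the dual generalized Euler sequence, which exhibits $f^*T_{\underline{X}}$ as a quotient of $\oplus_i \mathcal O(\Delta_i.\alpha)$ by a trivial bundle.

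The hypothesis that $f$ meets $\underline{T}$ enters once in each argument, but in different ways. In the paper it makes the pulled-back map generically an isomorphism, hence injective with torsion cokernel; the paper glosses this by citing local freeness of $T_X$. In yours it gives $\Delta_i.\alpha \geq 0$. Since your sequence consists of vector bundles, it pulls back exactly with nothing to check.

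Each route has its advantage. The paper's choice is reused immediately in Lemma~\ref{lemm:smoothnessoffibration}: the same triviality $f^*T_X \cong \mathcal O^{\oplus n}$ shows that the fibres of $\Phi_\alpha$, which are log maps with fixed contact data, are unobstructed of dimension $n$. Your route fits the Cox-ring and universal-torsor picture behind Lemmas~\ref{lemm:isotriviality} and \ref{lemm:Ucharacterization}. It also makes it transparent that $f^*T_{\underline{X}}$ is globally generated, i.e.\ every such $f$ is free.

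Your remark that nefness is automatic for curves meeting $\underline{T}$ is also right, since the $\Delta_i$ generate the effective cone.
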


\begin{proof}
    After taking a base change to an algebraic closure, we may assume that $\mathbf k$ is algebraically closed. We claim that for any $$[f : \underline{\mathbb P}^1 \to \underline{X}] \in \underline{M}_{\alpha}^\circ(\mathbf k),$$ we have
    \[
    \rH^1(\underline{\mathbb P}^1, f^*T_{\underline{X}}) = 0.
    \]
    Our assertion follows from this claim.

    Let $X$ be the smooth log scheme associated to $(\underline{X}, \Delta)$. Then we have the exact sequence:
    \[
    0 \to T_X \to T_{\underline{X}} \to \oplus_i \mathcal O_{\Delta_i}(\Delta_i) \to 0,
    \]
    where $T_X$ is the log tangent bundle of the smooth log scheme $X$.
    Since $T_X$ is a locally free sheaf, the pullback gives us an exact sequence
    \[
    0 \to f^*T_X \to f^*T_{\underline{X}} \to \mathcal Q \to 0,
    \]
    where $\mathcal Q$ is a torsion sheaf. Since we have
    \[
    T_X \cong \oplus_{k = 1}^{n} \mathcal O_{X},
    \]
    our claim follows.
\end{proof}

Let $r_i = \Delta_i.\alpha$ and $\bold r = (r_i)$.  (We remind the reader that $\bold r$ must satisfy the balancing condition $\sum r_{i} v_{i} = 0$.)
We consider the following morphism:
\[
\Phi_\alpha : \underline{M}_{\alpha}^\circ \to \underline{H}_{\bold r}:= \prod_{i \in \Sigma^{(1)}} \underline{\mathrm{Hilb}}^{[r_i]}(\underline{\mathbb P}^1), [f : \underline{\mathbb P}^1 \to \underline{X}] \mapsto (f^*\Delta_i).
\]
Regarding this fibration, we first prove the following lemma:

\begin{lemm}
\label{lemm:smoothnessoffibration}
    The image 
    \[
    \Phi_\alpha(\underline{M}_{\alpha}^\circ) \subset \underline{H}_{\bold r},
    \]
    is a Zariski open subset, and the morphism
    \[
    \Phi_\alpha : \underline{M}_{\alpha}^\circ \to \Phi_\alpha(\underline{M}_{\alpha}^\circ),
    \]
    is a smooth morphism whose fibers are $n$-dimensional.
\end{lemm}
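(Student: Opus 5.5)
We use the Cox ring description of $\underline{X}$. Since $\underline{X}$ is smooth, split and projective, a morphism $f:\underline{\mathbb P}^1 \to \underline{X}$ with $f(\underline{\mathbb P}^1)\cap \underline{T}\neq\emptyset$ and class $\alpha$ corresponds to a tuple of sections $(s_i)_{i\in\Sigma^{(1)}}$ with $s_i \in H^0(\underline{\mathbb P}^1,\mathcal O(r_i))$, each $s_i \neq 0$. The tuple must satisfy the non-degeneracy condition coming from the irrelevant ideal: for every $p\in \underline{\mathbb P}^1$, the set $\{i : s_i(p)=0\}$ spans a cone of $\Sigma$. Two tuples give the same $f$ exactly when they differ by the action of $\underline{T}_{\mathrm{NS}}(\mathbf k)$, acting by scalars via the inclusion $\underline{T}_{\mathrm{NS}}\subset \underline{\mathbb G}_m^{\Sigma^{(1)}}$. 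In these coordinates $f^*\Delta_i = \mathrm{div}(s_i)$, so $\Phi_\alpha$ sends $[(s_i)]$ to $(\mathrm{div}(s_i))_i$. Identifying $\underline{\mathrm{Hilb}}^{[r_i]}(\underline{\mathbb P}^1)$ with $\mathbb P(H^0(\mathcal O(r_i)))$, the map $\Phi_\alpha$ becomes the composition
\[
\underline{M}_\alpha^\circ = \underline{W}/\underline{T}_{\mathrm{NS}} \to \prod_i \big(H^0(\mathcal O(r_i))\setminus 0\big)/\underline{\mathbb G}_m^{\Sigma^{(1)}} = \underline{H}_{\bold r}.
\]
Here $\underline{W}\subset \prod_i (H^0(\mathcal O(r_i))\setminus 0)$ is the locus satisfying the non-degeneracy condition.

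\textbf{Step 1: the image is open.} The non-degeneracy condition depends only on the divisors $\mathrm{div}(s_i)$. Explicitly, it requires that for every point $p$ and every collection $I$ of rays not spanning a cone of $\Sigma$, not all of the divisors $\mathrm{div}(s_i)$, $i\in I$, contain $p$. So $\underline{W}$ is the full preimage of a subset $\underline{H}^\circ\subset \underline{H}_{\bold r}$. Each condition "the divisors $\{Z_i\}_{i\in I}$ share a common point" is closed in $\underline{H}_{\bold r}$, since it is the image of a closed incidence subvariety of $\underline{\mathbb P}^1\times \underline{H}_{\bold r}$ under a proper projection. Hence $\underline{H}^\circ$ is open. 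It equals the image of $\Phi_\alpha$ because $\underline{W}\to \underline{H}^\circ$ is surjective on points.

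\textbf{Step 2: smoothness and fiber dimension.} The map $\underline{W}\to \underline{H}^\circ$ is the restriction of a $\underline{\mathbb G}_m^{\Sigma^{(1)}}$-torsor, so it is smooth. Passing to the quotient by the freely acting subtorus $\underline{T}_{\mathrm{NS}}$, the map $\Phi_\alpha$ becomes a torsor under the quotient torus
\[
\underline{\mathbb G}_m^{\Sigma^{(1)}}/\underline{T}_{\mathrm{NS}}\cong \underline{T},
\]
restricted over $\underline{H}^\circ$. Concretely, the fiber over $(Z_i)$ is the set of $\underline{T}$-translates of a single curve $f$, namely $t\cdot f$ for $t \in \underline{T}$. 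The action of $\underline{T}$ is free because $f$ meets the open orbit. A torsor under an $n$-dimensional torus is smooth with $n$-dimensional fibers, which gives the claim.

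As a consistency check, the dimension count agrees with Lemma~\ref{lemm:smoothness}: $\dim \underline{H}_{\bold r}=\sum r_i=-K_{\underline{X}}.\alpha$.

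\textbf{Main obstacle.} The main difficulty is making the identification $\underline{M}^\circ_\alpha\cong \underline{W}/\underline{T}_{\mathrm{NS}}$ rigorous scheme-theoretically, in arbitrary characteristic and over non-closed $\mathbf k$, including showing that the quotient is a genuine free quotient. Two ingredients are needed.
\begin{enumerate}
\item Use Cox's functorial description of morphisms to smooth toric varieties via $\Delta$-collections, applied over an arbitrary base $S$. This identifies $\underline{M}^\circ_\alpha$ as a functor.
\item Verify freeness of the $\underline{T}_{\mathrm{NS}}$-action directly. If $s_i=\lambda_i s_i$ for all $i$ with $\lambda\in \underline{T}_{\mathrm{NS}}$, then all $\lambda_i=1$, since every $s_i\neq0$.
\end{enumerate}
Once these are in place, Steps 1 and 2 are formal.
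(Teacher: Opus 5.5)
Your proof is correct, but it follows a different route from the paper's. The paper argues by deformation theory. It views a fiber of $\Phi_\alpha$ as a space of log maps $\underline{\mathbb P}^1 \to X$ (with the full toric boundary) having fixed contact points. Since the log tangent bundle is trivial, $\rH^1(\underline{\mathbb P}^1, f^*T_X)=0$, so every geometric fiber is smooth of dimension $n$. It combines this with Lemma~\ref{lemm:smoothness}: $\underline{M}^\circ_\alpha$ is smooth and equidimensional of dimension $-K_{\underline{X}}.\alpha+n$, while $\dim \underline{H}_{\mathbf r}=-K_{\underline{X}}.\alpha$. The criterion \cite[Lemma 2.1]{DLTT25} then gives smoothness of $\Phi_\alpha$, and openness of the image follows because smooth morphisms are open. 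That the fibers are copies of $\underline{T}$, and that the image is cut out by the support condition, is proved only afterwards via the Cox ring, in Lemmas~\ref{lemm:isotriviality} and \ref{lemm:Ucharacterization}.

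You instead put the Cox ring first. You exhibit $\Phi_\alpha$ as the $\underline{\mathbb G}_m^{\Sigma^{(1)}}/\underline{T}_{\mathrm{NS}}\cong \underline{T}$-torsor induced from the tautological $\underline{\mathbb G}_m^{\Sigma^{(1)}}$-torsor over the explicit open set $\underline{H}^\circ$. This proves the lemma together with Lemmas~\ref{lemm:isotriviality} and \ref{lemm:Ucharacterization} in one stroke. It even gives Zariski-local triviality, which makes the point count in Lemma~\ref{lemm:countingproduct} immediate.

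The cost is the step you correctly flag. You need Cox's functor of points over an arbitrary base to identify $\underline{M}^\circ_\alpha$ with $\underline{W}/\underline{T}_{\mathrm{NS}}$ as schemes. This includes the compatible trivializations attached to characters $m$; their existence on $\underline{\mathbb P}^1$ is exactly the balancing condition $\sum_i r_i v_i = 0$, and this is where the quotient by $\underline{T}_{\mathrm{NS}}$ rather than by $\underline{\mathbb G}_m^{\Sigma^{(1)}}$ comes from. The paper, by contrast, only needs fiberwise obstruction vanishing plus a dimension count, and never constructs the global quotient.
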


\begin{proof} 
We first claim that every geometric fiber of $\Phi_\alpha$ is smooth of dimension $n$. Indeed, a fiber of $\Phi_\alpha$ parametrizes non-degenerate genus $0$ log maps $f : C \to X$ where $C$ has a fixed log structure. (Here $X$ comes with the log structure induced by the full toric boundary.)
Since $f^*T_X \cong \oplus_{k = 1}^n \mathcal O_{\underline{\mathbb{P}^{1}}}$, the obstruction space $\rH^1(\underline{\mathbb P}^1, f^*T_X)$ for log deformations of $f$ vanishes. Hence our claim follows.

Lemma~\ref{lemm:smoothness} shows that $\underline{M}_{\alpha}^{\circ}$ is equidimensional and the previous paragraph shows that the relative dimension of $\Phi_\alpha$ is the same as the difference in dimensions of $\underline{M}_{\alpha}^\circ$ and $\underline{H}_{\bold r}$.  By applying \cite[Lemma 2.1]{DLTT25} we obtain smoothness of $\Phi_{\alpha}$.
\end{proof}

Moreover $\Phi_\alpha$ is isotrivial:

\begin{lemm}
\label{lemm:isotriviality}
    Let $\mathbf k \subset \mathbf k'$ be any extension and fix a $\mathbf k'$-point $x$ in $\Phi_\alpha(\underline{M}_{\alpha}^\circ)$.
    The fiber of $\Phi_\alpha$ over $x$ is isomorphic to $\underline{T}_{\mathbf k'}$.
\end{lemm}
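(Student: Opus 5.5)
We use the Cox ring description of morphisms to a smooth split toric variety (Cox's functorial description, \cite{Cox95}). A morphism $f : \underline{\mathbb P}^1_{\mathbf k'} \to \underline{X}_{\mathbf k'}$ meeting $\underline{T}$ amounts to the following data, taken up to the action of $\underline{T}_{\mathrm{NS}}(\mathbf k')$:
\begin{itemize}
\item line bundles $L_i$ on $\underline{\mathbb P}^1_{\mathbf k'}$ with $\deg L_i = r_i$;
\item nonzero sections $s_i \in H^0(L_i)$;
\item isomorphisms $\bigotimes_i L_i^{\otimes \langle m, v_i\rangle} \cong \mathcal O$ for $m \in M$, compatible in $m$;
\end{itemize}
subject to the nondegeneracy condition that for every point, the set of $i$ with $s_i$ vanishing there spans a cone of $\Sigma$. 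On $\underline{\mathbb P}^1$ we may take $L_i = \mathcal O(r_i)$, and the compatibility isomorphisms exist because of the balancing condition $\sum r_i v_i = 0$. The data then reduce to a tuple of nonzero binary forms $(s_i)$ with $\deg s_i = r_i$, modulo the torus $\underline{T}_{\mathrm{NS}}$ acting by scaling. In these terms $\Phi_\alpha$ sends $f$ to $(\mathrm{div}(s_i))_i$.

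Fix a $\mathbf k'$-point $x = (Z_i)_i$ in the image of $\Phi_\alpha$, where each $Z_i$ is an effective divisor of degree $r_i$ defined over $\mathbf k'$. Choose forms $\sigma_i$ over $\mathbf k'$ with $\mathrm{div}(\sigma_i) = Z_i$; these exist since $Z_i$ is a $\mathbf k'$-rational divisor on $\underline{\mathbb P}^1$. Every tuple $(s_i)$ with $\mathrm{div}(s_i) = Z_i$ is then $(\lambda_i \sigma_i)$ with $\lambda \in \underline{\mathbb G}_m^{\Sigma^{(1)}}$. Nondegeneracy depends only on the divisors, so it holds for all such tuples: it holds for one, since $x$ is in the image. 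Hence the fiber is $\underline{\mathbb G}_m^{\Sigma^{(1)}}/\underline{T}_{\mathrm{NS}} \cong \underline{T}$, base changed to $\mathbf k'$. Concretely, $\underline{T}_{\mathbf k'}$ acts on the fiber by $t \cdot f$, using the torus action on $\underline{X}$, which preserves the $f^*\Delta_i$. This action is simply transitive on geometric points.

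To upgrade this to a scheme isomorphism, I would argue as follows.
\begin{itemize}
\item Define the orbit map $\underline{T}_{\mathbf k'} \to \Phi_\alpha^{-1}(x)$ by $t \mapsto t\cdot f_0$, where $f_0$ is the $\mathbf k'$-point of the fiber given by $(\sigma_i)$. The existence of a rational base point is exactly what the choice of the $\sigma_i$ provides.
\item By the description above, the orbit map is bijective on $\bar{\mathbf k}$-points.
\item The fiber is smooth of dimension $n$ by Lemma~\ref{lemm:smoothnessoffibration}.
\item The map is injective on tangent spaces: the stabilizer is trivial as a group scheme, because a point of $\underline{T}$ fixing $f_0$ fixes the point $f_0(p) \in \underline{T}$.
\end{itemize}
A bijective morphism between smooth varieties that is étale, as injectivity on tangent spaces in equal dimension gives, is an isomorphism.

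The main obstacle is making the Cox-data identification rigorous in families, over non-closed $\mathbf k'$ and in arbitrary characteristic. In particular one must check that the stabilizer is trivial scheme-theoretically, not just on points; in characteristic $p$ this is what rules out purely inseparable issues. Using the orbit map with a rational base point, plus smoothness from Lemma~\ref{lemm:smoothnessoffibration}, avoids needing a full functorial Cox theorem.
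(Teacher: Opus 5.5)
Your proposal is correct and follows the same route as the paper. The Cox-ring description shows that tuples of forms with fixed divisors form a single $\underline{\mathbb G}_m^{\Sigma^{(1)}}$-orbit, while curves are determined only up to $\underline{T}_{\mathrm{NS}}$, so the fiber is $\underline{\mathbb G}_m^{\Sigma^{(1)}}/\underline{T}_{\mathrm{NS}} \cong \underline{T}$. Your orbit-map argument is a sound extra step that upgrades this pointwise identification to a scheme isomorphism, which the paper leaves implicit; it uses the rational base point from the $\sigma_i$, smoothness of the fiber from Lemma~\ref{lemm:smoothnessoffibration}, and a trivial scheme-theoretic stabilizer (check this after passing to $\overline{\mathbf k}$, so that a point $p$ with $f_0(p)\in\underline{T}$ exists even when $\mathbf k'$ is finite).
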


\begin{proof}
    Without loss of generality, we may assume that $\mathbf k = \mathbf k'$. The point $x$ on $\underline{H}_{\bold r}$ is defined by the choice of a section $\mathsf s_i \in H^0(\underline{\mathbb P}^1, \mathcal O(r_i))$ for every $i \in \Sigma^{(1)}$ up to the natural $\underline{\mathbb G}_m^{\Sigma^{(1)}}$-action. However, the corresponding rational curve $f : \underline{\mathbb P}^1 \to \underline{X}$ is determined by $x$ up to the action of the N\'eron-Severi torus $\underline{T}_{\mathrm{NS}}$. 
    Hence the fiber of $\Phi_\alpha$ at $x$ is isomorphic to
    \[
    \underline{\mathbb G}_m^{\Sigma^{(1)}}/\underline{T}_{\mathrm{NS}} \cong \underline{T}.
    \]
    Thus our assertion follows.
\end{proof}

As a corollary, we obtain
\begin{coro}
    Let $\alpha \in \mathrm{Nef}_1(\underline{X})$ be any nef class. 
    Then $\underline{M}_{\alpha}^\circ$ is geometrically irreducible and rational over $\mathbf k$.
\end{coro}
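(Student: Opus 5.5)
The plan is to deduce both assertions from the fibration $\Phi_\alpha : \underline{M}_{\alpha}^\circ \to \Phi_\alpha(\underline{M}_{\alpha}^\circ) \subset \underline{H}_{\bold r}$. By Lemma~\ref{lemm:smoothnessoffibration}, $\Phi_\alpha$ is smooth with $n$-dimensional fibers onto a Zariski open image. By Lemma~\ref{lemm:isotriviality}, every fiber over a point defined over any extension $\mathbf k'$ is isomorphic to $\underline{T}_{\mathbf k'}$. The base is well understood. Each $\underline{\mathrm{Hilb}}^{[r_i]}(\underline{\mathbb P}^1) \cong \underline{\mathbb P}^{r_i}$, namely $\mathbb P(H^0(\underline{\mathbb P}^1,\mathcal O(r_i)))$. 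Hence $\underline{H}_{\bold r}$ is a product of projective spaces, which is geometrically irreducible and rational over $\mathbf k$. Any nonempty open subset of it inherits both properties. Nonemptiness holds whenever $\underline{M}_{\alpha}^\circ \neq \emptyset$, and if $\underline{M}^\circ_\alpha$ is empty there is nothing to prove. One can check nonemptiness directly by choosing generic sections $\mathsf s_i$ with disjoint zero loci, which by the Cox construction give a curve meeting $\underline{T}$.

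For geometric irreducibility, pass to $\overline{\mathbf k}$. Then $\Phi_\alpha$ is a smooth, hence open, surjection onto an irreducible base, and all its fibers are $\underline{T}$, which is irreducible. A standard topological lemma then applies. If $Y \to B$ is open and surjective with irreducible fibers and $B$ is irreducible, then $Y$ is irreducible. The argument is as follows. Take irreducible components $Y_j$. Since fibers are irreducible, each fiber lies in some $Y_j$. Because there are finitely many components and $\Phi_\alpha$ is open, the generic fiber lies in a single $Y_j$. Equidimensionality from Lemma~\ref{lemm:smoothness} shows the other components would have smaller dimension, which contradicts their being components of a smooth equidimensional scheme. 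Alternatively, smoothness gives flatness, so every component dominates $B$, and the generic fiber is irreducible, so all components coincide.

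For rationality over $\mathbf k$, restrict to the generic point $\eta$ of the base, with function field $K = \mathbf k(\underline{H}_{\bold r})$. By Lemma~\ref{lemm:isotriviality} with $\mathbf k' = K$, the generic fiber is isomorphic to $\underline{T}_K \cong \underline{\mathbb G}_{m,K}^n$, since $\underline{X}$ is split. This variety is rational over $K$. Therefore $\mathbf k(\underline{M}_{\alpha}^\circ) \cong K(t_1,\dots,t_n)$, which is purely transcendental over $\mathbf k$. One point is subtle: Lemma~\ref{lemm:isotriviality} is applied to a non-closed point. Its proof works verbatim over the field $K$. The $K$-point $\eta$ gives sections $\mathsf s_i$ over $\underline{\mathbb P}^1_K$ up to scaling, and the fiber is the $\underline{\mathbb G}_m^{\Sigma^{(1)}}/\underline{T}_{\mathrm{NS}}$-torsor of lifts. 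This torsor is trivial because the point lifts to actual sections, using Hilbert 90 for the split torus $\underline{\mathbb G}_m^{\Sigma^{(1)}}$.

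The main obstacle is exactly this triviality of the fiber as a torsor rather than merely as a form of $\underline{T}$. The plan is to verify it by writing the fiber explicitly as $\{(\lambda_i \mathsf s_i)\}/\underline{T}_{\mathrm{NS}}$. I would also double-check that the balancing and nondegeneracy condition, that the curve meets $\underline{T}$, is preserved under scaling, so that the whole orbit lies in $\underline{M}_{\alpha}^\circ$.
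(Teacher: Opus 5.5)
Your proposal is correct and follows the paper's route. The paper deduces the corollary directly from Lemma~\ref{lemm:smoothnessoffibration} (a smooth fibration onto an open subset of the product of projective spaces $\underline{H}_{\mathbf r}$) and Lemma~\ref{lemm:isotriviality} (every fiber over a $\mathbf k'$-point, including the generic point, is $\underline{T}_{\mathbf k'}$), and you simply write out those details. The torsor-triviality concern you flag is already covered by that lemma holding for arbitrary $\mathbf k'$: a $K$-point of $\mathbb P(H^0(\mathcal O(r_i)))$ lifts to an actual section over $K$, which gives the fiber a rational point.
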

\begin{proof}
    This follows from Lemmas~\ref{lemm:smoothnessoffibration} and \ref{lemm:isotriviality}.
\end{proof}

Finally we will characterize the image
\[
\underline{U}_{\bold r} :=\Phi_\alpha(\underline{M}_{\alpha}^\circ) \subset \underline{H}_{\bold r}.
\]
To this end, we use the Cox ring of $\underline{X}$ and the representation of $\underline{X}$ as the quotient of the universal torsor. Since we assume that $\underline{X}$ is split, $\underline{X}$ can be determined from a fan $\Sigma \subset N$. Let $\Sigma^{(1)}$ be the set of $1$-dimensional rays which corresponds to the set of boundary divisors. 
Then the Cox ring of $\underline{X}$ is the polynomial ring
\[
R = \mathbf k[x_i \, |\, i \in \Sigma^{(1)}].
\]
For each maximal cone $\sigma \in \Sigma$, let
\[
x_{\hat{\sigma}} = \prod_{i \not \in \sigma} x_i,
\]
and define the irrelevant ideal
\[
I(\Sigma) = \langle x_{\hat{\sigma}}\, |\, \sigma \in \Sigma_{\max}\rangle \subset R.
\]
We have a natural morphism
\[
\pi_X : \underline{\mathbb A}^{\Sigma^{(1)}} \setminus \underline{Z(I(\Sigma))} \to \underline{X},
\]
which is a $\underline{T}_{\mathrm{NS}}$-torsor.

Let $w = (w_i)\in \underline{H}_{\bold r}(\overline{\mathbf k})$, and $\mathsf s_i \in H^0(\underline{\mathbb P}^1_{\overline{\mathbf k}}, \mathcal O(r_i))$ be a section corresponding to $w_i$.
Then we have the following lemma:
\begin{lemm}
\label{lemm:Ucharacterization}
    A geometric point $w = (w_i)\in \underline{H}_{\bold r}(\overline{\mathbf k})$ is contained in $\underline{U}_{\bold r}$ if and only if for any geometric point $p \in \underline{\mathbb P^1}$, we have
    \[
    (\mathsf s_i(p)) \not\in \underline{Z(I(\Sigma))}.
    \]
    In other words, a geometric point $w = (w_i)\in \underline{H}_{\bold r}(\overline{\mathbf k})$ is contained in $\underline{U}_{\bold r}$ if and only if for any subset $I \subset \Sigma^{(1)}$, we have
    \begin{equation} \label{eq:supportcondition}
    \cap_{i \in I} \mathrm{Supp}(w_i) \neq \emptyset \implies \cap_{i \in I} \Delta_i \neq \emptyset.
    \end{equation}
\end{lemm}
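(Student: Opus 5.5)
The plan is to use Cox's description of morphisms to a smooth toric variety via the universal torsor $\pi_X$. Since the statement is about geometric points, we may base change to $\overline{\mathbf k}$ throughout. A point $w\in \underline{H}_{\bold r}(\overline{\mathbf k})$ is represented, up to the $\underline{\mathbb G}_m^{\Sigma^{(1)}}$-action, by sections $\mathsf s_i \in H^0(\underline{\mathbb P}^1, \mathcal O(r_i))$ with $\mathrm{div}(\mathsf s_i)=w_i$. The tuple $(\mathsf s_i)$ defines a $\underline{\mathbb G}_m$-equivariant morphism $\underline{\mathbb A}^2\setminus 0 \to \underline{\mathbb A}^{\Sigma^{(1)}}$. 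Here $\underline{\mathbb G}_m$ acts on the target through the cocharacter of $\underline{T}_{\mathrm{NS}}$ determined by $\alpha$; the balancing condition $\sum r_i v_i=0$ says exactly that $(r_i)$ lies in $N_1(\underline X)_{\mathbb Z}=\mathrm{Hom}(\Pic,\mathbb Z)\subset \mathbb Z^{\Sigma^{(1)}}$, which is what makes this a cocharacter.

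For the forward direction, suppose $w=\Phi_\alpha(f)$. Pull back the $\underline{T}_{\mathrm{NS}}$-torsor $\pi_X$ along $f$. Since $\underline{T}_{\mathrm{NS}}$ is split, this torsor is the one associated to the line bundles $f^*\mathcal O(\Delta_i)$, and these have degrees $r_i$. So it is the pushout of the $\underline{\mathbb G}_m$-torsor $\underline{\mathbb A}^2\setminus 0\to\underline{\mathbb P}^1$, and we get an equivariant lift $\underline{\mathbb A}^2\setminus 0\to \underline{\mathbb A}^{\Sigma^{(1)}}\setminus Z(I(\Sigma))$. Its coordinates are sections $\mathsf s_i$ of $\mathcal O(r_i)=f^*\mathcal O(\Delta_i)$, namely the pullbacks of the canonical sections $x_i$. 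Hence $\mathrm{div}(\mathsf s_i)=f^*\Delta_i=w_i$, and these sections avoid $Z(I(\Sigma))$ at every point $p$.

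For the converse, suppose $(\mathsf s_i(p))\notin Z(I(\Sigma))$ for every $p$. Then the equivariant map lands in $\underline{\mathbb A}^{\Sigma^{(1)}}\setminus Z(I(\Sigma))$ and descends to a morphism $f:\underline{\mathbb P}^1\to \underline X$. We check three things:
\begin{itemize}
\item $f^*\Delta_i=\mathrm{div}(\mathsf s_i)=w_i$;
\item $\Delta_i.f_*[\underline{\mathbb P}^1]=r_i$ for all $i$, and the $\Delta_i$ span $\Pic$, so the class of $f$ is $\alpha$;
\item a general $p$ lies outside every $\mathrm{Supp}(w_i)$, so all $\mathsf s_i(p)\neq 0$ and $f(p)\in\underline T$.
\end{itemize}
Thus $f\in \underline M^\circ_\alpha$ and $\Phi_\alpha(f)=w$. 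This matches the description in the proof of Lemma~\ref{lemm:isotriviality}.

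The main obstacle is the rigorous identification of the pulled-back $\underline{T}_{\mathrm{NS}}$-torsor with a pushout from $\underline{\mathbb A}^2\setminus 0$. This is where we need $\underline{\mathbb P}^1$ to have trivial Brauer/Picard-type obstructions beyond degree (Cox's functorial description of maps into toric varieties), together with the identification of $x_i$ as the universal section of $\mathcal O(\Delta_i)$. I would cite \cite{Cox95} for this step.

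The reformulation \eqref{eq:supportcondition} is then combinatorial. The point $(\mathsf s_i(p))$ has vanishing set $I_p=\{i: p\in\mathrm{Supp}(w_i)\}$. It lies in $Z(I(\Sigma))$ iff every monomial $x_{\hat\sigma}$ vanishes there, i.e. iff $I_p$ is not contained in any maximal cone $\sigma$. For a smooth (hence simplicial) fan, $I_p$ is contained in some cone iff $\cap_{i\in I_p}\Delta_i\neq\emptyset$. So the first condition says that for every $p$, $\cap_{i\in I_p}\Delta_i\ne\emptyset$. This is equivalent to \eqref{eq:supportcondition}: if $\cap_{i\in I}\mathrm{Supp}(w_i)\ni p$ then $I\subset I_p$, and conversely take $I=I_p$.
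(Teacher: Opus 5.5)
Your proposal is correct and matches the route the paper relies on. The paper states this lemma without a written proof, implicitly using the Cox-ring/universal-torsor description of maps $\underline{\mathbb P}^1 \to \underline{X}$ (\cite{Cox95}, as in the proof of Lemma~\ref{lemm:isotriviality}); you spell that out and correctly translate $(\mathsf s_i(p)) \notin \underline{Z(I(\Sigma))}$ into \eqref{eq:supportcondition}. The step you flag as the main obstacle is not needed: in the ``only if'' direction, $f^*\Delta_i = w_i$ already gives $f(p) \in \cap_{i \in I_p}\Delta_i$ without identifying the pulled-back $\underline{T}_{\mathrm{NS}}$-torsor, and the converse uses only your descent along $\underline{\mathbb A}^2\setminus 0 \to \underline{\mathbb P}^1$ plus the local fact $\pi_X^*\Delta_i = \mathrm{div}(x_i)$.
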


This lemma allows us to extend the definition of $U_{\bold r} \subset H_{\bold r}$ to arbitrary tuples $\bold r$ (which need not satisfy the balancing condition):

\begin{defi}
    For any tuple $\bold r = (r_{i})_{i \in \Sigma^{(1)}}$ of non-negative integers we define $\underline{U}_{\bold r} \subset \underline{H}_{\bold r}$ to be the open subset whose geometric points satisfy condition \eqref{eq:supportcondition}.
\end{defi}

\section{Counting Campana rational curves on toric varieties}
\label{sec:Campana}
In this section, we prove Manin's conjecture for Campana rational curves on split toric varieties. Over number fields, such a statement has been obtained by Alec Shute and Sam Streeter in \cite{SS24}. The motivic version of this statement is obtained in \cite{Faisant}.
The proof here is inspired by the homological sieve method developed in \cite[Section 8]{DLTT25}.

\subsection{Campana curves and counting}
\label{subsec:Campanacurves}

Let $\mathbf k = \mathbb F_q$ be a finite field and $F = \mathbb F_q(t)$. Let $\underline{X}$ be a smooth projective toric variety defined over $\mathbf k$ of dimension $n$ with the open orbit $\underline{T}$ such that the boundary divisor $\Delta = \sum_i \Delta_i$ is a SNC divisor. We also assume that $\underline{X}$ is split and thus defined by a fan $\Sigma$.
For each $i \in \Sigma^{(1)}$ we choose a positive integer $m_i \in \mathbb Z_{>0}$
and let $(X, \Delta_{\bold m})$ be a Campana orbifold with
\[
\Delta_{\bold m} = \sum_i \left( 1- \frac{1}{m_i}\right)\Delta_i.
\]

Let $\alpha$ be a nef class of $1$-cycles on $\underline{X}$.
As in the previous section, we consider the space of rational curves
\[
\underline{M}_\alpha^\circ,
\]
and its $T$-torsor
\[
\Phi_\alpha : \underline{M}_\alpha^\circ \to \underline{U}_{\bold r}.
\]
\begin{defi}
    A rational curve $[f : \underline{\mathbb P}^1 \to \underline{X}] \in \underline{M}_\alpha^\circ$ is a Campana rational curve if the corresponding divisor $w = (w_i) = \Phi_\alpha([f])$ satisfies
    \[
    w_i \geq m_i w_{i, \mathrm{red}},
    \]
    for every $i$, where $w_{i, \mathrm{red}}$ is the reduced divisor underlying $w_i$. We call these inequalities the Campana conditions.
\end{defi}

Let $\underline{U}_{\bold r, \bold m} \subset \underline{U}_{\bold r}$ be the reduced closed subscheme parametrizing divisors satisfying the Campana conditions.
We define
\[
\underline{M}_{\alpha, \bold m}^\circ = \underline{M}_{\alpha}^\circ \times_{\underline{U}_{\bold r}} \underline{U}_{\bold r, \bold m}.
\]
This can be considered as the space of Campana rational curves with class $\alpha$.  The following lemma reduces point counting questions to the study of Hilbert schemes of points on $\underline{\mathbb{P}}^{1}$:

\begin{lemm}
\label{lemm:countingproduct}
    We have
    \[
    \#\underline{M}_{\alpha, \bold m}^\circ(\mathbf k) = (q-1)^n \#\underline{U}_{\bold r, \bold m}(\mathbf k).
    \]
\end{lemm}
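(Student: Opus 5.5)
The plan is to count fibre by fibre over the base change $\Phi_{\alpha}:\underline{M}_{\alpha,\bold m}^\circ \to \underline{U}_{\bold r,\bold m}$, which is a torsor under the split torus $\underline{T}\cong\underline{\mathbb G}_m^n$, and then invoke Lang's theorem (Hilbert 90). The identity then reduces to the statement that every $\mathbf k$-fibre has exactly $(q-1)^n$ points.

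First, by definition $\underline{M}_{\alpha,\bold m}^\circ$ is the fibre product $\underline{M}_{\alpha}^\circ\times_{\underline{U}_{\bold r}}\underline{U}_{\bold r,\bold m}$. Hence its $\mathbf k$-points are exactly the pairs consisting of a point $w\in\underline{U}_{\bold r,\bold m}(\mathbf k)$ and a $\mathbf k$-point of the fibre $\Phi_\alpha^{-1}(w)$. Summing over $w$ gives
\[
\#\underline{M}_{\alpha,\bold m}^\circ(\mathbf k)=\sum_{w\in \underline{U}_{\bold r,\bold m}(\mathbf k)}\#\Phi_\alpha^{-1}(w)(\mathbf k),
\]
so it suffices to show $\#\Phi_\alpha^{-1}(w)(\mathbf k)=(q-1)^n$ for each such $w$.

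Second, Lemma~\ref{lemm:isotriviality}, applied with $\mathbf k'=\mathbf k$, says the fibre over $w$ is isomorphic to $\underline{T}\cong\underline{\mathbb G}_m^n$ over $\mathbf k$. This would immediately give $(q-1)^n$ points.

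The main point to verify is that the fibre really is trivial over $\mathbf k$ itself, i.e. has a $\mathbf k$-point, and is not merely a nontrivial form or torsor. I would check this directly from the Cox description. A $\mathbf k$-point $w=(w_i)$ of $\underline{H}_{\bold r}$ is a tuple of effective divisors on $\underline{\mathbb P}^1$ defined over $\mathbf k$. Each $w_i$ is cut out by a section $\mathsf s_i\in H^0(\underline{\mathbb P}^1,\mathcal O(r_i))$ defined over $\mathbf k$, unique up to $\mathbf k^\times$. By Lemma~\ref{lemm:Ucharacterization}, the tuple $(\mathsf s_i)$ then defines a $\mathbf k$-morphism $\underline{\mathbb P}^1\to\underline{X}$ via $\pi_X$. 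Concretely, since $\underline{X}$ is split, the tuple gives a $\underline{T}_{\mathrm{NS}}$-equivariant map from the total space of $\mathcal O(-1)\setminus 0$ to the universal torsor, which descends over $\mathbf k$.

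So the fibre has a $\mathbf k$-point, and the $\mathbf k$-points of the fibre are the orbit $(\mathbf k^\times)^{\Sigma^{(1)}}/T_{\mathrm{NS}}(\mathbf k)$. Alternatively, one can argue abstractly: $\underline{T}$ is split, so $H^1(\mathbf k,\underline{\mathbb G}_m^n)=0$ by Hilbert 90, and every $\underline{T}$-torsor over $\mathbf k$ is trivial. Either way, the count is $\#\underline{T}(\mathbf k)=(q-1)^n$, using the exact sequence $1\to T_{\mathrm{NS}}\to\mathbb G_m^{\Sigma^{(1)}}\to T\to1$ of split tori and again Hilbert 90 for the surjectivity on $\mathbf k$-points. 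Summing over $w$ completes the argument.
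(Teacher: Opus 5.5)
Your proof is correct and is essentially the paper's argument: the paper deduces the identity directly from Lemma~\ref{lemm:isotriviality}, which says the fibre of $\Phi_\alpha$ over any $\mathbf k$-point $w$ is isomorphic to $\underline{T}\cong\underline{\mathbb G}_m^n$, and sums over $w\in\underline{U}_{\bold r,\bold m}(\mathbf k)$. Your extra check that each fibre has a $\mathbf k$-point (via the Cox description or Hilbert 90 for split tori) just spells out what that lemma already asserts with $\mathbf k'=\mathbf k$.
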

\begin{proof}
    This follows from Lemma~\ref{lemm:isotriviality}.
\end{proof}
We consider the following indicator function $\delta_{\bold m} : \sqcup_{\bold r }H_{\bold r}(\mathbf k) \to \{0, 1\}$:
\[
\delta_{\bold m}(w):= 
\begin{cases}
    1 & \text{ if } w \in \sqcup_{\bold r}U_{\bold r, \bold m} \\
    0 & \text{ otherwise.}
\end{cases}
\]
Note that for $c \in |\mathbb P^1|$, the quantity $\delta_{\bold m}(\bold r c)$ associated to the point $(r_{i}c) \in H_{\bold r |c|}$ does not depend on $c$. Henceforth we simply denote this quantity by $\delta_{\bold m}(\bold r)$.
We have
\[
\#\underline{U}_{\bold r, \bold m}(\bold k) = \sum_{w \in \underline{H}_{\bold r}(\mathbf k)} \delta_{\bold m}(w).
\]
We would like to understand the asymptotic behavior of this counting function. To this end, we consider the following virtual height zeta function: 
\[
\mathsf Z_{\bold m}(\bold t) = \sum_{\bold r} \left(\prod_{i \in \Sigma^{(1)}}q^{-t_ir_i}\right)\#\underline{U}_{\bold r, \bold m}(\bold k) = \sum_{\bold r}\sum_{w \in \underline{H}_{\bold r}(\mathbf k)} \left(\prod_{i\in \Sigma^{(1)}}q^{-t_ir_i}\right)  \delta_{\bold m}(w),
\]
where $\bold r$ runs over all nonnegative tuples.
Since $\delta_{\bold m}$ is multiplicative, the above zeta function is an Euler product: 
\[
\mathsf Z_{\bold m}(\bold t) = \prod_{c \in |\underline{\mathbb P}^1|}\left(\sum_{\bold r}\prod_{i\in \Sigma^{(1)}}q_c^{-t_ir_i}\delta_{\bold m}(\bold r)\right).
\]
We need to understand the pole and its order for this zeta function. To this end, we compare the above virtual height zeta function to the height integral studied in \cite{CLT10} and \cite{PSTVA}.

\subsection{The virtual height zeta functions and the height integrals}
\label{subsec:heights}

Let $\underline{T}(\mathbb A_{F})_{\bold m}$ be the Campana adelic space as in \cite[Section 3.3]{PSTVA}
and denote its indicator function by $\delta_{\bold m} : \underline{T}(\mathbb A_F) \to \{0, 1\}$.
We consider the height integral
\[
\mathcal I(\delta_{\bold m}; (t_i)) = \int_{\underline{T}(\mathbb A_F)} \mathsf H(\bold t, (g_c))^{-1}\delta_{\bold m}((g_c)) \mathrm d\tau_{\underline{T}}
\]
defined in \cite[Section 4.3.2]{CLT10}. This becomes the Euler product
\[
\mathcal I(\delta_{\bold m}; (t_i)) = L_*(1, \mathrm{EP}(\underline{T}))^{-1}\prod_{c \in |\underline{\mathbb P}^1|} \mathcal I_c(\delta_{\bold m, c}; (t_i)),
\]
where $\mathcal I_c(\delta_{\bold m, c}; (t_i))$ is defined as
\[
\mathcal I_c(\delta_{\bold m, c}; (t_i)) = \int_{\underline{T}(F_c)} \mathsf H_c(\bold t, g_c)^{-1} \delta_{\bold m, c}(g_c)L_c(1, \mathrm{EP}(\underline{T})) \mathrm d\tau_c.
\]
By Denef's formula (\cite[Proposition 4.5]{CLT10}, see also \cite[Theorem 7.1]{PSTVA}), we have 
\[
\mathcal I_c(\delta_{\bold m, c}; (t_i)) = \left(\frac{q_c}{q_c-1}\right)^n\sum_{A \subset \Sigma^{(1)}} q_c^{-n} \#\underline{\Delta}_A^\circ(\mathbf k_c) \prod_{i \in A}(q_c-1)\frac{q_c^{-m_it_i}}{1-q_c^{-t_i}},
\]
where $n = \dim \, \underline{X}$, $\underline{\Delta}_A^\circ$ is the open stratum of $\underline{\Delta}_A = \cap_{i \in A} \underline{\Delta}_i$.
Since $\underline{\Delta}_A^\circ$ is isomorphic to a split torus, the above local integral becomes
\[
\mathcal I_c(\delta_{\bold m, c}; (t_i)) = \sum_{A \subset \Sigma^{(1)}}  \delta'(A) \prod_{i \in A}\left(\sum_{r_i = m_i}^{\infty}q_c^{-r_it_i}\right),
\]
where $\delta'(A)$ is defined by
\[
\delta'(A) = 
\begin{cases}
1 & \text{ if } \underline{\Delta}_A\neq \emptyset\\
0 & \text{ otherwise}.
\end{cases}
\]
Then $\delta'(A) = 1$ holds if and only if there exists a cone $\sigma \in \Sigma$ such that $A = \sigma^{(1)}$. This observation enables us to show 
\[
\mathcal I_c(\delta_{\bold m, c}; (t_i)) = \sum_{\bold r}\prod_{\i \in \Sigma^{(1)}}q_c^{-t_ir_i}\delta_{\bold m}(\bold r).
\]
Using this we have
\begin{prop}
\label{prop:trivialcharacter}
    There exists a positive constant $\epsilon > 0$ that the function 
    \[
    \left(\prod_{i\in \Sigma^{(1)}}(1-q^{-(m_it_i -1)})\right)\mathsf Z_{\bold m}(\bold t)
    \]
    admits a holomorphic continuation to the domain $\mathsf T_{> -\epsilon}$ defined by $$\Re(t_i)\geq\frac{1}{m_i}- \epsilon,$$ for every $i$, and moreover it satisfies
    \[
    \lim_{t_i \to \frac{1}{m_i}}\left(\prod_{i\in \Sigma^{(1)}}(1-q^{-(m_it_i-1)
})\right)\mathsf Z_{\bold m}(\bold t) = \left(\frac{q}{q-1}\right)^{n} \int_{\underline{X}(\mathbb A_{F})_{\bold m}}\mathsf H(\Delta_m, x) \mathrm d\tau_{\underline{X}},
    \]
    where $\underline{X}(\mathbb A_{F})_{\bold m}$ is the Campana adelic space as in \cite[Section 3.3]{PSTVA} and $\mathsf H(\Delta_m, x)$ is the height function associated to $\Delta_m$. 
\end{prop}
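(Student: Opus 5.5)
We compare the Euler product for $\mathsf Z_{\bold m}(\bold t)$ directly with the Euler product for the height integral. By the identity established just before the statement, the local factor of $\mathsf Z_{\bold m}$ at $c$ equals $\mathcal I_c(\delta_{\bold m,c};(t_i))$. Hence
\[
\mathsf Z_{\bold m}(\bold t) = \prod_{c} \mathcal I_c(\delta_{\bold m,c};(t_i)) = L_*(1,\mathrm{EP}(\underline{T}))\,\mathcal I(\delta_{\bold m};(t_i)),
\]
at least where both sides converge absolutely. Since $\underline{T}$ is split of dimension $n$, $\mathrm{EP}(\underline{T})$ is the trivial module of rank $n$. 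Thus $L(t,\mathrm{EP}(\underline T)) = \zeta_{\underline{\mathbb P}^1}(t)^n$, and $L_*(1,\mathrm{EP}(\underline{T})) = (1-q^{-1})^{-n} = (q/(q-1))^n$. This explains the factor $(q/(q-1))^n$ in the statement.

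Next we extract the poles. Using the formula $\mathcal I_c = \sum_{\sigma\in\Sigma}\prod_{i\in\sigma^{(1)}} \frac{q_c^{-m_it_i}}{1-q_c^{-t_i}}$, we write
\[
\mathcal I_c = 1 + \sum_i q_c^{-m_it_i} + (\text{terms with } \Re \geq \min(2\min_i m_i\Re t_i,\ \min_i (m_i+1)\Re t_i)).
\]
Multiplying by $\prod_i (1-q_c^{-m_it_i})$ gives local factors of the form $1+O(q_c^{-(1+\delta)})$ uniformly on $\Re(t_i)\ge 1/m_i-\epsilon$ for small $\epsilon$. For the cross terms we use $m_i\ge1$: they have exponent at least about $2$ or $1+1/m_i$. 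The product over $c$ of these factors therefore converges absolutely and is holomorphic there. Meanwhile $\prod_c\prod_i(1-q_c^{-m_it_i})^{-1} = \prod_i \zeta_{\underline{\mathbb P}^1}(m_it_i)$, and $\zeta_{\underline{\mathbb P}^1}(s)(1-q^{-(s-1)}) = (1-q^{-s})^{-1}$ is holomorphic for $\Re s>0$. Multiplying by $\prod_i(1-q^{-(m_it_i-1)})$ thus yields the holomorphic continuation to $\mathsf T_{>-\epsilon}$.

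The main obstacle is the uniformity in the regrouping. The naive bound fails when $m_i=1$, since then the terms $q_c^{-(m_i+1)t_i}$ and the quadratic terms from the rational function $\frac{q_c^{-m_it_i}}{1-q_c^{-t_i}}$ must be checked to stay off the line $\Re=1$. We verify this by noting that every term other than $1$ and $q_c^{-m_it_i}$ carries total exponent $\sum_{i\in A} r_it_i$ with either $|A|\ge2$ and $r_i\ge m_i$, or $r_i\ge m_i+1$. In both cases the real part exceeds $1+\min(1/\max m_i,1)-O(\epsilon)$. We also need the standard $(2\pi\sqrt{-1}/\log q)$-periodicity to see that no other poles enter the region of real parts $\geq 1/m_i-\epsilon$ beyond those on the lines already removed. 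This is harmless, because the statement only asks for holomorphy after multiplying by $\prod_i(1-q^{-(m_it_i-1)})$, which vanishes at all the translates.

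Finally, for the limit, the left side at $t_i\to1/m_i$ equals $(q/(q-1))^n\prod_c (1-q_c^{-1})^{|\Sigma^{(1)}|}\,\mathcal I_c(\delta_{\bold m,c};(1/m_i))$ up to the normalization of $\zeta$ residues. We compute $\mathsf H(\bold t,\cdot)^{-1}$ at $t_i=1/m_i$: since $\|\omega\|_c^{-1}$ contributes $\prod_i\|\mathsf s_i\|_c$, the integrand $\mathsf H(\bold t)^{-1}\mathrm d\tau_c$ becomes $\mathsf H_c(\Delta_{\bold m})\,\mathrm d\tau_c$ in the normalization of \cite{PSTVA}, with $-K-\Delta_{\bold m}=\sum\frac1{m_i}\Delta_i$. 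The regularizing factors $\prod_c(1-q_c^{-1})^{\rho}$ matched against the convergence factors $L_c(1,\mathrm{EP})$ and $L_*$ then give exactly the Tamagawa measure of Definition 2.8 on $\underline X$. We will check this bookkeeping carefully: rank of $\Pic$ equals $|\Sigma^{(1)}|-n$, while $|\Sigma^{(1)}|$ factors are removed and $n$ are absorbed into $L_*(1,\mathrm{EP}(\underline T))$. This yields $(q/(q-1))^n\int_{\underline X(\mathbb A_F)_{\bold m}}\mathsf H(\Delta_{\bold m},x)\,\mathrm d\tau_{\underline X}$, as stated.
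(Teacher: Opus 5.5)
Your proposal is correct and follows essentially the paper's argument. You factor out $\prod_i \zeta_{\underline{\mathbb P}^1}(m_it_i)$, check that the remaining local factors are $1+O(q_c^{-(1+\epsilon')})$ uniformly on $\mathsf T_{>-\epsilon}$ (in more detail than the paper, which calls this easy to see), and evaluate the limit via $\mathsf Z_{\mathbf m}=L_*(1,\mathrm{EP}(\underline T))\,\mathcal I(\delta_{\mathbf m};\mathbf t)$ with $L_*(1,\mathrm{EP}(\underline T))=(q/(q-1))^n$. The only difference is that the paper cites the proof of \cite[Lemma 9.3]{PSTVA} for the final Tamagawa bookkeeping, while you do it by hand; there, the global factor you defer as ``normalization of $\zeta$ residues'' is exactly $(1-q^{-1})^{-\rho(\underline X)}=L_*(1,\mathrm{EP}(\underline X))^{-1}$, and with it the count comes out as stated.
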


\begin{proof}
   
    For the first statement, it suffices to show that the function
    \[
    \left(\prod_{i \in \Sigma^{(1)}} \zeta_{\underline{\mathbb P}^1}(m_it_i)^{-1}\right) \mathsf Z_{\bold m}(\bold t),
    \]
    is holomorphic in the domain $\mathsf T_{> -\epsilon}$.
     This claim follows from the fact that for a sufficiently small $\epsilon > 0$, there exists $\epsilon' > 0$ such that when $\bold t \in \mathsf T_{> -\epsilon}$, for every $c \in |\underline{\mathbb P}^1|$ we have
    \[
    \left(\prod_{i\in \Sigma^{(1)}} (1-q_c^{-m_it_i})\right)\left(\sum_{\bold r}\prod_{i \in \Sigma^{(1)}}q_c^{-t_ir_i}\delta_{\bold m}(\bold r)\right) = 1 + O(q_c^{-(1 + \epsilon')}).
    \]
    Thus the Euler product for $\left(\prod_{i\in \Sigma^{(1)}} \zeta_{\underline{\mathbb P}^1}(m_it_i)^{-1}\right) \mathsf Z_{\bold m}(\bold t)$ converges for $t_{i}$ in this range.
      This is easy to see from the expression.

 For the second statement, one can compute as
 \begin{align*}
& \lim_{t_i \to \frac{1}{m_i}}\left(\prod_{i\in \Sigma^{(1)}}(1-q^{-(m_it_i-1)
})\mathsf Z_{\bold m}(\bold t)\right) \\  & = \lim_{t_i \to \frac{1}{m_i}}\left(\prod_{i\in \Sigma^{(1)}}(1-q^{-(m_it_i-1)
})\zeta_{\underline{\mathbb P}^1}(m_it_i)\right) L_*(1, \mathrm{EP}(\underline{T}))\prod_{i\in \Sigma^{(1)}} \zeta_{\underline{\mathbb P}^1}(m_it_i)^{-1} \mathcal I(\delta_{\bold m}; (t_i)) \\
& = \left(\frac{q}{q-1}\right)^{n}\int_{\underline{X}(\mathbb A_{F})_{\bold m}}\mathsf H(\Delta_m, x) \mathrm d\tau_{\underline{X}},
 \end{align*}
 where the last equality follows from the proof of \cite[Lemma 9.3]{PSTVA} with $L = -(K_{\underline{X}} + \Delta_{\bold m})$. 
\end{proof}

\subsection{Twisted virtual height zeta functions}

Proposition~\ref{prop:trivialcharacter} is enough to deduce the usual Manin's conjecture for split smooth projective toric varities over $\mathbb F_q(t)$ (which corresponds to $m_i = 1$ for every $i$).  However, it is not quite enough to deduce Manin's conjecture when $m_i > 1$. 
Indeed, when $m_i = 1$ for every $i$, one may apply the counting argument of \cite[Proposition 8.8]{DLTT25}. However, this argument does not work when $m_i > 1$ for some $i$ and one needs to consider the virtual height zeta functions twisted by characters as below. 

Set $m = \mathrm{lcm}_i \{m_i\}$.
For each $i$ let $b_i$ be an integer such that $0 \leq b_i < m$ and let $\bold b = (b_i)$. We consider the following character $$\chi_{\bold b}(-, \bold r) : \prod_{i \in \Sigma^{(1)}} \mathbb Z/m \mathbb Z  \to \mathbb C^\times$$ defined by
\[
\chi_{\bold b}(\bold a, \bold r) = \prod_i\exp\left(\frac{2\pi\sqrt{-1}a_i(r_i - b_i)}{m_i}\right),
\]
where $\bold a = (a_i) \in \prod_i \mathbb Z/m \mathbb Z$.  Let us fix $\bold a, \bold b$. We define the twisted virtual height zeta function by
\[
\mathsf Z_{\bold a, \bold b}(\bold t) = \sum_{\bold r} \left(\prod_{i\in \Sigma^{(1)}}q^{-t_ir_i}\right)\chi_{\bold b}(\bold a, \bold r)\#\underline{U}_{\bold r, \bold m}(\bold k).
\]
Using multiplicativity, this is equal to
\[
\chi_{\bold b}(\bold a, \bold 0)\prod_{c \in |\underline{\mathbb P}^1|}\left(\sum_{\bold r}\prod_{i \in \Sigma^{(1)}}q_c^{-t_ir_i} \chi_{\bold 0}(\bold a, \bold r)^{|c|}\delta_{\bold m}(\bold r)\right).
\]
As in Section~\ref{subsec:heights}, we express this function as a certain height integral. To this end, we define global characters on $\underline{T}(\mathbb A_F)$.
Let $c \in |\underline{\mathbb P}^1|$. For $g_c \in \underline{T}(F_c)$, 
we consider $g_c$ as a jet on $\underline{X}$. Then 
we define $\widetilde{\chi}_{\bold 0, c}(\bold a, -) : \underline{T}(F_c) \to \mathbb C^\times$ as
\[
\widetilde{\chi}_{\bold 0, c}(\bold a, g_c) =  \chi_{\bold 0}(\bold a, \bold r)^{|c|},
\]
with $r_i = v_c(g_c^*\Delta_i)$ where $v_c$ is the discrete valuation with respect to $c$.
We also define the global character $\widetilde{\chi}_{\bold b}(\bold a, -) : \underline{T}(\mathbb A_F) \to \mathbb C^\times$ by
\[
\widetilde{\chi}_{\bold b}(\bold a, (g_c)) = \chi_{\bold b}(\bold a, \bold 0)\prod_{c \in |\underline{\mathbb P}^1|} \widetilde{\chi}_{\bold 0, c}(\bold a, g_c).
\]
Now we define the twisted height integral by
\[
\mathcal I_{\bold a, \bold b}(\delta_{\bold m}; \bold t) = \int_{\underline{T}(\mathsf A_F)} \widetilde{\chi}_{\bold b}(\bold a, (g_c))\mathsf H(\bold t, (g_c))^{-1}\delta_{\bold m}((g_c))\mathrm d\tau_{\underline{T}}.
\]
This becomes the following Euler product:
\[
\mathcal I_{\bold a, \bold b}(\delta_{\bold m}; \bold t) = \chi_{\bold b}(\bold a, \bold 0) L_*(1, \mathrm{EP}(\underline{T}))^{-1} \prod_{c \in |\underline{\mathbb P}^1|}\mathcal I_{\bold a, c}(\delta_{\bold m, c}; \bold t), 
\]
where the twisted local height integral $\mathcal I_{\bold a, c}(\delta_{\bold m, c}; \bold t)$ is given by
\[
\mathcal I_{\bold a, c}(\delta_{\bold m, c}; \bold t) = \int_{\underline{T}(F_c)} \widetilde{\chi}_{\bold 0, c}(\bold a, g_c)\mathsf H_c(\bold t, g_c)^{-1}\delta_{\bold m, c}(g_c) L_c(1, \mathrm{EP}(\underline{T})) \mathrm d \tau_c.
\]
The computation of \cite[Theorem 7.1]{PSTVA} shows that we have
\[
\mathcal I_{\bold a, c}(\delta_{\bold m, c}; \bold t) = 
\left(\frac{q_c}{q_c-1}\right)^n\sum_{A \subset \Sigma^{(1)}} q_c^{-n} \#\underline{\Delta}_A^\circ(\mathbf k_c) \prod_{i \in A}(q_c-1)\frac{q_c^{-m_it_i}}{1-\chi_{\bold 0}(\bold a, \bold e^i)^{|c|}q_c^{-t_i}},
\]
where $\bold e^i$ is defined by $e^i_i = 1$ and $e^i_j = 0$ for $j \neq i$.
Then as in Section~\ref{subsec:heights}, this becomes
\begin{align*}
\mathcal I_{\bold a, c}(\delta_{\bold m, c}; \bold t)
&= \sum_{A \subset \Sigma^{(1)}}  \delta'(A) \prod_{i \in A}\left(\sum_{r_i = m_i}^{\infty}\chi_{\bold 0}(\bold a, \bold e^i)^{|c|r_i}q_c^{-r_it_i}\right) \\
& = \sum_{\bold r}\prod_{i \in \Sigma^{(1)}}q_c^{-t_ir_i} \chi_{\bold 0}(\bold a, \bold r)^{|c|}\delta_{\bold m}(\bold r).
\end{align*}
Thus we have
\begin{prop}
\label{prop:nontrivialcharacter}
    There exists a positive constant $\epsilon > 0$ that the function
    \[
    \left(\prod_{i \in \Sigma^{(1)}}(1-q^{-(m_it_i -1)})\right)\mathsf Z_{\bold a, \bold b}(\bold t)
    \]
    admits a holomorphic continuation to the domain $\mathsf T_{> -\epsilon}$, and moreover it satisfies 
    \begin{align*}
    &\lim_{t_i \to \frac{1}{m_i}}\left(\prod_{i\in \Sigma^{(1)}}(1-q^{-(m_it_i-1)
})\right)\mathsf Z_{\bold a, \bold b}(\bold t) \\
&= \chi_{\bold b}(\bold a, \bold 0)\left(\frac{q}{q-1}\right)^{\# \Sigma^{(1)}} \prod_{c \in |\underline{\mathbb P}^1|}\int_{\underline{T}(F_c)_{\bold m}}\widetilde{\chi}_{\bold 0, c}(\bold a, g_c)\mathsf H_c(\Delta_m, g_c)L_c(1, \mathrm{EP}(\underline{X})) \mathrm d\tau_{c}.
    \end{align*}
 We denote this leading constant as $\left(\frac{q}{q-1}\right)^{n} \chi_{\bold b}(\bold a, \bold 0)\tau_{\bold a}$.   
\end{prop}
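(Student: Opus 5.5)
The plan is to copy the proof of Proposition~\ref{prop:trivialcharacter}, using the local identity established just above,
\[
\mathcal I_{\bold a, c}(\delta_{\bold m, c}; \bold t) = \sum_{\bold r}\prod_{i}q_c^{-t_ir_i}\chi_{\bold 0}(\bold a, \bold r)^{|c|}\delta_{\bold m}(\bold r).
\]
This identity says that the Euler factors of $\mathsf Z_{\bold a, \bold b}(\bold t)$ coincide with the twisted local height integrals. Hence
\[
\mathsf Z_{\bold a, \bold b}(\bold t) = L_*(1, \mathrm{EP}(\underline{T}))\,\mathcal I_{\bold a, \bold b}(\delta_{\bold m}; \bold t).
\]
It therefore suffices to study the Euler product $\chi_{\bold b}(\bold a,\bold 0)\prod_c \mathcal I_{\bold a,c}$.

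\emph{Holomorphic continuation.} For each $c$, expand the local factor using $\delta'(A)$. Only $A=\emptyset$ and the one-element sets $A=\{i\}$ (every ray is a cone) contribute terms of size larger than $q_c^{-(1+\epsilon')}$ when $\Re(t_i)\ge 1/m_i-\epsilon$. Sets $A$ with $|A|\ge 2$ contribute $O(q_c^{-\sum_{i\in A} m_i\Re t_i}) = O(q_c^{-2+2\epsilon'})$. The leading term of the factor for $\{i\}$ is $\chi_{\bold 0}(\bold a,\bold e^i)^{|c|m_i}q_c^{-m_it_i}$. Since $\chi_{\bold 0}(\bold a,\bold e^i)=\exp(2\pi\sqrt{-1}a_i/m_i)$, its $m_i$-th power is $1$. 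This is the key point: the twist disappears in the leading term. Consequently
\[
\Bigl(\prod_i(1-q_c^{-m_it_i})\Bigr)\mathcal I_{\bold a,c}(\delta_{\bold m,c};\bold t) = 1+O(q_c^{-(1+\epsilon')})
\]
uniformly on $\mathsf T_{>-\epsilon}$. The next term has exponent $(m_i+1)t_i$, of real part about $1+1/m_i>1$. So $\prod_i\zeta_{\underline{\mathbb P}^1}(m_it_i)^{-1}\mathsf Z_{\bold a,\bold b}(\bold t)$ is an absolutely convergent Euler product of holomorphic functions there, hence holomorphic. Since
\[
(1-q^{-(m_it_i-1)})\zeta_{\underline{\mathbb P}^1}(m_it_i) = (1-q^{-m_it_i})^{-1}
\]
is holomorphic near $\Re(t_i)=1/m_i$, the first assertion follows after shrinking $\epsilon$ if necessary. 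The only other potential poles are at $m_it_i-1 \in 2\pi\sqrt{-1}\mathbb Z/\log q$, and these are cancelled by the same factor.

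\emph{The limit.} As $t_i\to 1/m_i$, each factor $(1-q^{-(m_it_i-1)})\zeta_{\underline{\mathbb P}^1}(m_it_i)$ tends to $(1-q^{-1})^{-1}=q/(q-1)$. This gives the factor $(q/(q-1))^{\#\Sigma^{(1)}}$. What remains is
\[
\chi_{\bold b}(\bold a,\bold 0)\prod_c \Bigl[(1-q_c^{-1})^{\#\Sigma^{(1)}}\mathcal I_{\bold a,c}(\delta_{\bold m,c};1/\bold m)\Bigr],
\]
a convergent product by the estimate above. I then rewrite each local factor as the integral over $\underline{T}(F_c)_{\bold m}$ of $\widetilde\chi_{\bold 0,c}\mathsf H_c(\Delta_m,\cdot)\,L_c(1,\mathrm{EP}(\underline X))\,\mathrm d\tau_c$. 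This uses the identity $L_c(1,\mathrm{EP}(\underline T))\,(1-q_c^{-1})^{\#\Sigma^{(1)}} = L_c(1,\mathrm{EP}(\underline X))$, which comes from $\mathrm{EP}(\underline T)=[M]-0$ and $\mathrm{EP}(\underline X)=-[\Pic]$ with $\mathrm{rk}\,M+\mathrm{rk}\,\Pic=\#\Sigma^{(1)}$. The height $\mathsf H_c(1/\bold m, g_c)^{-1}$ is identified with $\mathsf H_c(\Delta_m, g_c)$ as in \cite[Lemma 9.3]{PSTVA}.

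\emph{Main obstacle.} The delicate part is this bookkeeping of the normalizations: the factor $L_c(1,\mathrm{EP})$, the powers of $q/(q-1)$ (the statement writes $\#\Sigma^{(1)}$ but then abbreviates the constant with exponent $n$), and the Campana height exponent. I would check these carefully against the computation in \cite[Lemma 9.3]{PSTVA}, exactly as was done in Proposition~\ref{prop:trivialcharacter}.
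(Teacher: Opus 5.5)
Your proposal is correct and is essentially the paper's proof, which just says to repeat the argument of Proposition~\ref{prop:trivialcharacter}. Your observation that $\chi_{\bold 0}(\bold a,\bold e^i)^{m_i|c|}=1$ is exactly why that argument carries over unchanged: the twist drops out of the leading Euler term $q_c^{-m_it_i}$ and first appears at order $q_c^{-(m_i+1)t_i}$. As for the bookkeeping you flag, the mismatch between $(q/(q-1))^{\#\Sigma^{(1)}}$ and $(q/(q-1))^{n}$ is the global factor $L_*(1,\mathrm{EP}(\underline X))^{-1}=(q/(q-1))^{\#\Sigma^{(1)}-n}$ absorbed into the definition of $\tau_{\bold a}$, and $\mathsf H_c(1/\bold m,\cdot)^{-1}$ equals $\mathsf H_c(\Delta_m,\cdot)$ only after multiplying by $\mathsf H_c(-K_{\underline X},\cdot)$, which is the density of the Haar measure $|\omega|$ (the measure the local Denef-type formulas actually integrate against) with respect to $\tau_c=|\omega|/\|\omega\|_c$.
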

\begin{proof}
    The proof is similar to the proof of Proposition~\ref{prop:trivialcharacter}.
\end{proof}

Recall that we have
\[
\mathsf Z_{\bold a, \bold b}(\bold t) = \sum_{\bold r} \left(\prod_{i\in \Sigma^{(1)}}q^{-t_ir_i}\right)\chi_{\bold b}(\bold a, \bold r)\#\underline{U}_{\bold r, \bold m}(\bold k).
\]
We fix $\bold b$ and we write $\bold r \equiv \bold b \mod  \bold m$ if $r_i \equiv b_i \mod m_i$ for every $i$. 
Then the orthogonality relation shows that
\[
 \frac{1}{m^{\#\Sigma^{(1)}}}\sum_{\bold a  \in \prod_i \mathbb Z/m \mathbb Z}\mathsf Z_{\bold a, \bold b}(\bold t)
= \sum_{\bold r \equiv \bold b \mod  \bold m} \left(\prod_{i\in \Sigma^{(1)}}q^{-t_ir_i}\right)\#\underline{U}_{\bold r, \bold m}(\bold k) := \mathsf Z_{\bold m, \bold b}(\bold t).
\]
Using this we have the following proposition:
\begin{prop}
\label{prop:counting}
    There exists $\eta > 0$ such that assuming $\bold r \equiv \bold b \mod \bold m$, we have
    \[
    \frac{\#\underline{U}_{\bold r, \bold m}(\bold k)}{q^{\sum_i \frac{r_i}{m_i}}} = \frac{q^n}{(q-1)^nm^{\#\Sigma^{(1)}}}\sum_{\bold a  \in \prod_i \mathbb Z/m \mathbb Z} \chi_{\bold b}(\bold a, \bold 0)\tau_{\bold a} + O(q^{-\eta\min_i\{r_i\}}).
    \]
\end{prop}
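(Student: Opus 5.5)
The plan is to extract the coefficients of the multivariable generating series $\mathsf Z_{\bold m, \bold b}(\bold t)$ by a multivariable Cauchy integral (a Tauberian argument over function fields), one variable at a time. Put $u_i = q^{-t_i}$, so that $\mathsf Z_{\bold m, \bold b}$ becomes a power series $\sum_{\bold r \equiv \bold b} \#\underline{U}_{\bold r, \bold m}(\mathbf k)\prod_i u_i^{r_i}$. By Proposition~\ref{prop:nontrivialcharacter} and the orthogonality relation,
\[
\mathsf Z_{\bold m, \bold b} = \frac{1}{m^{\#\Sigma^{(1)}}}\sum_{\bold a}\mathsf Z_{\bold a, \bold b}.
\]
Each product $\prod_i (1-q^{-(m_it_i-1)})\mathsf Z_{\bold a, \bold b}$ is holomorphic on the polydisc $|u_i| \le q^{-1/m_i+\epsilon}$. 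Hence
\[
G(\bold u) := \prod_i (1 - q u_i^{m_i})\,\mathsf Z_{\bold m, \bold b}(\bold u)
\]
is holomorphic on a polydisc strictly larger than $|u_i| \le q^{-1/m_i}$. Its only singularities on the closed polydisc of radius $q^{-1/m_i}$ therefore come from the factors $(1-qu_i^{m_i})^{-1}$, which have simple poles at the $m_i$ points $u_i = \zeta q^{-1/m_i}$ with $\zeta^{m_i}=1$.

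I will expand
\[
\prod_i (1-qu_i^{m_i})^{-1} = \sum_{\bold k}\prod_i q^{k_i}u_i^{m_ik_i}.
\]
The coefficient of $\bold u^{\bold r}$ in $\mathsf Z_{\bold m,\bold b}$, for $\bold r \equiv \bold b \bmod \bold m$, is then
\[
\sum_{\bold s \le \bold r,\ \bold s\equiv \bold r \bmod \bold m} g_{\bold s}\, q^{\sum_i (r_i-s_i)/m_i},
\]
where $g_{\bold s}$ denotes the Taylor coefficients of $G$. Dividing by $q^{\sum_i r_i/m_i}$ gives
\[
\sum_{\bold s} g_{\bold s}\, q^{-\sum_i s_i/m_i},
\]
truncated to $s_i \le r_i$ and $s_i \equiv r_i \bmod m_i$. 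Cauchy estimates on the larger polydisc give $|g_{\bold s}| \ll q^{\sum_i s_i(1/m_i-\epsilon)}$. The full sum therefore converges absolutely, and its tail beyond $s_i \le r_i$ is $O(q^{-\epsilon \min_i r_i})$.

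The full congruence-restricted sum equals the average of $G$ over the points $u_i = \zeta_i q^{-1/m_i}$ with $\zeta_i$ running over the $m_i$-th roots of unity, weighted by the characters $\zeta_i^{-b_i}$. The $\zeta_i^{-b_i}$ weighting is exactly the discrete orthogonality on $\mathbb Z/m_i$, and the average carries a factor $1/\prod_i m_i$. Each point $\zeta_i q^{-1/m_i}$ corresponds to shifting $t_i$ by $2\pi\sqrt{-1}\,k/(m_i\log q)$. Substituting the decomposition of $\mathsf Z_{\bold m, \bold b}$ into the $\mathsf Z_{\bold a, \bold b}$, such a shift acts on the twisted zeta functions by changing $\bold a$. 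Collecting terms, the main term becomes
\[
\frac{1}{m^{\#\Sigma^{(1)}}}\sum_{\bold a}\chi_{\bold b}(\bold a,\bold 0)\left(\frac{q}{q-1}\right)^n\tau_{\bold a},
\]
as claimed, using the limits computed in Proposition~\ref{prop:nontrivialcharacter}.

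The main obstacle I expect is this last bookkeeping step. One has to check that evaluating at the shifted poles $t_i = 1/m_i + 2\pi\sqrt{-1}k/(m_i \log q)$ is the same as replacing $\bold a$ by $\bold a + \bold k$ up to the factor $\chi_{\bold b}$. The $r_i$-dependence of $\chi_{\bold b}(\bold a,\bold r)$ is precisely multiplication of $u_i$ by $\exp(2\pi\sqrt{-1}a_i/m_i)$, and the $b_i$-dependence is a constant. One must also normalize the residue factor $(1-qu_i^{m_i})$ against $(1-q^{-(m_it_i-1)})$ consistently, so that no extra factor of $m_i$ or $\prod_i m_i$ is introduced. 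Finally, $\eta$ can be taken to be any positive number smaller than the $\epsilon$ coming from Proposition~\ref{prop:nontrivialcharacter}.
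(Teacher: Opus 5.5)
Your argument is correct and is essentially the paper's proof. Like the paper, you expand $\prod_i(1-q^{-(m_it_i-1)})\mathsf Z_{\bold m,\bold b}$ as a power series, bound its coefficients using the holomorphic continuation from Proposition~\ref{prop:nontrivialcharacter}, multiply back by the geometric series, compare coefficients, and bound the tail to get the $O(q^{-\eta\min_i r_i})$ error. The roots-of-unity averaging in your last step is harmless but unnecessary, so the ``main obstacle'' you anticipate does not arise. Every monomial of $G$ already has exponent $\equiv \bold b \bmod \bold m$, so the full sum is simply $G$ evaluated at $t_i = 1/m_i$. That value is $m^{-\#\Sigma^{(1)}}\sum_{\bold a}$ of the limits in Proposition~\ref{prop:nontrivialcharacter}, which is exactly how the paper reads off the constant.
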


\begin{proof}
    The following counting argument comes from \cite[Proposition 8.8]{DLTT25}. 
    We expand the product: 
    \[
   \left( \prod_{i \in \Sigma^{(1)}} (1-q^{-(m_it_i-1)})\right) \mathsf Z_{\bold m, \bold b}(\bold t) = \sum_{\bold j \equiv \bold b} B_{\bold j}\prod_{i\in \Sigma^{(1)}} q^{-j_it_i}.
    \]
    Then it follows from Proposition~\ref{prop:nontrivialcharacter} that this absolutely converges in the domain $\mathsf T_{>-\epsilon}$. This implies that we have
    \[
    B_{\bold j} = o(q^{\sum_i \frac{j_i}{m_i}-\eta\sum_i j_i}),
    \]
    for any $0 < \eta < \epsilon$.
    Now $ \mathsf Z_{\bold m, \bold b}(\bold t)$ can be obtained from the power series $\sum_{\bold j \equiv  \bold b} B_{\bold j}\prod_{i} q^{-j_it_i}$ by multiplying by $\prod_i (1-q^{-(m_it_i-1)})^{-1}$. By comparing coefficients, we have
    \[
    \frac{\#\underline{U}_{\bold r, \bold m}(\bold k)}{q^{\sum_i \frac{r_i}{m_i}}} = \sum_{j_i \leq r_i, \, \bold j \equiv \bold b} B_{\bold j} \prod_{i \in \Sigma^{(1)}} q^{-\frac{j_i}{m_i}}.
    \]
    Thus it follows from Proposition~\ref{prop:nontrivialcharacter} that
    \[
    \lim_{r_i \to \infty}\frac{\#\underline{U}_{\bold r, \bold m}(\bold k)}{q^{\sum_i \frac{r_i}{m_i}}} = \sum_{\bold j\equiv \bold b}B_{\bold j} \prod_{i\in \Sigma^{(1)}} q^{-\frac{j_i}{m_i}} = \frac{q^n}{(q-1)^nm^{\#\Sigma^{(1)}}}\sum_{\bold a  \in \prod_i \mathbb Z/m \mathbb Z} \chi_{\bold b}(\bold a, \bold 0)\tau_{\bold a}.
    \]
    The error term is given by
    \[
    \sum_i \sum_{j_i > r_i}|B_{\bold j }|\prod_{i\in \Sigma^{(1)}} q^{-\frac{j_i}{m_i}} = \sum_i O(q^{-\eta j_i}).
    \]
\end{proof}

As a corollary, we have

\begin{coro}
\label{coro:uniformbound}
    There exists some uniform constant $\mathsf C > 0$ only depending on $\underline{X}$ and $\bold m$ such that we have
    \[
    \#\underline{U}_{\bold r, \bold m}(\bold k) \leq \mathsf C q^{\sum_i \frac{r_i}{m_i}}.
    \]
\end{coro}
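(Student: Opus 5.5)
The plan is to read the bound directly off Proposition~\ref{prop:counting}, applied separately to each residue class $\bold b$, and then take the maximum over the finitely many classes. Fix $\bold r$ and let $\bold b$ be the tuple with $0 \le b_i < m$ and $b_i \equiv r_i \bmod m_i$ (for instance $b_i$ the least nonnegative residue of $r_i$ modulo $m_i$). There are at most $m^{\#\Sigma^{(1)}}$ such $\bold b$. For each one, Proposition~\ref{prop:counting} gives
\[
\frac{\#\underline{U}_{\bold r, \bold m}(\bold k)}{q^{\sum_i r_i/m_i}} = \frac{q^n}{(q-1)^n m^{\#\Sigma^{(1)}}}\sum_{\bold a} \chi_{\bold b}(\bold a, \bold 0)\tau_{\bold a} + O(q^{-\eta \min_i r_i}).
\]
The main term is bounded independently of $\bold r$: the $\tau_{\bold a}$ are finitely many finite complex numbers, being values of the convergent limits in Proposition~\ref{prop:nontrivialcharacter}. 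The error term is at most a constant, since $\min_i r_i \ge 0$.

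The main point to check is that the implied constant in the $O$-term is genuinely uniform in $\bold r$, and is not only an asymptotic statement as all $r_i \to \infty$. To see this, I will go back to the proof of Proposition~\ref{prop:counting} and use the exact coefficient identity
\[
\frac{\#\underline{U}_{\bold r,\bold m}(\bold k)}{q^{\sum_i r_i/m_i}} = \sum_{j_i\le r_i,\ \bold j\equiv \bold b} B_{\bold j}\prod_i q^{-j_i/m_i}.
\]
From this,
\[
\left|\frac{\#\underline{U}_{\bold r,\bold m}(\bold k)}{q^{\sum_i r_i/m_i}}\right| \le \sum_{\bold j} |B_{\bold j}|\prod_i q^{-j_i/m_i}.
\]
The right-hand side is finite, because the series $\sum_{\bold j} B_{\bold j}\prod_i q^{-j_i t_i}$ converges absolutely at the real point $t_i = 1/m_i$, which lies inside $\mathsf T_{>-\epsilon}$ by Proposition~\ref{prop:nontrivialcharacter}. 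Strictly, that proposition concerns the individual $\mathsf Z_{\bold a,\bold b}$. Since $\mathsf Z_{\bold m,\bold b}$ is their average, the corrected product $\prod_i(1-q^{-(m_it_i-1)})\mathsf Z_{\bold m,\bold b}$ is holomorphic on $\mathsf T_{>-\epsilon}$. Absolute convergence of its expansion should follow from the Euler product bound $1+O(q_c^{-(1+\epsilon')})$ used in Proposition~\ref{prop:trivialcharacter}, with the $\chi$-twists inserted; the twists have absolute value $1$ and do not affect that estimate.

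Finally, take $\mathsf C$ to be the maximum over the finitely many $\bold b$ of $\sum_{\bold j}|B^{(\bold b)}_{\bold j}|\prod_i q^{-j_i/m_i}$. This constant depends only on $\underline{X}$ and $\bold m$ (and $q$), and the claimed inequality follows. The only real obstacle is the step justifying absolute convergence, as opposed to mere holomorphy, at the boundary point $t_i = 1/m_i$. This is handled by dominating the expanded product coefficientwise by the untwisted Euler product with $|\cdot|$ taken termwise, which converges on $\mathsf T_{>-\epsilon}$ by the local estimate.
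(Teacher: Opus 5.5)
Your proposal is correct and is essentially the paper's argument. The paper reads the bound off Proposition~\ref{prop:counting}, using $|\tau_{\bold a}|\le\tau_0$ together with the (implicit) uniformity in $\bold r$ of the $O$-term. Your detour through the exact identity $\#\underline{U}_{\bold r,\bold m}(\bold k)\,q^{-\sum_i r_i/m_i}=\sum_{j_i\le r_i,\,\bold j\equiv\bold b}B_{\bold j}q^{-\sum_i j_i/m_i}$ and absolute convergence at $t_i=1/m_i$ (an interior point of $\mathsf T_{>-\epsilon}$, not a boundary point) simply makes that uniformity explicit.

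One justification should be corrected. The twisted Euler factors still satisfy $1+O(q_c^{-(1+\epsilon')})$ not because the twists have absolute value $1$, but because $\chi_{\bold 0}(\bold a,m_i\bold e^i)=1$. So the leading term $q_c^{-m_it_i}$ is untwisted and cancels exactly against $1-q_c^{-m_it_i}$, and your termwise absolute-value domination must be applied only after this cancellation.
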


\begin{proof}
Note that we have $|\tau_{\bold a}| \leq \tau_0$.
Thus our assertion follows from Proposition~\ref{prop:counting}.
\end{proof}

\subsection{The main result}

Recall that the lattice of numerical $1$-cycles $N_1(\underline{X})_{\mathbb Z}$ sits in the exact sequence:
\[
0 \to N_1(\underline{X})_{\mathbb Z} \to \mathbb Z^{\Sigma^{(1)}} \to \mathfrak X(\underline{T})^\vee \to 0,
\]
where $\mathfrak X(\underline{T})$ is the group of characters of $T$.

Next for our Campana orbifold $(X, \Delta_{\bold m})$, we define the following index:
\begin{defi}
    We define the index $\mathsf r(X, \Delta_{\bold m})$ by
    \[
    \mathsf r(X, \Delta_{\bold m}) = \min\{ -(K_{\underline{X}} + \Delta_{\bold m}). \alpha > 0 \, | \, \alpha \in N_1(\underline{X})_{\mathbb Z} \}.
    \]
\end{defi}

Now we define the counting function we are interested in:
for any positive integer $d$, we define
\[
N((X, \Delta_{\bold m}), \mathsf r(X, \Delta_{\bold m})d)  = \sum_{\alpha \in \mathrm{Nef}_1(\underline{X})_{\mathbb Z}, -(K_{\underline{X}} + \Delta_{\bold m}).\alpha \leq \mathsf r(X, \Delta_{\bold m})d} \#M_\alpha^\circ(\bold k).
\]
We are interested in the asymptotic behavior of this counting function as $d$ goes to $\infty$. To this end, we consider the following auxiliary cones: let $\epsilon > 0$ be a small positive rational number. We define the shrunken nef cone by
\[
\mathrm{Nef}_1(\underline{X})_\epsilon = \{ \alpha \in \mathrm{Nef}_1(\underline{X})\, | \, \Delta_i.\alpha \geq -\epsilon(K_{\underline{X}} + \Delta_{\bold m}).\alpha \text{ for any $i$} \},
\]
and we denote the closure of the complement by 
\[
\mathsf C_\epsilon = \overline{\mathrm{Nef}_1(\underline{X}) \setminus \mathrm{Nef}_1(\underline{X})_\epsilon}.
\]
Note that these are rational polyhedral cones.

\begin{defi}
    Let $\mathsf C \subset \mathrm{Nef}_1(\underline{X})$ be a rational polyhedral subcone. We fix the Lebesgue measure on $N_1(\underline{X})$ such that the fundamental domain of the lattice $N_1(\underline{X})_{\mathbb Z}$ has volume $1$.
    We define the alpha constant $\alpha(\mathsf C)$ by
    \[
    \alpha(\mathsf C) = (\dim \, N_1(\underline{X})) \cdot \mathrm{vol}(\{\alpha \in \mathsf C \, | \, -(K_{\underline{X}} + \Delta_{\bold m}).\alpha \leq 1) \}).
    \]
    Note that we may interpret this as the volume of
    \[
    \mathsf C\cap \{ -(K_{\underline{X}} + \Delta_{\bold m}).\alpha = 1\}.
    \]
    The constant $\alpha(\mathrm{Nef}_1(\underline{X}))$ is denoted by $\alpha(X, \Delta_{\bold m})$
\end{defi}
Finally we define the following counting functions: let $\mathsf C \subset \mathrm{Nef}_1(\underline{X})$ be a rational polyhedral subcone.
Then
\[
N(\mathsf C, \mathsf r(X, \Delta_{\bold m})d) = \sum_{\alpha \in \mathsf C_{\mathbb Z}, -(K_{\underline{X}} + \Delta_{\bold m}).\alpha \leq \mathsf r(X, \Delta_{\bold m})d} \#M_\alpha^\circ(\bold k).
\]
Now we state our main theorem: 
\begin{theo}
\label{theo:Campana}
    We have
    \[
    N((X, \Delta_{\bold m}), \mathsf r(X, \Delta_{\bold m})d) \sim \mathsf c(X, \Delta_{\bold m})q^{\mathsf r(X, \Delta_{\bold m})d}(\mathsf r(X, \Delta_{\bold m})d)^{\rho(\underline{X})-1},
    \]
    as $d \to \infty$ where $\mathsf c(X, \Delta_{\bold m})$ is given by
    \[
     \mathsf c(X, \Delta_{\bold m}) = \frac{\alpha(X, \Delta_{\bold m})q^n}{(1-q^{-\mathsf r(X, \Delta_{\bold m})})\prod_{i\in \Sigma^{(1)}} m_i}\sum_{\bold a  \in \mathcal X(\underline{T})_{\bold m}}\tau_{\bold a}.
    \]
    Here $\mathcal X(\underline{T})_{\bold m} \subset \prod_i \mathbb Z/m_i\mathbb Z$ is the orthogonal dual of the image of
    \[
    N_1(\underline{X}) \hookrightarrow \mathbb Z^{\Sigma^{(1)}} \to \prod_{i\in \Sigma^{(1)}} \mathbb Z/m_i\mathbb Z.
    \]
    
\end{theo}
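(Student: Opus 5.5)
We sum the pointwise asymptotic of Proposition~\ref{prop:counting} over lattice points in the nef cone, splitting the cone into the shrunken part $\mathrm{Nef}_1(\underline{X})_\epsilon$ and the boundary part $\mathsf C_\epsilon$. Throughout, $N((X,\Delta_{\bold m}),\cdot)$ is understood as counting Campana curves, i.e. points of $\underline M^\circ_{\alpha,\bold m}$.

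\textbf{Step 1 (reduction to Hilbert schemes).} By Lemma~\ref{lemm:countingproduct}, $\#\underline M^\circ_{\alpha,\bold m}(\mathbf k)=(q-1)^n\#\underline U_{\bold r,\bold m}(\mathbf k)$ with $r_i=\Delta_i.\alpha$. We have $-(K_{\underline X}+\Delta_{\bold m}).\alpha=\sum_i r_i/m_i$. The class $\bold r$ lies in $N_1(\underline X)_{\mathbb Z}\subset\mathbb Z^{\Sigma^{(1)}}$, so its residue $\bold b$ modulo $\bold m$ lies in the image $\Lambda$ of $N_1(\underline X)_{\mathbb Z}$ in $\prod_i\mathbb Z/m_i$.

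\textbf{Step 2 (the shrunken cone).} For $\alpha\in\mathrm{Nef}_1(\underline X)_\epsilon$ we have $\min_i r_i\geq \epsilon\, d(\alpha)$, where $d(\alpha)=-(K_{\underline X}+\Delta_{\bold m}).\alpha$. Proposition~\ref{prop:counting} then gives
\[
\#\underline M^\circ_{\alpha,\bold m}(\mathbf k)=q^{d(\alpha)}\Bigl(\tfrac{q^n}{m^{\#\Sigma^{(1)}}}S(\bold b)+O(q^{-\eta\epsilon d(\alpha)})\Bigr),\qquad S(\bold b)=\sum_{\bold a}\chi_{\bold b}(\bold a,\bold 0)\tau_{\bold a},
\]
which depends only on $\bold b$.

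Next we sum over $\alpha\in\mathrm{Nef}_1(\underline X)_{\epsilon,\mathbb Z}$ with $d(\alpha)\le \mathsf r d$. We group the terms by the slice value $d(\alpha)=\mathsf r k$. Each slice is a polytope of dimension $\rho-1$, and within it the lattice points are equidistributed among the cosets of the finite-index sublattice $\ker(N_1(\underline X)_{\mathbb Z}\to\Lambda)$. Hence the number of points in the slice with residue $\bold b$ is
\[
\frac{\alpha(\mathrm{Nef}_1(\underline X)_\epsilon)(\mathsf r k)^{\rho-1}}{\#\Lambda}+O(k^{\rho-2}).
\]
Summing $q^{\mathsf r k}$ over $k\le d$ produces the geometric factor $(1-q^{-\mathsf r})^{-1}q^{\mathsf r d}(\mathsf r d)^{\rho-1}$. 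The averaged coefficient is
\[
\frac{1}{\#\Lambda}\sum_{\bold b\in\Lambda}S(\bold b).
\]
We evaluate it by orthogonality: $\sum_{\bold b\in\Lambda}\chi_{\bold b}(\bold a,\bold 0)$ vanishes unless $\bold a$ (its reduction modulo $\bold m$) is orthogonal to $\Lambda$, that is, $\bold a\in\mathcal X(\underline T)_{\bold m}$. The factor $m^{\#\Sigma^{(1)}}/\prod m_i$ accounts for the lifts of $\bold a$ from $\prod\mathbb Z/m_i$ to $\prod\mathbb Z/m$. We also need to check that $\tau_{\bold a}$ depends only on $\bold a$ modulo $\bold m$, which holds because $\chi_{\bold 0}$ only sees $a_i/m_i$. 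After this bookkeeping the constant becomes $\frac{q^n}{\prod m_i}\sum_{\bold a\in\mathcal X(\underline T)_{\bold m}}\tau_{\bold a}$.

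\textbf{Step 3 (the boundary cone).} On $\mathsf C_\epsilon$ we use Corollary~\ref{coro:uniformbound}, which gives $\#\underline M^\circ_{\alpha,\bold m}(\mathbf k)\le \mathsf C' q^{d(\alpha)}$. Summing gives
\[
N(\mathsf C_\epsilon,\mathsf r d)\le \mathsf C''\,\alpha(\mathsf C_\epsilon)\,q^{\mathsf r d}(\mathsf r d)^{\rho-1}+O(q^{\mathsf r d}d^{\rho-2}),
\]
and $\alpha(\mathsf C_\epsilon)\to 0$ as $\epsilon\to0$. Comparing $\limsup$ and $\liminf$ of $N/(q^{\mathsf r d}(\mathsf r d)^{\rho-1})$, and using $\alpha(\mathrm{Nef}_1(\underline X)_\epsilon)\to\alpha(X,\Delta_{\bold m})$, we then let $\epsilon\to0$.

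\textbf{Main obstacle.} The hard part is Step~2's uniform lattice-point count. It has to be done per residue class $\bold b$, with a remainder of order $o(k^{\rho-1})$ that is uniform across slices, so that the character sum reorganizes exactly into the sum over $\mathcal X(\underline T)_{\bold m}$ with the normalization $1/\prod m_i$. I would first handle it with a standard Ehrhart/Davenport bound on dilates of the rational polytope $\mathrm{Nef}_1(\underline X)_\epsilon\cap\{d=1\}$, translated by coset representatives. The slice values also need care: $d(\alpha)$ takes values in $\mathsf r\mathbb Z$, by the definition of $\mathsf r$ together with the fact that $d$ is a rational linear form on the lattice. This is why we sum over $k$ with slice value $\mathsf r k$.
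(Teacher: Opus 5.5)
Your route is the paper's route: the pointwise asymptotic of Proposition~\ref{prop:counting} on the shrunken cone, Corollary~\ref{coro:uniformbound} on $\mathsf C_\epsilon$, then $\epsilon\to0$. Steps 1 and 3 are fine. The gap is the equidistribution claim in Step 2, and sharper lattice-point counting cannot repair it because the claim is false.

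The lattice points of the slice $\{-(K_{\underline X}+\Delta_{\mathbf m}).\alpha=\mathsf r k\}$ form a translate $\mathbf r_k+R$ of $R=\{\mathbf r\in N_1(\underline X)_{\mathbb Z} : \sum_i r_i/m_i=0\}$. So their residues lie in a single coset of the image $\bar R\subset\Lambda$ of $R$, and $\bar R$ is in general a proper subgroup of $\Lambda$. Averaging $S(\mathbf b)$ over such a coset kills only the $\mathbf a$ that are non-trivial on $\bar R$. Every $\mathbf a\in\bar R^{\perp}$ survives with a phase $e^{-2\pi\sqrt{-1}\,\theta_{\mathbf a}k}$. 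Here $\theta_{\mathbf a}=\sum_i a_i r_{1,i}/m_i\in\mathbb Q/\mathbb Z$, where $\mathbf r_1$ is any lattice point with $-(K_{\underline X}+\Delta_{\mathbf m}).\mathbf r_1=\mathsf r$.

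Since $\sum_{k\le d}q^{\mathsf r k}(\cdots)$ is dominated by the last few slices, these phases do not average out. Doing your bookkeeping correctly gives the main term
\[
\frac{\alpha\, q^n}{\prod_i m_i}\,q^{\mathsf r d}(\mathsf r d)^{\rho(\underline X)-1}\sum_{\mathbf a\in\bar R^{\perp}}\tau_{\mathbf a}\,\frac{e^{-2\pi\sqrt{-1}\,\theta_{\mathbf a}d}}{1-q^{-\mathsf r}e^{2\pi\sqrt{-1}\,\theta_{\mathbf a}}}.
\]
Only the terms with $\theta_{\mathbf a}=0$, i.e.\ $\mathbf a\in\mathcal X(\underline T)_{\mathbf m}$, produce $\mathsf c(X,\Delta_{\mathbf m})$.

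These extra terms genuinely occur. Take $\underline X=\mathbb P^1$ and $\mathbf m=(2,2)$. Then $\mathsf r=\rho=1$, the $k$-th slice is the single point $(k,k)$, $\bar R=0$, and $\Lambda=\{(0,0),(1,1)\}$. Proposition~\ref{prop:counting} gives, for the class of degree $k$,
\[
\#\underline M^\circ_{k,\mathbf m}(\mathbf k)=\tfrac q4\bigl(\tau_{00}+\tau_{11}+(-1)^k(\tau_{10}+\tau_{01})\bigr)q^k+o(q^k).
\]
Here $\tau_{10}=\tau_{01}>0$. The twist $(-1)^{r_0|c|}$ is trivial on the leading local terms $r_0=2$, so the twisted Euler product has the same poles and positive local factors. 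You can also see this directly: a squarefull divisor of odd degree needs a point of multiplicity at least $3$. So, relative to $q^k$, odd-degree Campana curves are rarer by a factor of order $q^{-1}$. Hence $N(d)/q^d$ has different limits along even and odd $d$, and the asymptotic as stated fails for $(\mathbb P^1,\tfrac12[0]+\tfrac12[\infty])$.

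The paper's proof of Proposition~\ref{prop:maincounting} makes the same move, since it sums over all $\mathbf b\in N_1(X)\otimes\mathbb Z/m\mathbb Z$ within each slice. So you inherited this problem rather than introduced it. What the method actually proves is the stated asymptotic when $\bar R=\Lambda$ (e.g.\ all $m_i=1$). In general it gives asymptotics along residue classes of $d$ modulo $[\Lambda:\bar R]$, and $\mathsf c(X,\Delta_{\mathbf m})$ is their average.

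There is also a smaller normalization issue, which the paper shares. Take $\alpha$ literally as $\rho$ times the volume of $\{\cdot\le 1\}$ for the $N_1(\underline X)_{\mathbb Z}$-normalized measure. Then a slice contains about $\mathsf r\,\alpha(\mathsf r k)^{\rho-1}$ lattice points, not $\alpha(\mathsf r k)^{\rho-1}$. For $\mathbb P^1$ with $\mathbf m=\mathbf 1$, one has $\alpha=1/2$ and $\mathsf r=2$, yet each slice has one point. So your slice count requires $\alpha$ to mean the $R$-normalized volume of the slice $\{\cdot=1\}$.
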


In the next subsection, we will show that $\mathsf c(X, \Delta_{\bold m})$ is a positive real number.
The rest of this section is devoted to the proof of Theorem~\ref{theo:Campana}.
First we prove the following proposition:

\begin{prop}
\label{prop:maincounting}
     We have
    \[
    N(\mathrm{Nef}_1(\underline{X})_\epsilon, \mathsf r(X, \Delta_{\bold m})d) \sim \mathsf c(\mathrm{Nef}_1(\underline{X})_\epsilon)q^{\mathsf r(X, \Delta_{\bold m})d}(\mathsf r(X, \Delta_{\bold m})d)^{\rho(\underline{X})-1},
    \]
    as $d \to \infty$ where $\mathsf c(\mathrm{Nef}_1(\underline{X})_\epsilon)$ is given by
    \[
     \mathsf c(\mathrm{Nef}_1(\underline{X})_\epsilon) = \frac{\alpha(\mathrm{Nef}_1(\underline{X})_\epsilon)q^n}{(1-q^{-\mathsf r(X, \Delta_{\bold m})})\prod_{i\in \Sigma^{(1)}} m_i}\sum_{\bold a  \in \mathcal X(\underline{T})_{\bold m}}\tau_{\bold a}.
    \]
\end{prop}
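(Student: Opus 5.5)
The plan is to count class by class using Lemma~\ref{lemm:countingproduct} and Proposition~\ref{prop:counting}, and then sum over the lattice points of the shrunken cone. For $\alpha \in \mathrm{Nef}_1(\underline{X})_{\epsilon,\mathbb Z}$ write $r_i = \Delta_i.\alpha$. Note that $\sum_i r_i/m_i = -(K_{\underline{X}}+\Delta_{\bold m}).\alpha$, since $-(K_{\underline X}+\Delta_{\bold m}) = \sum_i \frac{1}{m_i}\Delta_i$. Also note that $N((X,\Delta_{\bold m}),\cdot)$ should count Campana curves, i.e.\ $\#\underline{M}^\circ_{\alpha,\bold m}(\mathbf k) = (q-1)^n\#\underline{U}_{\bold r,\bold m}(\mathbf k)$. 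On the shrunken cone we have $\min_i r_i \geq \epsilon\, \ell(\alpha)$, where $\ell(\alpha) := -(K_{\underline X}+\Delta_{\bold m}).\alpha$. Proposition~\ref{prop:counting} therefore gives
\[
\#\underline{M}^\circ_{\alpha,\bold m}(\mathbf k) = q^{n+\ell(\alpha)}\Bigl(\frac{1}{m^{\#\Sigma^{(1)}}}\sum_{\bold a}\chi_{\bold b(\alpha)}(\bold a,\bold 0)\tau_{\bold a} + O(q^{-\eta\epsilon\ell(\alpha)})\Bigr),
\]
where $\bold b(\alpha)$ is the residue of $\bold r$ modulo $\bold m$.

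The key step is to average this main term over the lattice points. Consider the map $N_1(\underline X)_{\mathbb Z}\to \prod_i \mathbb Z/m_i\mathbb Z$ and let $G$ be its image. The residue $\bold b(\alpha)$ depends only on the class of $\alpha$ modulo the finite-index sublattice $\Lambda = \ker(N_1(\underline X)_{\mathbb Z}\to\prod_i\mathbb Z/m_i)$. On each coset of $\Lambda$, the lattice points of the cone with $\ell(\alpha)=k$ equidistribute as $k\to\infty$, with density $1/[N_1:\Lambda] = 1/\#G$ times the full count. Hence, summing $\chi_{\bold b}(\bold a,\bold 0)$ over $\bold b \in G$ produces $\#G$ when $\bold a$ lies in the orthogonal $\mathcal X(\underline T)_{\bold m}$ and $0$ otherwise. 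Here $\chi_{\bold b}(\bold a,\bold 0)$ depends on $a_i$ only modulo $m_i$, so one first collapses the sum over $\prod_i\mathbb Z/m$ to one over $\prod_i \mathbb Z/m_i$. That collapse contributes a factor $\prod_i (m/m_i)$, which turns $1/m^{\#\Sigma^{(1)}}$ into $1/\prod_i m_i$. The averaged main term is thus $\frac{1}{\prod_i m_i}\sum_{\bold a\in\mathcal X(\underline T)_{\bold m}}\tau_{\bold a}$.

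Next I sum over degrees. The values $\ell(\alpha)$ lie in $\mathsf r\mathbb Z$, where $\mathsf r = \mathsf r(X,\Delta_{\bold m})$. The number of lattice points of $\mathrm{Nef}_1(\underline X)_\epsilon$ on the slice $\ell = \mathsf r k$ in a fixed coset of $\Lambda$ is
\[
\frac{\alpha(\mathrm{Nef}_1(\underline X)_\epsilon)}{\#G}\,\mathsf r\,(\mathsf r k)^{\rho-2} + O(k^{\rho-3}).
\]
This is the standard lattice-point count in a polyhedral cone, with the normalization chosen so that $\mathrm{vol}$ of the slice equals $\alpha/\rho\cdot\rho$; I would double-check the factor $\mathsf r$ against the lattice index of $\ell(N_1)$. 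Summing $q^{\mathsf r k}(\mathsf r k)^{\rho-2}\mathsf r$ over $k\le d$ gives $q^{\mathsf r d}(\mathsf r d)^{\rho-2}\mathsf r/(1-q^{-\mathsf r})$. This has the right shape, except that the power is $\rho-2$ instead of $\rho-1$, so I need to reconcile it. The interpretation of $\alpha(\mathsf C)$ as a $\rho$ times volume suggests that the slice count is really $\alpha\,(\mathsf rk)^{\rho-1}/\#G$ up to the index of $\ell$. I would fix these normalizations carefully, and this bookkeeping is where I expect most of the errors.

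Finally I control the error terms. The errors $O(q^{\ell(1-\eta\epsilon)})$, summed over the polynomially many lattice points, give $O(q^{\mathsf r d(1-\eta\epsilon)}d^{\rho})$, which is negligible. The equidistribution error $O(k^{\rho-2})$ relative to the main term is also lower order after weighting by $q^{\mathsf r k}$, because the weighted sum is dominated by $k$ near $d$. The main obstacle is the coset-equidistribution step: one must match the orthogonality of characters with exactly the group $G$ and get the index factors right, so that $\#G$ cancels and the sum over $\mathcal X(\underline T)_{\bold m}$ appears with coefficient $1/\prod_i m_i$.
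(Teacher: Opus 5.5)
Your outline follows the paper's own argument. You apply Lemma~\ref{lemm:countingproduct} and Proposition~\ref{prop:counting} class by class on the shrunken cone, where $\min_i r_i \geq \epsilon\,\ell(\alpha)$. You average $\chi_{\bold b}(\bold a,\bold 0)$ over the residues met on each degree slice, use orthogonality to reach $\mathcal X(\underline T)_{\bold m}$ with the factor $1/\prod_i m_i$, and finish with a geometric sum in the degree. You are also right that $N$ must sum $\#\underline M^\circ_{\alpha,\bold m}(\mathbf k)$. However, the two steps you defer as bookkeeping are wrong, not merely unchecked.

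\emph{First, the slice count.} The slice $\mathrm{Nef}_1(\underline X)_\epsilon \cap \{\ell = \mathsf r k\}$ is a $(\rho-1)$-dimensional polytope dilated by $k$. Its lattice points form a translate of $R = \ker \ell \cap N_1(\underline X)_{\mathbb Z}$. It therefore contains $\sim \alpha(\mathrm{Nef}_1(\underline X)_\epsilon)\,\mathsf r\,(\mathsf r k)^{\rho-1}$ points: differentiate $\frac{1}{\rho}\alpha(\mathrm{Nef}_1(\underline X)_\epsilon)T^{\rho}$ in $T$ and multiply by the spacing $\mathsf r$ of $\ell(N_1(\underline X)_{\mathbb Z}) = \mathsf r\mathbb Z$. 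So your exponent $\rho-2$ is wrong, and so is your proposed repair that drops $\mathsf r$. Done correctly, the method yields $\mathsf r\cdot \mathsf c(\mathrm{Nef}_1(\underline X)_\epsilon)$.

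Check this on $\underline X = \mathbb P^1$, $\bold m = (1,1)$. There are $q^{2e+1}-q^{2e-1}$ maps of degree $e \geq 1$, so $N = q^{2d+1}-1$ and the constant is $q$. The displayed formula, with $\alpha(\mathrm{Nef}_1(\underline X)) = 1/2$ and $\tau_{\bold 0} = 1-q^{-2}$, gives $q/2$. The paper's proof has the same slip when it asserts $n_{d'}m^{\dim N_1(X)} \sim \alpha(\mathrm{Nef}_1(\underline X)_\epsilon)(\mathsf r d')^{\rho-1}$; here $n_{d'} = 1$. The displayed constant is right only if $\alpha$ is read as the volume of $\mathsf C \cap \{\ell = 1\}$ for the measure in which $R$ has covolume $1$. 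That measure is $\mathsf r$ times the one implicit in $\alpha = \rho\cdot \mathrm{vol}(\mathsf C \cap \{\ell \leq 1\})$.

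\emph{Second, and more seriously, equidistribution.} Your claim that each slice equidistributes over all $\#G$ cosets of $\Lambda$ is false in general. The slice $\{\ell = \mathsf r k\}$ meets only cosets $\beta + \Lambda$ with $\ell(\beta) \equiv \mathsf r k$ modulo $\ell(\Lambda)$. Since $\ell(\Lambda) \subset \mathbb Z$, it is often a proper subgroup of $\mathsf r\mathbb Z$, and always is when $\mathsf r \notin \mathbb Z$. What does hold, and is all the paper's translates of $\Lambda_d$ give, is equidistribution over the coset $\bold r_{k,0} + \mathrm{im}(R)$ in $\prod_i \mathbb Z/m_i\mathbb Z$.

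Orthogonality then kills only those $\bold a$ not orthogonal to $\mathrm{im}(R)$. Each $\bold a \perp \mathrm{im}(R)$ with $\bold a \notin \mathcal X(\underline T)_{\bold m}$ contributes a term proportional to $\zeta_{\bold a}^{k}\tau_{\bold a}$, with $\zeta_{\bold a} \neq 1$ a root of unity. After the weighted sum over $k \leq d$, this is of the same order $q^{\mathsf r d}d^{\rho-1}$ as the main term.

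For instance, take $\underline X = \mathbb P^1$ and $\bold m = (2,2)$, so $R = 0$ and $\mathcal X(\underline T)_{\bold m} = \{(0,0),(1,1)\}$. Proposition~\ref{prop:counting} itself shows that the limits of $\#\underline U_{(e,e),\bold m}(\mathbf k)/q^{e}$ along even and odd $e$ differ by $\frac{q}{2(q-1)}(\tau_{(1,0)}+\tau_{(0,1)})$. This is positive, since all the relevant Euler factors are positive. Hence $N/q^{d}$ oscillates with the parity of $d$ and has no single limit.

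The paper's proof makes the same jump when it sums over all $\bold b \in N_1(X)\otimes\mathbb Z/m\mathbb Z$ on every slice, so no bookkeeping closes this gap. You need an extra hypothesis such as $N_1(\underline X)_{\mathbb Z} = R + \Lambda$ (equivalently $\ell(\Lambda) = \mathsf r\mathbb Z$). Alternatively, state the asymptotics separately along residue classes of $d$.
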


\begin{proof}
    First let us fix some notation: we define the convex set
    \[
    \mathsf R_d = \mathrm{Nef}_1(\underline{X})_\epsilon \cap \{ -(K_{\underline{X}} + \Delta_{\bold m}).\alpha = \mathsf r(X, \Delta_{\bold m})d\},
    \]
    and we denote its set of lattice points by $\mathsf R_{d, \mathbb Z}$.
    We pick one lattice point $\bold r_{d, 0} \in \mathsf R_{d, \mathbb Z}$. (Note that such a lattice point exists assuming $d$ is sufficiently large.)

    With these notations, we have
    \begin{align*}
        N(\mathrm{Nef}_1(\underline{X})_\epsilon, \mathsf r(X, \Delta_{\bold m})d) = 
        \sum_{d' = 0}^d \sum_{\alpha \in \mathsf R_{d', \mathbb Z}} \#M_\alpha^\circ(\mathbf k).
    \end{align*}
    Then it follows from Lemma~\ref{lemm:countingproduct} that we have
    \[
    N(\mathrm{Nef}_1(\underline{X})_\epsilon, \mathsf r(X, \Delta_{\bold m})d) = \sum_{d' = 0}^d \sum_{\bold r \in \mathsf R_{d', \mathbb Z}}(q-1)^n \#U_{\bold r, \bold m}(\mathbf k).
    \]
    Let us consider the following lattice:  
    \[
    R = \{ \bold r \in N_1(\underline{X})_{\mathbb Z} \, | \, \sum_i r_i/m_i = 0\}.
    \]
    Then the mapping
    \[
    \mathsf R_{d} \to R_{\mathbb R}, \bold r \mapsto \bold r - \bold r_{d, 0}
    \]
    sends lattice points to lattice points.
    We fix a fundamental set $\Lambda_d \subset R$ of $R\otimes \mathbb Z/m\mathbb Z$, i.e., if we denote a basis for $R$ by $e_1, \cdots, e_s$, then 
    \[
    \Lambda_d = \left\{ \sum_{j = 1}^s k_je_j \in R \, \middle| \, 0 \leq k_j < m \right\}
    \]
    Let $n_d$ be the number of translations of $\Lambda_d$ which are contained in $\mathsf R_{d, \mathbb Z}$ by the above mapping. 
    Then by combining Proposition~\ref{prop:counting} with a lattice counting argument, we obtain:
    \begin{align*}
    &N(\mathrm{Nef}_1(\underline{X})_\epsilon, \mathsf r(X, \Delta_{\bold m})d)\\ &= \sum_{d' = 0}^d n_{d'}\sum_{\bold b \in N_1(X)\otimes \mathbb Z/m\mathbb Z}q^{\mathsf r(X, \Delta_{\bold m})d' + n} \frac{1}{m^{\#\Sigma^{(1)}}}\sum_{\bold a  \in \prod_i \mathbb Z/m \mathbb Z} \chi_{\bold b}(\bold a, \bold 0)\tau_{\bold a} \\
    &+ O(q^{(1-\epsilon)\mathsf r(X, \Delta_{\bold m})d}d^{\rho(\underline{X})-1})
    + O(q^{\mathsf r(X, \Delta_{\bold m})d}d^{\rho(\underline{X})-2}).
    \end{align*}
    Here the first error term is coming from Proposition~\ref{prop:counting} and the second error term is coming from the fact that
    \[
    \#\{ \bold r \in \mathsf R_{d, \mathbb Z} \,| \, \bold r \equiv \bold b \} = n_d + O(d^{\rho(\underline{X})-2}).
    \]
    As the error term is asymptotically negligible, we focus on the main term. This can be transformed to 
    \[
    \sum_{d' = 0}^d n_{d'}q^{\mathsf r(X, \Delta_{\bold m})d' + n} \frac{1}{m^{\#\Sigma^{(1)}}}\sum_{\bold a  \in \prod_i \mathbb Z/m \mathbb Z}\sum_{\bold b \in N_1(X)\otimes \mathbb Z/m\mathbb Z} \chi_{\bold b}(\bold a, \bold 0)\tau_{\bold a} 
    \]
    Then when $\bold a \equiv \bold a' \mod \bold m$, we have 
    \[
    \chi_{\bold b}(\bold a, 0) \tau_{\bold a} = \chi_{\bold b}(\bold a', 0) \tau_{\bold a'}.
    \]
    Thus the above summation becomes
     \begin{align*}
    \sum_{d' = 0}^d n_{d'}q^{\mathsf r(X, \Delta_{\bold m})d' + n} \frac{1}{\prod_i m_i}\sum_{\bold a  \in \prod_i \mathbb Z/m_i \mathbb Z}\sum_{\bold b \in N_1(X)\otimes \mathbb Z/m\mathbb Z} \chi_{\bold b}(\bold a, \bold 0)\tau_{\bold a} 
    \end{align*}
    Then by the orthogonality relation for characters, this becomes
    \begin{align*}
    \sum_{d' = 0}^d n_{d'}m^{\dim \, N_1(X)}q^{\mathsf r(X, \Delta_{\bold m})d' + n} \frac{1}{\prod_i m_i}\sum_{\bold a  \in \mathcal X(\underline{T})_{\bold m}}\tau_{\bold a}.
    \end{align*}
    Then note that we have
    \[
     n_{d'}m^{\dim \, N_1(X)} \sim \alpha(\mathrm{Nef}_1(\underline{X})_\epsilon)(\mathsf r(X, \Delta_{\bold m})d')^{\rho(X)-1},
    \]
    as $d' \to \infty$.
   Thus we obtain
    \[
    \frac{\alpha(\mathrm{Nef}_1(\underline{X})_\epsilon) q^n}{(1-q^{-\mathsf r(X, \Delta_{\bold m})})\prod_i m_i}\left(\sum_{\bold a  \in \mathcal X(\underline{T})_{\bold m}}\tau_{\bold a}\right) q^{\mathsf r(X, \Delta_{\bold m})d}(\mathsf r(X, \Delta_{\bold m})d)^{\rho(X)-1},
    \]
    as $d \to \infty$.
\end{proof}

\begin{proof}[Proof of Theorem~\ref{theo:Campana}]
    We consider the following inequalities
    \begin{align*}
    &\frac{N(\mathrm{Nef}_1(\underline{X}_\epsilon), \mathsf r(X, \Delta_{\bold m})d)}{q^{\mathsf r(X, \Delta_{\bold m})d}(\mathsf r(X, \Delta_{\bold m})d)^{\rho(X)-1}}\leq \frac{N((X, \Delta_{\bold m}), \mathsf r(X, \Delta_{\bold m})d)}{q^{\mathsf r(X, \Delta_{\bold m})d}(\mathsf r(X, \Delta_{\bold m})d)^{\rho(X)-1}}\\
    &\leq
    \frac{N(\mathrm{Nef}_1(\underline{X}_\epsilon), \mathsf r(X, \Delta_{\bold m})d)}{q^{\mathsf r(X, \Delta_{\bold m})d}(\mathsf r(X, \Delta_{\bold m})d)^{\rho(X)-1}} + \frac{N(\mathsf C_\epsilon, \mathsf r(X, \Delta_{\bold m})d)}{q^{\mathsf r(X, \Delta_{\bold m})d}(\mathsf r(X, \Delta_{\bold m})d)^{\rho(X)-1}}.
    \end{align*}
    By Corollary~\ref{coro:uniformbound}
    \[
    \limsup_{d \to \infty} \frac{N(\mathsf C_\epsilon, \mathsf r(X, \Delta_{\bold m})d)}{q^{\mathsf r(X, \Delta_{\bold m})d}(\mathsf r(X, \Delta_{\bold m})d)^{\rho(X)-1}} \leq \frac{(q-1)^n\mathsf C}{(1-q^{-\mathsf r(X, \Delta_{\bold m})})}\alpha(\mathsf C_\epsilon).
    \]
    Thus by Proposition~\ref{prop:maincounting}, we have 
     \begin{align*}
    &\mathsf c(\mathrm{Nef}_1(\underline{X})_\epsilon)\leq \liminf_{d \to \infty}\frac{N((X, \Delta_{\bold m}), \mathsf r(X, \Delta_{\bold m})d)}{q^{\mathsf r(X, \Delta_{\bold m})d}(\mathsf r(X, \Delta_{\bold m})d)^{\rho(X)-1}}\\
    &\leq \limsup_{d \to \infty}\frac{N((X, \Delta_{\bold m}), \mathsf r(X, \Delta_{\bold m})d)}{q^{\mathsf r(X, \Delta_{\bold m})d}(\mathsf r(X, \Delta_{\bold m})d)^{\rho(X)-1}}\leq\mathsf c(\mathrm{Nef}_1(\underline{X})_\epsilon) + \frac{(q-1)^n\mathsf C}{(1-q^{-\mathsf r(X, \Delta_{\bold m})})}\alpha(\mathsf C_\epsilon).  
    \end{align*}
    As $\epsilon \to 0$, our assertion follows.
\end{proof}

\subsection{The leading constant}
\label{subsec:leadingconstandforCampana}
In this subsection, we discuss the leading constant $\mathsf c(X, \Delta_{\bold m})$.

\subsubsection*{Positivity}

We show that this number is positive following the ideas of \cite[Theorem 7.1]{CLTBT}:

\begin{prop}
    The leading constant $\mathsf c(X, \Delta_{\bold m})$ is positive.
\end{prop}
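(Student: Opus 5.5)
The plan is to show that $\sum_{\bold a \in \mathcal X(\underline{T})_{\bold m}} \tau_{\bold a}$ is a positive real number. The prefactor $\alpha(X,\Delta_{\bold m})q^n/\bigl((1-q^{-\mathsf r})\prod_i m_i\bigr)$ is already positive, since $\mathrm{Nef}_1(\underline{X})$ is a full-dimensional cone. The idea, following \cite[Theorem 7.1]{CLTBT}, is to rewrite the character sum as an integral of a non-negative function against a positive measure by swapping sum and integral.

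\textbf{Step 1: exchange sum and product.} By Proposition~\ref{prop:nontrivialcharacter}, $\tau_{\bold a}$ is a product over places of local integrals. Equivalently, it is the limit as $t_i \to 1/m_i$ of a convergent global integral $\int_{\underline{T}(\mathbb A_F)} \widetilde{\chi}_{\bold 0}(\bold a,(g_c))\,\mathsf H(\bold t,(g_c))^{-1}\delta_{\bold m}\,\mathrm d\tau_{\underline T}$, suitably regularized by the factors $\prod_i(1-q^{-(m_it_i-1)})$. Working at $\Re(t_i) > 1/m_i$, where everything converges absolutely, I interchange the finite sum over $\bold a \in \mathcal X(\underline{T})_{\bold m}$ with the integral. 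This gives
\[
\sum_{\bold a} \mathcal I_{\bold a,\bold 0} = \int \Bigl(\sum_{\bold a \in \mathcal X(\underline T)_{\bold m}} \widetilde\chi_{\bold 0}(\bold a,(g_c))\Bigr)\mathsf H^{-1}\delta_{\bold m}\,\mathrm d\tau .
\]

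\textbf{Step 2: evaluate the inner sum.} For an adelic point $(g_c)$, the character $\widetilde\chi_{\bold 0}(\bold a,(g_c))$ depends only on the total degree vector $\sum_c |c|\,\bold r_c \bmod \bold m$, where $r_{c,i} = v_c(g_c^*\Delta_i)$. By orthogonality, the sum over the dual group $\mathcal X(\underline T)_{\bold m}$ equals $\#\mathcal X(\underline T)_{\bold m}$ when this degree vector lies in the image of $N_1(\underline X)$ in $\prod_i \mathbb Z/m_i\mathbb Z$, and $0$ otherwise. Hence the inner sum is non-negative, and the total is $\#\mathcal X(\underline T)_{\bold m}$ times the height integral restricted to the subset $\underline T(\mathbb A_F)_{\bold m}^{\flat}$ of Campana adelic points whose degree vector is congruent mod $\bold m$ to a balanced class. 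This is exactly the CLTBT description of the Brauer–Manin-type set cut out by the Campana Brauer elements.

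\textbf{Step 3: take the limit and check the set is not null.} The regularizing factors are common to all $\bold a$, so the identity survives the limit $t_i\to 1/m_i$. The limit is then a non-negative quantity, and it is positive once the restricted set has positive measure in the limiting sense. To prove positivity, I take the adelic point coming from an actual Campana curve, for instance the constant map into $\underline T$ (all $r_i=0$), or any Campana curve of class $\alpha$, which exists by the nonemptiness of $\underline U_{\bold r,\bold m}$ for suitable $\bold r$. Such a point's degree vector lies in the image of $N_1$ because of the balancing condition. I then consider an open neighborhood of this point in which only the integral structure at finitely many places is fixed and the other places range over the integral points $\underline T(\mathfrak o_c)$. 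The local factors on this neighborhood are positive, and the Euler product of the remaining local factors converges to a positive number after the normalization by $\zeta_{\underline{\mathbb P}^1}(m_it_i)^{-1}$, as in the proof of Proposition~\ref{prop:trivialcharacter}.

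\textbf{Main obstacle.} The delicate point is Step 3: justifying that the exchange and the restriction commute with the regularized limit. The pole comes from the residues at infinitely many places, so one must check that the restricted integral has the same pole order $\#\Sigma^{(1)}$ with a nonzero residue, rather than losing the leading term. To handle this, I would write the restricted integral as $\#\mathcal X(\underline T)_{\bold m}^{-1}\sum_{\bold a}$ of twisted Euler products, each of the shape in Proposition~\ref{prop:nontrivialcharacter}. I would then show that the $\bold a=0$ term is positive and compare, using that at almost all places with $|c|$ divisible by $m$ the local factors for all $\bold a$ coincide. If that comparison proves awkward, the fallback is to get positivity directly from the counting interpretation. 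By Proposition~\ref{prop:counting}, the averaged constant is a limit of the non-negative quantities $\#\underline U_{\bold r,\bold m}(\mathbf k)/q^{\sum r_i/m_i}$ along a residue class $\bold b$ in the image of $N_1$. It therefore suffices to exhibit, for large $\bold r$ in that class, $\gg q^{\sum r_i/m_i}$ Campana divisors. These can be produced explicitly by taking $w_i = m_i\cdot(\text{generic reduced divisor}) + (\text{fixed correction})$ and checking condition \eqref{eq:supportcondition} via Lemma~\ref{lemm:Ucharacterization} on a dense open set.
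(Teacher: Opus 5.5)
Your Steps 1 and 2 are correct and match the start of the paper's proof. Exchanging the finite sum with the integral and applying orthogonality over $\mathcal X(\underline T)_{\mathbf m}$ turns $\sum_{\mathbf a}\tau_{\mathbf a}$ into a non-negative multiple of the regularized integral over the set where every $\widetilde\chi_{\mathbf 0}(\mathbf a,-)$, $\mathbf a\in\mathcal X(\underline T)_{\mathbf m}$, is trivial.

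The gap is Step 3. Take a neighborhood where $g_c\in\underline T(\mathfrak o_c)$ for all $c$ outside a finite set $S$. At each $c\notin S$ the local factor is $L_c(1,\mathrm{EP}(\underline T))\,\tau_c(\underline T(\mathfrak o_c))=1$ (the $A=\emptyset$ term of Denef's formula). So the integral over that neighborhood stays bounded as $t_i\to 1/m_i$. After multiplying by $\prod_i(1-q^{-(m_it_i-1)})$ its contribution tends to $0$, so it gives no lower bound at all. The pole comes from the terms $q_c^{-m_it_i}$ at infinitely many places. Any set useful for a lower bound must therefore contain, at almost every place, points with $v_c(g_c^*\Delta_i)=m_i$. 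Your set (degree vector in the image of $N_1(\underline X)$) is cut out by a global congruence, not by local conditions, so you cannot compute its regularized volume directly as an Euler product. Your first repair does not close the gap either. The facts $\tau_{\mathbf 0}>0$, $|\tau_{\mathbf a}|\le\tau_{\mathbf 0}$, and agreement of local factors at places with $m\mid |c|$ do not exclude cancellation in $\sum_{\mathbf a}\tau_{\mathbf a}$.

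The missing idea, which is exactly what the paper uses, is to bound this set from below by a restricted product of local sets, each lying in the kernel of every local character. This is the Darmon set of $(g_c)$ with $m_i\mid v_c(g_c^*\Delta_i)$ for all $c$ and $i$. On it every $\widetilde\chi_{\mathbf 0,c}(\mathbf a,g_c)$ equals $1$ and the Campana conditions hold automatically. The local integrals are $\sum_A\delta'(A)\prod_{i\in A}q_c^{-m_it_i}/(1-q_c^{-m_it_i})=1+\sum_i q_c^{-m_it_i}+O(q_c^{-1-\epsilon'})$, with the same first-order terms as the Campana factors. Hence the regularized limit is a convergent Euler product of positive factors, and is positive.

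Your counting fallback is this same idea in disguise: the divisors $w_i=m_iD_i$ are Darmon divisors. It can be made rigorous, but two points need correcting.

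First, by Proposition~\ref{prop:counting} the limit along a single class $\mathbf b$ is a positive multiple of $\sum_{\mathbf a}\chi_{\mathbf b}(\mathbf a,\mathbf 0)\tau_{\mathbf a}$, not of $\sum_{\mathbf a\in\mathcal X(\underline T)_{\mathbf m}}\tau_{\mathbf a}$. The latter is a positive multiple of the sum of these limits over the classes $\mathbf b$ in the image of $N_1(\underline X)$. So you should argue that all of them are $\ge 0$ and that the one for $\mathbf b=\mathbf 0$ is $>0$.

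Second, a dense open subset of $\prod_i\mathrm{Hilb}^{r_i/m_i}(\underline{\mathbb P}^1)$ does not give $\gg q^{\sum_i r_i/m_i}$ rational points uniformly in $\mathbf r$, because the discriminant and resultant loci have degree growing with $\mathbf r$. You need an actual count of tuples of reduced, pairwise disjoint effective divisors of degrees $r_i/m_i$, for instance from the generating series $\prod_c(1+\sum_i u_i^{|c|})$. That amounts to redoing the paper's Darmon computation at the level of coefficients.
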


\begin{proof}
    It suffices to show that the constant
    \[
    \mathsf c' = \sum_{\bold a \in \mathcal X(\underline{T})_{\bold m}} \tau_{\bold a},
    \]
    is positive. From the definition, we obtain 
    \[
    \mathsf c' = \lim_{t_i \mapsto 1/m_i}\left(\prod_{i \in \Sigma^{(1)}}(1-q^{-(m_it_i-1)}) \right) \sum_{\bold a \in \mathcal X(\underline{T})_{\bold m}} \int_{\underline{T}(\mathbb A_F)} \widetilde{\chi}_{\bold 0}(\bold a, (g_c)) \mathsf H(\bold t, (g_c))^{-1}\delta_{\bold m}((g_c)) \mathrm d\tau_{\underline{T}}.
    \]
    By the orthogonality relation, this is equal to 
    \begin{align*}
        \lim_{t_i \mapsto 1/m_i}\left(\prod_{i \in \Sigma^{(1)}}(1-q^{-(m_it_i-1)})\right)\left( \prod_{i\in \Sigma^{(1)}} m_i \right) \int_{\underline{T}(\mathbb A_F)^{\mathcal X(\underline{T})_{\bold m}}}  \mathsf H(\bold t, (g_c))^{-1}\delta_{\bold m}((g_c)) \mathrm d\tau_{\underline{T}},
    \end{align*}
    where $\underline{T}(\mathbb A_F)^{\mathcal X(\underline{T})_{\bold m}}$ is given by
    \[
    \underline{T}(\mathbb A_F)^{\mathcal X(\underline{T})_{\bold m}} = \{ (g_c) \in T(\mathbb A_F) \, | \, \text{ for every $\bold a \in \mathcal X(\underline{T})_{\bold m}$, $\widetilde{\chi}_{\bold 0}(\bold a, (g_c)) = 1$}\}.
    \]
    To this end, we consider the adelic space of Darmon points, i.e., 
    \[
    \underline{T}(\mathbb A_F)_{\bold m}' = \prod_c' \underline{T}(F_c)_{\bold m}',
    \]
    where $\underline{T}(F_c)_{\bold m}'$ is given by
    \[
    \underline{T}(F_c)_{\bold m}' = \left\{g_c \in \underline{T}(F_c)\, \left|\, \text{for every $i$, $m_i$ divides $v_c(g_c^*\Delta_i)$} \right. \right\} .
    \]
    Since we have 
    \[
     \underline{T}(\mathbb A_F)_{\bold m}' \subset \underline{T}(\mathbb A_F)^{\mathcal X(\underline{T})_{\bold m}},
    \]
    it suffices to show that 
    \begin{align*}
        \lim_{t_i \mapsto 1/m_i}\prod_i(1-q^{-(m_it_i-1)}) \int_{\underline{T}(\mathbb A_F)_{\bold m}'}  \mathsf H(\bold t, (g_c))^{-1}\delta_{\bold m}((g_c)) \mathrm d\tau_{\underline{T}},
    \end{align*}
    is positive. It turns out that this is equal to
    \[
    \int_{\underline{X}(\mathbb A_F)_{\bold m}'} \mathsf H(\Delta_{\bold m}, (x_c)) \mathrm d\tau_{\underline{X}},
    \]
    where $\underline{X}(\mathbb A_F)_{\bold m}'$ is given by
    \[
    \underline{X}(\mathbb A_F)_{\bold m}' = \prod_c \overline{\underline{T}(F_c)_{\bold m}'} \subset \prod_c \underline{X}(F_c).
    \]
    One can prove that this is positive using the analogue of Denef's formula for Darmon points. Thus our assertion follows.
\end{proof}

\subsubsection*{The conjecture in \cite{CLTBT}}

Here we briefly explain why our constant $\mathsf c(X, \Delta_{\bold m})$ is compatible with the conjectural description of the leading constant in \cite[Conjecture 8.3]{CLTBT}. 
We freely use the notation in \cite[Section 8]{CLTBT}.
First of all, note that the set
\[
\{ \widetilde{\chi}_{\bold 0}(\bold a, -)\, |\, \bold a \in \mathcal X(\underline{T})_{\bold m}\},
\]
is exactly equal to the set of certain unramified automorphic characters:
\[
\left\{ \chi : \underline{T}(\mathbb A_F)/(\mathsf K \cdot \underline{T}(F)) \to S^1 \, \left|\, \begin{array}{c} \text{$\chi$ is a continuous homomorphism} \\ \text{and for any $i$, $\chi_i^{m_i} = 1$} \end{array} \right. \right\},
\]
where $\mathsf K = \prod_c \underline{T}(\mathfrak o_c)$ and $\chi_i$ is a unramified Hecke character associated to $\chi$ and $i$.
We denote this group by $(\underline{T}(\mathbb A_F)/(\mathsf K \cdot \underline{T}(F)))^\vee_{\bold m}$.
We have the following lemma:
\begin{lemm}[{\cite[Corollary 4.6 and Lemma 4.7]{Loughran}}]
\label{lemm:Loughran}
    We have the following isomorphism:
    \[
    \mathrm{Br}_1(\underline{X}, \Delta_{\bold m})/\mathrm{Br}(F) \cong (\underline{T}(\mathbb A_F)/\underline{T}(F))^\vee_{\bold m},
    \]
    where $\mathrm{Br}_1(\underline{X}, \Delta_{\bold m})$ is the algebraic Campana Brauer group defined in \cite[Definition 8.1]{CLTBT}.
    Moreover this induces a bilinear pairing
    \[
    \mathrm{Br}_1(\underline{X}, \Delta_{\bold m})/\mathrm{Br}(F) \times (\underline{T}(\mathbb A_F)/\underline{T}(F))_{\bold m} \to S^1.
    \]
\end{lemm}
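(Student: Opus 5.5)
The plan is to reduce everything to the algebraic description of the Campana Brauer group from \cite{CLTBT} together with Hochschild--Serre for tori. \cite[Definition 8.1]{CLTBT} realizes $\mathrm{Br}_1(\underline{X}, \Delta_{\bold m})$ as the subgroup of $\mathrm{Br}_1(\underline{T})$ consisting of classes $\beta$ whose residue along each $\Delta_i$ is annihilated by $m_i$. For a split torus, Hochschild--Serre gives
\[
\mathrm{Br}_1(\underline{T})/\mathrm{Br}(F) \cong \rH^1(F, \mathfrak X(\underline{T})).
\]
Since $\underline{T}$ is split, $\mathfrak X(\underline{T}) \cong \mathbb Z^n$ with trivial Galois action, so the right side is $\mathrm{Hom}(\mathrm{Gal}(F^s/F), \mathbb Z^n)_{\mathrm{tors}}$, i.e. $\rH^1(F, \mathbb Q/\mathbb Z) \otimes \mathfrak X(\underline{T})$. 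An element of $\rH^1(F,\mathbb Q/\mathbb Z)$ is a finite-order Hecke character of $F$. Tensoring with $\mathfrak X(\underline{T})$ and composing with the cocharacter pairing gives a continuous character of $\underline{T}(\mathbb A_F)/\underline{T}(F)$, using $\underline{T}(\mathbb A_F) \cong (\mathbb A_F^\times)^n$. This is the standard Brauer--Manin pairing $\langle \beta, (g_c)\rangle = \sum_c \mathrm{inv}_c\, \beta(g_c)$, which is trivial on $\underline{T}(F)$ by reciprocity. Hence we obtain an injective homomorphism from $\mathrm{Br}_1(\underline{T})/\mathrm{Br}(F)$ into the group of finite-order automorphic characters of $\underline{T}$.

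The next step is to match the Campana residue condition with the condition $\chi_i^{m_i}=1$. For a boundary divisor $\Delta_i$ with ray $v_i \in N = \mathfrak X(\underline{T})^\vee$, the residue of $\beta$ along $\Delta_i$ lies in $\rH^1(\mathbf k(\Delta_i), \mathbb Q/\mathbb Z)$. For $\beta$ corresponding to $\psi \otimes u$ (with $\psi$ a Hecke character and $u \in \mathfrak X(\underline{T})$), I would compute this residue to be the restriction of $\langle u, v_i\rangle \psi$. Dually, the Hecke character $\chi_i := \chi \circ v_i : \mathbb A_F^\times/F^\times \to S^1$ is exactly the character attached to this residue. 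So $m_i \cdot \mathrm{res}_{\Delta_i}\beta = 0$ if and only if $\chi_i^{m_i}=1$, and $\mathrm{Br}_1(\underline{X},\Delta_{\bold m})/\mathrm{Br}(F)$ lands in $(\underline{T}(\mathbb A_F)/\underline{T}(F))^\vee_{\bold m}$.

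Surjectivity comes next. Any continuous character $\chi$ with all $\chi_i^{m_i}=1$ has finite order, because the $v_i$ span $N$, so $\chi = \chi\circ(\text{cocharacters})$ is determined by the torsion characters $\chi_i$ up to a finite index. By class field theory for $F$, it therefore comes from $\rH^1(F, \mathbb Q/\mathbb Z)\otimes \mathfrak X(\underline{T})$ and hence from some $\beta$. The computation in the previous paragraph then shows that $\beta$ satisfies the Campana residue bound.

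The bilinear pairing is obtained by restricting the evaluation pairing $\mathrm{Br}_1(\underline{T}) \times \underline{T}(\mathbb A_F) \to \mathbb Q/\mathbb Z$ to the Campana adelic points $(\underline{T}(\mathbb A_F)/\underline{T}(F))_{\bold m}$. The key check is that local evaluation extends continuously to Campana $F_c$-points of $\underline{X}$. This holds because at a point meeting $\Delta_i$ with multiplicity divisible by, or at least $m_i$ (in the Campana sense), the local invariant is governed by $v_c \cdot \mathrm{res}_{\Delta_i}$. That quantity is well defined on the relevant cosets, as in \cite[Lemma 4.7]{Loughran}.

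I expect the main obstacle to be the residue computation identifying $\mathrm{res}_{\Delta_i}\beta$ with $\chi_i$, including signs and normalizations, and I would carry it out in the affine chart $\mathbb A^1 \times \mathbb G_m^{n-1}$ around $\Delta_i$.
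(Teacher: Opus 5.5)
Your argument works after one correction, and it takes a genuinely different route from the paper.

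\textbf{The correction.} Because $\Pic(\underline{T}_{F^s})=0$, Hochschild--Serre gives $\Br_1(\underline{T})/\Br(F)\cong \rH^2(F,\mathfrak X(\underline{T}))$, not $\rH^1$. This is the isomorphism $\Br_e(\underline{T})\cong \rH^2(F, X^*(\underline{T}_{F^s}))$ quoted in the appendix. As you wrote it, $\rH^1(F,\mathbb Z^n)=\mathrm{Hom}(\mathrm{Gal}(F^s/F),\mathbb Z^n)=0$, and so is its torsion. The group you land on is nevertheless the right one. The connecting map of $0\to\mathbb Z\to\mathbb Q\to\mathbb Q/\mathbb Z\to 0$ gives $\rH^2(F,\mathbb Z)\cong \rH^1(F,\mathbb Q/\mathbb Z)$. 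Under it, $\psi\otimes u$ corresponds to the cyclic class $(\psi,u)=\delta\psi\cup u$. That is exactly what your residue formula $\mathrm{res}_{\Delta_i}(\psi,u)=\langle u,v_i\rangle\,\psi$ uses, via $\mathrm{ord}_{\Delta_i}(u)=\langle u,v_i\rangle$.

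Two further points should be made explicit. First, injectivity into characters comes from the density of the image of $\mathbb A_F^\times/F^\times$ in $\mathrm{Gal}(F^s/F)^{\mathrm{ab}}$. Second, your equivalence between $m_i\,\mathrm{res}_{\Delta_i}\beta=0$ and $\chi_i^{m_i}=1$ needs $\rH^1(F,\mathbb Q/\mathbb Z)\to \rH^1(F(\Delta_i),\mathbb Q/\mathbb Z)$ to be injective. This holds because $\Delta_i$ is geometrically integral; the paper packages this input as the rationality of $\Delta_i$.

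\textbf{Comparison with the paper.} The paper transports Loughran's argument for arbitrary tori to function fields. It combines Sansuc's pairing between $\Sha(\underline{T})$ and $\mathfrak B(\underline{T})$, Tate--Nakayama duality on $\ell$-primary parts, local duality $\Br_e(\underline{T}_{F_c})\cong \underline{T}(F_c)^\sim$, finiteness of $\Sha(\underline{T})$, and $\mathfrak B(\underline{T})=0$ for rational tori. From these it gets $\Br_e(\underline{T})\cong(\underline{T}(\mathbb A_F)/\underline{T}(F))^\sim$. It then invokes the second diagram of \cite[Lemma 4.7]{Loughran} to turn the Campana residue conditions into $\chi_i^{m_i}=1$.

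You instead use that $\underline{T}$ is split. The whole duality then collapses to global class field theory for $F$, applied coordinatewise to $\underline{T}(\mathbb A_F)/\underline{T}(F)\cong(\mathbb A_F^\times/F^\times)^n$. The Lemma 4.7 diagram becomes a one-line cyclic-algebra computation. Finite order of characters in $(\underline{T}(\mathbb A_F)/\underline{T}(F))^\vee_{\mathbf m}$ is immediate, since the rays of any maximal cone form a basis of $N$.

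Your route is self-contained and more elementary, and it handles $p$-primary characters uniformly through class field theory. The paper's route is the one that survives for non-split tori, where $\rH^2(F,X^*(\underline{T}_{F^s}))$ no longer reduces to characters of $F$.

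\textbf{The pairing.} The pairing assertion is formal once the isomorphism is established: it is just evaluation of characters on classes in $\underline{T}(\mathbb A_F)/\underline{T}(F)$. The extension of local evaluation to boundary points of $\underline{X}(F_c)$ that you propose to check is not needed for the statement.
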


 Since \cite{Loughran} focuses on the case of number fields, we will verify the necessary adjustments for the function field case in Section~\ref{sec:appendix}.
By Lemma \ref{lemm:Loughran} we have
\[
\mathrm{Br}_1(\underline{X}, \Delta_{\bold m})^{\mathsf K}/\mathrm{Br}(F) \cong (\underline{T}(\mathbb A_F)/(\mathsf K \cdot \underline{T}(F)))^\vee_{\bold m},
\]
where $\mathrm{Br}_1(\underline{X}, \Delta_{\bold m})^{\mathsf K}$ is its subgroup consisting of elements which are trivial on $\mathsf K$.
Since the height function $\mathsf H$ and $\delta_{\bold m}$ are $\mathsf K$-invariant, by arguing as in \cite[Theorem 8.6]{CLTBT}, we obtain
\[
\sum_{b \in \mathrm{Br}_1(\underline{X}, \Delta_{\bold m})/\mathrm{Br}(F)} \hat{\tau}_{X, \Delta_{\bold m}}(b) = \sum_{b \in \mathrm{Br}_1(\underline{X}, \Delta_{\bold m})^{\mathsf K}/\mathrm{Br}(F)} \hat{\tau}_{X, \Delta_{\bold m}}(b).
\]
Finally for $\bold a \in \mathcal X(\underline{T})_{\bold m}$ and the corresponding $b \in \mathrm{Br}_1(\underline{X}, \Delta_{\bold m})^{\mathsf K}/\mathrm{Br}(F)$, we have $\tau_{\bold a} = \hat{\tau}_{X, \Delta_{\bold m}}(b)$. This explains the compatibility.

\section{Counting $\mathbb A^1$-curves on toric varieties}
\label{sec:A1}

Our next goal is to describe the counting function for $\mathbb{A}^{1}$-curves on toric varieties.  

\subsection{$\mathbb A^1$-curves on toric varieties}
We recall the set up.
Let $\mathbf k = \mathbb F_q$ be a finite field and $F = \mathbb F_q(t)$. Let $\underline{X}$ be a smooth projective toric variety defined over $\mathbf k$ of dimension $n$ with the open orbit $\underline{T}$ such that the full toric boundary divisor $\Delta = \sum_i \Delta_i$ is a SNC divisor. 
We also assume that $\underline{X}$ is split. Let $$D = \sum_{i \in \mathcal A} \Delta_i \leq \Delta,$$ be a reduced boundary divisor and let $X$ be the log scheme associated to $(\underline{X}, D)$. Set $X^\circ = X \setminus \mathrm{Supp}(D)$ and we assume that $X^\circ$ is geometrically separably $\mathbb A^1$-connected.

Let $\alpha$ be a nef class of $1$-cycles on $\underline{X}$.
As in Section~\ref{sec:birationalgeometry}, we consider the space of rational curves
\[
\underline{M}_\alpha^\circ,
\]
and its $T$-torsor
\[
\Phi_\alpha : \underline{M}_\alpha^\circ \to \underline{U}_{\bold r}.
\]
It will be convenient to define $\mathbb A^1$-curves on $(X, D)$ in terms of their intersections against the boundary divisors:
\begin{defi}
    A rational curve $[f : \underline{\mathbb P}^1 \to \underline{X}] \in \underline{M}_\alpha^\circ$ is an $\mathbb A^1$-curve in $X^\circ$ if the corresponding divisor $w = (w_i) = \Phi_\alpha([f])$ satisfies that for any $i$ with $\Delta_i \subset D$, $w_i$ is either supported at $\{\infty\}$ or empty. We call these conditions $\mathbb A^1$-conditions. Conversely if we have a morphism $f : \underline{\mathbb A}^1 \to \underline{U}$ with $f(\underline{\mathbb A}^1) \cap \underline{T} \neq \emptyset$, its closure defines an $\mathbb A^1$-curve for some nef class $\alpha$.
\end{defi}

Let $\underline{U}_{\bold r, D} \subset \underline{U}_{\bold r}$ be the reduced closed subscheme parametrizing divisors satisfying the $\mathbb A^1$-conditions.
We define
\[
\underline{M}_{\alpha, D}^\circ = \underline{M}_{\alpha}^\circ \times_{\underline{U}_{\bold r}} \underline{U}_{\bold r, D},
\]
and consider this scheme as the space of $\mathbb A^1$-curves of the class $\alpha$ on $(X, D)$.

The following definition is a key to the study of log Manin's conjecture for integral points:

\begin{defi}\label{defi:clemens-complex}
Let $(\underline{X},D)$ be a split SNC pair.  Write $\mathcal{A}$ for the finite set indexing the irreducible components of $D$.  The (geometric) Clemens complex of the pair is the poset whose elements have the form $(A,Z)$ where $A \subset \mathcal{A}$ and $Z$ is a non-empty irreducible component of the intersection $\cap_{i \in A} D_{i}$.  
We impose the order $(A_{1},Z_{1}) \leq (A_{2},Z_{2})$ if $A_{1} \subset A_{2}$ and $Z_{1} \supset Z_{2}$.
Then we define the dimension of $(A, Z)$ as the dimension of the poset $[(\emptyset, X), (A, Z)]$.

We include the formal pair $(\emptyset, \underline{X})$ as the unique minimal element of the Clemens complex.
\end{defi}

Note that when $(\underline{X},D)$ is a split toric pair, each non-empty intersection $\cap_{i \in A} D_{i}$ is automatically irreducible.  
Hence $A$ and $Z$ determine each other. 
Recall the fan $\Sigma_X$ from the end of \S \ref{sss:A1-curve},
consisting of cones in $\Sigma$ spanned by rays corresponding to irreducible components of $D$.
Alternatively, $\Sigma_X$ can be obtained by removing cones in $\Sigma$ that contain rays in $\Sigma_D$. 
(For the definition of $\Sigma_D$, see the proof of Proposition~\ref{prop:equivconditionstoric}.) 
We may identify the Clemens complex of $X$ with the set of cones in $\Sigma_X$, and identify its  partial order with the inclusions of cones in  $\Sigma_X$.

\begin{defi}
Let $(X,D)$ be a geometrically separably $\mathbb A^1$-connected smooth projective split toric pair.  For each element $(A,Z)$ in the Clemens complex, we define the face $\mathcal{F}_{A,Z}$ of $\Nef_{1}(X)$ to be the set of numerical classes which have vanishing intersection against $D_{i}$ for every $i \in \mathcal{A} \setminus A$.
\end{defi}

The following shows that the possible numerical classes of $\mathbb{A}^{1}$-curves for $(X,D)$ are controlled by the Clemens complex.

\begin{prop}
\label{prop:numericalclasswithA1}
Let $(X,D)$ be a geometrically separably $\mathbb A^1$-connected smooth projective split toric pair and fix $\alpha \in \Nef_{1}(X)_{\mathbb Z}$. Then the moduli space $M_{\alpha, D}^\circ$ parametrizing $\mathbb{A}^{1}$-curves of class $\alpha$  is non-empty if and only if $\alpha$ lies in $\mathcal{F}_{A,Z}$ for some $(A, Z)$ in the Clemens complex.

Furthermore, if $M_{\alpha, D}^\circ$ is non-empty then it is irreducible and has dimension $-K_X.\alpha + n = -(K_{\underline{X}} + D).\alpha + n$.
\end{prop}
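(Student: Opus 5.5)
The plan is to reduce everything to the base $\underline{U}_{\bold r, D}$ of the $\underline{T}$-torsor $\Phi_\alpha$ and to describe that base explicitly. Write $r_i = \Delta_i.\alpha$. By definition, a point $w=(w_i)$ of $\underline{U}_{\bold r,D}$ has $w_i = r_i\cdot\infty$ for every $i \in \mathcal A$. The components $w_j$ with $j \notin \mathcal A$ are arbitrary, subject to the open condition \eqref{eq:supportcondition} of Lemma~\ref{lemm:Ucharacterization}. Hence $\underline{U}_{\bold r,D}$ is the intersection of the open subset $\underline{U}_{\bold r}\subset \underline{H}_{\bold r}$ with the closed subscheme
\[
\underline{H}_{\bold r}^{D} := \{ w \in \underline{H}_{\bold r} \, | \, w_i = r_i \cdot \infty \text{ for all } i \in \mathcal A\} \cong \prod_{j \in \Sigma^{(1)}\setminus \mathcal A} \underline{\mathrm{Hilb}}^{[r_j]}(\underline{\mathbb P}^1) \cong \prod_{j \notin \mathcal A} \underline{\mathbb P}^{r_j}.
\]
This product is irreducible of dimension $\sum_{j \notin \mathcal A} r_j$. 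So $\underline{U}_{\bold r,D}$ is an open subset of an irreducible variety, and everything hinges on deciding when it is non-empty.

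\emph{Non-emptiness.} Put $A' = \{ i \in \mathcal A \, | \, r_i > 0\}$. For any $w \in \underline{H}^D_{\bold r}$, the point $\infty$ lies in $\mathrm{Supp}(w_i)$ for every $i \in A'$. Condition \eqref{eq:supportcondition} therefore forces $\cap_{i \in A'} \Delta_i \neq \emptyset$, so $A'$ spans a cone of $\Sigma_X$, i.e.\ an element $(A',Z')$ of the Clemens complex. Since $r_i = 0$ for $i \in \mathcal A\setminus A'$, we get $\alpha \in \mathcal F_{A',Z'}$.

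Conversely, suppose $\alpha \in \mathcal F_{A,Z}$. Then $r_i = 0$ for $i \in \mathcal A \setminus A$, so $A' \subset A$ and $\cap_{i\in A'}\Delta_i \supset Z \neq \emptyset$. Now choose the $w_j$ for $j \notin \mathcal A$ to be reduced, supported away from $\infty$, with pairwise disjoint supports; this is possible over a large enough extension of $\mathbf k$, and it suffices to exhibit a geometric point since $\underline{U}_{\bold r,D}$ is open in the geometrically irreducible variety $\underline{H}^D_{\bold r}$. With this choice, the only subset $I$ with $\cap_{i \in I}\mathrm{Supp}(w_i) \neq \emptyset$ and $|I|\geq 2$ is (a subset of) $A'$ at the point $\infty$, together with singletons, all of which satisfy \eqref{eq:supportcondition}. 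Hence $\underline{U}_{\bold r,D}\neq\emptyset$, and $\underline{M}^\circ_{\alpha,D} \neq \emptyset$ by Lemma~\ref{lemm:isotriviality}. I expect this verification of \eqref{eq:supportcondition} to be the main (though mild) point to get right. In particular, one must check that the balancing condition $\sum r_i v_i = 0$ imposes nothing extra, which holds automatically because $\alpha$ is an honest curve class. I would also note that the $\mathbb A^1$-connectedness hypothesis is only needed to make sense of the faces; it is not used in this argument.

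\emph{Irreducibility and dimension.} When non-empty, $\underline{U}_{\bold r,D}$ is a non-empty open subset of $\prod_{j\notin\mathcal A}\underline{\mathbb P}^{r_j}$, so it is geometrically irreducible of dimension $\sum_{j \notin \mathcal A} r_j$. The base change of $\Phi_\alpha$ to $\underline{U}_{\bold r,D}$ is smooth with all fibers isomorphic to $\underline{T}$, by Lemmas~\ref{lemm:smoothnessoffibration} and \ref{lemm:isotriviality}. A smooth (hence open) morphism with geometrically irreducible fibers onto an irreducible base has irreducible total space. Thus $\underline{M}^\circ_{\alpha,D}$ is irreducible of dimension
\[
n + \sum_{j\notin \mathcal A} r_j = n + (\Delta - D).\alpha = n - (K_{\underline{X}} + D).\alpha,
\]
using $-K_{\underline{X}} = \Delta$. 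This equals $-K_X.\alpha + n$, as claimed.
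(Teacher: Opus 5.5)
Your proposal is correct and follows essentially the same route as the paper: you reduce to the base $\underline{U}_{\bold r,D}$ of the $\underline{T}$-torsor $\Phi_\alpha$, and you apply Lemma~\ref{lemm:Ucharacterization} in both directions of the non-emptiness claim. Your set $A'=\{i\in\mathcal A : r_i>0\}$ is exactly the paper's stratum containing $f(\infty)$, and you get irreducibility and dimension from $\underline{U}_{\bold r,D}$ being open in $\prod_{j\notin\mathcal A}\underline{\mathbb P}^{r_j}$ together with Lemma~\ref{lemm:isotriviality}; your explicit choice of the $w_j$ and the ``smooth map with irreducible fibers over an irreducible base'' step fill in details the paper leaves implicit.
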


\begin{proof}
We may assume that the ground field $\mathbf k$ is an algebraically closed field.
If there is an $\mathbb{A}^{1}$-curve of class $\alpha$, then $f: \underline{\mathbb{P}}^{1} \to \underline{X}$ must take $f(\infty)$ to some strata of the boundary $D$, thus identifying an element $(A,Z)$ in the Clemens complex.  (When $f(\infty)$ is in $\underline{X}^\circ$, $A$ is empty.) Note that the curve $f(\underline{\mathbb{P}}^{1})$ must be disjoint from every irreducible component of $D$ not containing $Z$; thus $\alpha$ lies in $\mathcal{F}_{A,Z}$. 
Conversely, if $\alpha \in \mathcal F_{A, Z}$, then by Lemma~\ref{lemm:Ucharacterization}, $\underline{U}_{\bold r, D}$ is non-empty.

If $M_{\alpha, D}^\circ$ is non-empty, then $\underline{U}_{\bold r, D} $ is irreducible and has dimension $-K_X.\alpha$. Thus the last claim follows from Lemma~\ref{lemm:isotriviality}.
\end{proof}

Note that for a geometrically separably $\mathbb{A}^{1}$-connected smooth projective split toric pair $(X,D)$, the divisor $-K_{X} = -(K_{\underline{X}}+D)$ need not be big.  Nevertheless, one should expect a Northcott property for $\mathbb{A}^{1}$-curves with respect to this polarization.  The required positivity is provided by the following result.

\begin{prop}
\label{prop:northcott}
    Let $(X,D)$ be a geometrically separably $\mathbb A^1$-connected smooth projective split toric pair.  Let $\mathcal{F}$ be the face of the nef cone of curves perpendicular to $-K_{X}$.  Then for any $(A,Z)$ in the Clemens complex we have $\mathcal{F}_{A,Z} \cap \mathcal{F} = \{0\}$.
\end{prop}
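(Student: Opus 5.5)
Let $\alpha \in \mathcal{F}_{A,Z} \cap \mathcal{F}$; the plan is to show $\alpha = 0$ by writing $-K_X$ as a sum of boundary divisors and using nefness of $\alpha$ together with the $\mathbb{A}^1$-connectedness criterion (3) of Proposition~\ref{prop:equivconditionstoric}. On the toric variety we have $-K_{\underline{X}} = \sum_{i \in \Sigma^{(1)}} \Delta_i$, hence
\[
-K_X = -(K_{\underline{X}} + D) = \sum_{i \notin \mathcal{A}} \Delta_i ,
\]
the sum over the toric boundary components \emph{not} contained in $D$. Since $\alpha$ is a nef curve class and each $\Delta_i$ is effective, every term $\Delta_i.\alpha$ is $\geq 0$. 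So $-K_X.\alpha = 0$ forces $\Delta_i.\alpha = 0$ for every $i \notin \mathcal{A}$.

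Next use $\alpha \in \mathcal{F}_{A,Z}$: this gives $\Delta_i.\alpha = 0$ for $i \in \mathcal{A}\setminus A$. Thus the only components that can meet $\alpha$ nontrivially are $\Delta_i$ with $i \in A$, where $A = \sigma^{(1)}$ for some cone $\sigma \in \Sigma_X$. Now identify $N_1(\underline{X})_{\mathbb{Z}}$ with the kernel of $\mathbb{Z}^{\Sigma^{(1)}} \to N$, $\bold r \mapsto \sum_i r_i v_i$, via $\alpha \mapsto (\Delta_i.\alpha)_i$; this is the exact sequence recalled before Theorem~\ref{theo:Campana}. The balancing condition then reads
\[
\sum_{i \in A} (\Delta_i.\alpha)\, v_i = 0 .
\]
But $A$ is the ray set of a single smooth cone $\sigma$, so the $v_i$ for $i \in A$ are linearly independent. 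Hence $\Delta_i.\alpha = 0$ for all $i$, and because the map $N_1(\underline{X}) \to \mathbb{R}^{\Sigma^{(1)}}$ is injective, $\alpha = 0$.

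The argument above does not actually need the $\mathbb{A}^1$-connectedness hypothesis beyond working with the Clemens complex. As an alternative route, and as a consistency check, one could drop the cone structure of $A$. Then $\sum_{i \in \mathcal{A}} r_i v_i = 0$ with all $r_i \geq 0$, and condition (3)/(4) of Proposition~\ref{prop:equivconditionstoric} (linear independence of the $D_i$ in $\Pic_{\mathbb{Q}}$, dually spanning by the non-$D$ rays) would also rule out such a relation. I expect the main step needing care to be the identification of the face with the kernel description together with the injectivity claim. This is standard for split smooth projective toric varieties, since $N_1$ is dual to $\Pic$, which is generated by the $\Delta_i$. The key input is simply the linear independence of the ray generators of a smooth cone.
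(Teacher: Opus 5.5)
Your main argument is correct and is essentially the paper's proof, phrased dually. Both arguments first use $-K_X=\sum_{i\notin\mathcal A}\Delta_i$ and nefness to get $\Delta_i.\alpha=0$ for all $i\notin A$, and then use that $A$ lies in a smooth cone. The paper phrases that last step as ``the $\Delta_i$ with $i\notin A$ span $\Pic(\underline X)_{\mathbb Q}$'', so a positive combination of them is big. Via $0\to M\to\mathbb Z^{\Sigma^{(1)}}\to\Pic(\underline X)\to 0$, this is exactly dual to your combination of the balancing condition, linear independence of $\{v_i\}_{i\in A}$, and injectivity of $N_1(\underline X)\to\mathbb R^{\Sigma^{(1)}}$. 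You are right that neither argument uses $\mathbb A^1$-connectedness.

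Your closing ``alternative route'' is false, however, and should be dropped. Conditions (3)/(4) of Proposition~\ref{prop:equivconditionstoric} do not exclude a nonnegative relation $\sum_{i\in\mathcal A}r_iv_i=0$.

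For a counterexample, let $\underline X$ be the blow-up of $\mathbb P^1\times\mathbb P^1$ at a torus-fixed point, with rays $(\pm1,0),(0,\pm1),(1,1)$. Let $D$ be the sum of the divisors corresponding to $(1,0)$ and $(-1,0)$. The remaining rays $(0,\pm1),(1,1)$ span $N_{\mathbb Q}$, so $(X,D)$ is separably $\mathbb A^1$-connected. Now take the vector with entry $1$ on the two components of $D$ and $0$ on the other three $\Delta_i$. It satisfies the balancing condition $(1,0)+(-1,0)=0$, so it is a curve class $\alpha$. Since all its entries are $\geq 0$, $\alpha$ is nef, and $-K_X.\alpha=0$.

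So $\alpha$ is a nonzero element of $\mathcal F$. This is exactly why $-K_X$ need not be big, as the paper remarks before the proposition. It shows that the essential input is the cone condition coming from the Clemens complex (violated here, since $(1,0)$ and $(-1,0)$ span no cone), not $\mathbb A^1$-connectedness.
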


\begin{proof}
    As before we write $\mathcal{A}$ for the set of irreducible components of $D$.  If $\alpha \in \mathcal{F}_{A,Z}$, then $\alpha$ has vanishing intersection against every divisor in $\mathcal{A} \setminus A$.  If furthermore $\alpha \in \mathcal{F}$, then $\alpha$ has vanishing intersection against every divisor in $\Sigma^{(1)} \setminus A$.  Since $A$ is a subset of the rays in a cone $\sigma$ of our fan, the divisors in $\Sigma^{(1)} \setminus A$ span $\Pic(X)_{\mathbb{Q}}$ and so there is a positive linear combination of these divisors that is big.  We conclude that $\alpha = 0$.
\end{proof}

Thus to count $\mathbb{A}^{1}$-curves we should sum up contributions from the faces $\mathcal{F}_{A,Z}$ as we vary $(A,Z)$ in the Clemens complex.  We next discuss the dimensions of the faces $\mathcal{F}_{A,Z}$ following \cite{Wilsch} and \cite{Santens}.  It is possible for a face $\mathcal{F}_{A,Z}$ to have pathological behavior in the sense that its dimension fails to be predicted by the Clemens complex.   For example, \cite{Wilsch} identifies a toric pair $(X,D)$ and a non-trivial element $(A,Z)$ in the Clemens complex such that $\mathcal{F}_{A,Z} = \mathcal{F}_{\emptyset,X}$.   
We can systematically address such pathologies using analytic obstructions in the sense of \cite[Definition 3.14]{Santens}.

\begin{defi}
    Let $(\underline{X},D)$ be a split SNC pair. Let $(A, Z)$ be an element of the Clemens complex of this pair.
    We define
    \[
    \underline{X}^\circ_Z = \underline{X} \setminus (\cup_{(A', Z') \not< (A, Z)}Z') = \underline{X}^\circ \cup \bigcup_{(A', Z') \leq (A, Z)}Z'^\circ.
    \]
    We say $(A, Z)$ admits an analytic obstruction if 
    \[
    \rH^0(\underline{X}^\circ_Z, \mathcal O_{\underline{X}^{\circ}_{Z}}) \neq \mathbf k.
    \]
\end{defi}

\begin{prop}
\label{prop:obstruction}
Let $(X,D)$ be a geometrically separably $\mathbb A^1$-connected smooth projective split toric pair. Let $(A, Z)$ be an element of the Clemens complex.  The following conditions are equivalent:
\begin{enumerate}
    \item There is no analytic obstruction for $(A, Z)$.
    \item The divisor $\sum_{i \in \mathcal{A} \backslash A} D_{i}$ has Iitaka dimension $0$.
    \item $\underline{X}^{\circ}_{Z}$ is rationally connected.
    \item There is a family of $\mathbb A^1$-curves passing through $2$ general points of $\underline{X}^\circ$ which map $\infty$ to $\underline{Z}^\circ$.
\end{enumerate}
\end{prop}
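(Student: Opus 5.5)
The plan is to translate all four conditions into one convex-geometric statement about the fan $\Sigma$. Set $E_Z = \sum_{i \in \mathcal A \setminus A} D_i$ and $S = \Sigma^{(1)} \setminus (\mathcal A \setminus A)$. The claim is that each of (1)--(4) is equivalent to
\[
(\star)\qquad \text{the rays } \{v_i\}_{i \in S} \text{ positively span } N_{\mathbb R},
\]
equivalently (by Farkas/Gordan duality) there is no $0 \neq m \in M$ with $\langle m, v_i\rangle \geq 0$ for all $i \in S$, equivalently there is a relation $\sum_{i \in S} r_i v_i = 0$ with all $r_i > 0$ integers. Such a relation is exactly a class $\alpha \in N_1(\underline X)_{\mathbb Z}$ with $\Delta_i.\alpha = r_i > 0$ for $i \in S$ and $\Delta_i.\alpha = 0$ for $i \in \mathcal A \setminus A$. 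Throughout I assume $\mathbf k$ is algebraically closed.

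\textbf{Step 1: $\underline X^\circ_Z$ and (1) $\Leftrightarrow$ (2) $\Leftrightarrow$ $(\star)$.}
\begin{enumerate}
\item Describe $\underline X^\circ_Z$ torically. In the toric case $Z$ determines $A$, and the strata $Z'$ with $(A', Z') \not\leq (A, Z)$ are the orbit closures of cones $\tau$ with $\tau^{(1)} \cap \mathcal A \not\subset A$.
\item Hence $\underline X^\circ_Z$ is the toric variety of the subfan $\Sigma_Z$ of cones whose $\mathcal A$-rays lie in $A$; its rays are exactly $S$.
\item The complement $\underline X \setminus \underline X^\circ_Z$ is $\operatorname{Supp} E_Z$ together with strata of codimension $\geq 2$.
\item By normality (Hartogs), $\mathrm H^0(\underline X^\circ_Z, \mathcal O) = \bigcup_k \mathrm H^0(\underline X, kE_Z)$. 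This gives (1) $\Leftrightarrow$ (2).
\item Characters $\chi^m$ span these function spaces. $\chi^m$ is regular on $\underline X^\circ_Z$ iff $\langle m, v_i \rangle \geq 0$ for all $i \in S$. So (1) $\Leftrightarrow$ $(\star)$.
\end{enumerate}

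\textbf{Step 2: (3) $\Rightarrow$ (1) and (4) $\Rightarrow$ (3).}
\begin{enumerate}
\item For (3) $\Rightarrow$ (1): a non-constant regular function on $\underline X^\circ_Z$ is constant on every complete rational curve. So no such curve joins two general points with different values, and $\underline X^\circ_Z$ is not rationally connected.
\item For (4) $\Rightarrow$ (3): let $f$ be an $\mathbb A^1$-curve with $f(\infty) \in Z^\circ$. It avoids $D_i$ for $i \notin A$ on $\mathbb A^1$.
\item At $\infty$, $f$ meets only strata $\leq Z$. Therefore the closed curve $f(\underline{\mathbb P}^1)$ lies in $\underline X^\circ_Z$.
\item A family of such curves through two general points of $\underline X^\circ$ then gives rational connectedness of $\underline X^\circ_Z$.
\end{enumerate}

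\textbf{Step 3: $(\star)$ $\Rightarrow$ (4), the main construction.}
\begin{enumerate}
\item Take the positive relation $\sum_{i \in S} r_i v_i = 0$ and let $\alpha$ be the corresponding nef class. Then $\alpha \in \mathcal F_{A,Z}$.
\item Choose divisors $w_i = r_i \cdot \infty$ for $i \in A$, and $w_i$ a sum of general distinct points of $\mathbb A^1$ for $i \in S \setminus A$.
\item Check condition \eqref{eq:supportcondition} of Lemma~\ref{lemm:Ucharacterization}. Supports meet only at $\infty$, where the indices form $A$, and $\bigcap_{i \in A} \Delta_i = Z \neq \emptyset$. So $w \in \underline U_{\bold r, D}$.
\item Lifting via $\Phi_\alpha$ gives an $\mathbb A^1$-curve with $f(\infty) \in Z^\circ$, since $w_i$ has positive multiplicity at $\infty$ exactly for $i \in A$.
\end{enumerate}

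\textbf{Main obstacle: the two-point condition in (4).} It remains to show these curves pass through two general points. I would argue in Cox coordinates, writing $f = [\mathsf s_i]$ with $\mathsf s_i = \prod_k (t - p_{ik})$ for $i \in S \setminus A$ and $\mathsf s_i$ constant for $i \in A$, in an affine chart $t$.
\begin{enumerate}
\item The fiber of $\Phi_\alpha$ is a $\underline T$-torsor (Lemma~\ref{lemm:isotriviality}). By $\underline T$-equivariance we may fix $f(0) = 1 \in \underline T$.
\item It then suffices that $f(1)$ sweeps a dense subset of $\underline T$ as the points $p_{ik}$ vary.
\item In torus coordinates, $\chi^m(f(1)) = \prod_{i \in S \setminus A} \prod_k \bigl((1 - p_{ik})/(-p_{ik})\bigr)^{\langle m, v_i\rangle}$.
\item Each factor $(1 - p)/(-p)$ takes arbitrary values in $\mathbb G_m$ as $p$ varies.
\item So the image is the subtorus generated by the cocharacters $v_i$ with $i \in S \setminus A$, which must be all of $N$.
\item That these $v_i$ span $N_{\mathbb Q}$ is where I expect difficulty. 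For $i \in A$, $v_i$ lies in the cone of $Z$, so it is a non-negative combination of the other rays via the positive relation. I would try to exploit this, together with the freedom to replace $\bold r$ by a multiple and to add further positive relations, and in the worst case perturb $\infty$-multiplicities. Separability of the resulting map should follow from smoothness of $\Phi_\alpha$ (Lemma~\ref{lemm:smoothnessoffibration}).
\end{enumerate}
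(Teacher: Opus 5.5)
Your route is correct and genuinely different from the paper's. However, you leave open the step you single out as the main obstacle, and the fixes you sketch would not close it. What closes it is the hypothesis itself. Since $A\subset\mathcal A$, you have $S\setminus A=\Sigma^{(1)}\setminus\mathcal A$. The statement that $\{v_i\}_{i\in\Sigma^{(1)}\setminus\mathcal A}$ spans $N_{\mathbb Q}$ is exactly condition (3) of Proposition~\ref{prop:equivconditionstoric}. That condition follows from geometric separable $\mathbb A^1$-connectedness of $(X,D)$ via the implications (1) $\Rightarrow$ (2) $\Rightarrow$ (3) there. The paper uses the same fact implicitly: it is what gives $\mathrm{Cone}(v_i\mid i\notin\mathcal A)$ non-empty interior.

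Your proposed manoeuvres cannot substitute for this. For $i\in A$ the section $\mathsf s_i$ is constant on $\mathbb A^1$, and only finite roots enter $\mathsf s_i(1)/\mathsf s_i(0)$. So neither the rays in $A$, nor the multiplicities at $\infty$, nor rescaling $\mathbf r$ changes the subtorus swept out by $f(1)f(0)^{-1}$. If the spanning failed, none of them would rescue the argument.

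Also, (4) only requires dominance of the two-point evaluation, so you need no separability argument. Separability would not follow from smoothness of $\Phi_\alpha$ anyway: for your family it holds exactly when the lattice generated by these $v_i$ has index in $N$ prime to $\mathrm{char}\,\mathbf k$.

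Structurally, the paper proves the cycle (1) $\Rightarrow$ (4) $\Rightarrow$ (3) $\Rightarrow$ (2) $\Rightarrow$ (1).
\begin{itemize}
\item For (1) $\Rightarrow$ (4) it finds, by a separating-hyperplane argument, a contact order $\mathbf c_\infty$ in the relative interior of the cone of $A$ with $-\mathbf c_\infty$ interior to $\mathrm{Cone}(v_i\mid i\notin\mathcal A)$. This is equivalent to your strictly positive relation on $S$. It then invokes \cite[Theorem 8.2]{CLT25} to produce the curves.
\item For (3) $\Rightarrow$ (2) it intersects a complete rational curve in $\underline X^\circ_Z$ with a moving member of $|kE_Z|$.
\item For (2) $\Rightarrow$ (1) it contracts the $D_i$, $i\in\mathcal A\setminus A$, to a normal projective toric $Y$.
\end{itemize}
You proceed differently at each of these points.
\begin{itemize}
\item You use $\underline X^\circ_Z=\underline X\setminus\Supp(E_Z)$; the complement is exactly $\Supp(E_Z)$, with no extra codimension-two strata. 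Then (1) $\Leftrightarrow$ (2) is immediate from $\mathrm H^0(\underline X^\circ_Z,\mathcal O)=\bigcup_k\mathrm H^0(\underline X,kE_Z)$.
\item You get (3) $\Rightarrow$ (1) from constancy of regular functions on complete rational curves.
\item You replace the appeal to \cite{CLT25} by an explicit construction through Lemma~\ref{lemm:Ucharacterization} and a torus computation in Cox coordinates.
\end{itemize}
Your version is more elementary and self-contained within the framework of Section~\ref{sec:birationalgeometry}, and it shows exactly where the $\mathbb A^1$-connectedness hypothesis enters. The paper's version is shorter and leans on the general log deformation theory of \cite{CLT25}.
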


\begin{proof}
    We may assume that our ground field is algebraically closed.
    
    (1) $\implies$ (4): suppose that there is no analytic obstruction. 
    This means that if we let $\Sigma_Z$ be the fan associated to $\underline{X}^\circ_Z$, then $|\Sigma_Z|^\vee = 0$. In other words, $\mathrm{Cone}(\Sigma_Z) = N_{\mathbb R}$.
    We claim that there exists a contact order $\mathbf c_\infty$ in the sense of \cite[Section 8.1.2]{CLT25} which is positive for every $i \in A$ and $0$ for $i \in \mathcal{A} \backslash A$ such that
    \[
    -\mathbf c_\infty \in \mathrm{Cone}(v_i \,|\, i \not\in \mathcal A)^\circ.
    \]
   Suppose that this is not true. Then one can find a hyperplane which separates the two cones
   \[
   \mathrm{Cone}(v_i \,|\, i \not\in \mathcal A), \quad \mathrm{Cone}(-v_i \,|\, i \in A).
   \]
   However, this contradicts with $\mathrm{Cone}(\Sigma_Z) = N_{\mathbb R}$.
   Thus after choosing $\mathbf c_\infty$ general, we may write
   \[
   -\mathbf c_\infty = \sum_{i \not\in \mathcal A} c_iv_i,
   \]
   with $c_i \geq 0$ such that $\{v_i | c_i > 0\}$ spans $N_{\mathbb R}$.
   Hence our assertion follows from \cite[Theorem 8.2]{CLT25}.   

   (4) $\implies$ (3): obvious

   (3) $\implies$ (2): suppose that $\underline{X}^{\circ}_{Z}$ is rationally connected.  If $\sum_{i \in \mathcal{A} \backslash A} D_{i}$ had positive Iitaka dimension, it would be linearly equivalent to a divisor $L$ whose support contains a general point $x$ of $\underline{X}^{\circ}$.  Using the rationally connected condition, we can find a rational curve $\underline{f}: \underline{\mathbb{P}}^{1} \to \underline{X}_{Z}^{\circ}$ through $x$ that is not contained in $\Supp(L)$.  Such a curve must have positive intersection against $L$, contradicting the fact that $\underline{f}(\underline{\mathbb{P}}^{1}) \subset X \backslash (\cup_{i \in \mathcal A\setminus A} D_i)$.

   (2) $\implies$ (1): Since $\sum_{i \in \mathcal{A} \backslash A} D_{i}$ has Iitaka dimension $0$, it can be contracted by a birational contraction $\phi: X \dashrightarrow Y$ to a normal projective toric variety $Y$.  Since $Y$ and $\underline{X}^{\circ}_{Z}$ are isomorphic outside of codimension $\geq 2$ subsets, we have $H^{0}(\underline{X}^{\circ}_{Z},\mathcal{O}_{\underline{X}^{\circ}_{Z}}) = H^{0}(Y,\mathcal{O}_{Y}) = \mathbf k$.
\end{proof}

\begin{rema}
    Even when $(A,Z)$ admits an analytic obstruction, it is possible for there to be $\mathbb{A}^{1}$-curves meeting $\underline{X}^{\circ}$ whose closure meets $\underline{Z}^{\circ}$. 
    However, if $\sum_{i \in \mathcal{A} \backslash A} D_{i}$ has positive Iitaka dimension then all the $\mathbb{A}^{1}$-curves lying on the corresponding face $\mathcal{F}_{A,Z}$ will be contained in the fibers of a non-trivial toric rational map from $X$. 
    Over a finite field such curves will never be Zariski dense.  
\end{rema}

Furthermore, the following result shows that when there is no analytic obstruction the dimension of the face $\mathcal{F}_{A,Z}$ can be computed directly from the Clemens complex.

\begin{coro}
    Assume that $A$ has no analytic obstruction. Then the dimension of $\mathcal{F}_{A,Z}$ is the sum of $\rho(\underline{X}^\circ)$ plus the dimension of $(A,Z)$ as an element of the Clemens complex.
  
\end{coro}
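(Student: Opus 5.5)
We prove the formula by linear-algebra duality between $\Nef_1(\underline{X})$ and the effective cone of divisors, using the Iitaka dimension criterion of Proposition~\ref{prop:obstruction}.

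First we identify the two numbers on the right-hand side. An element $(A,Z)$ of the Clemens complex of a toric pair corresponds to a cone of $\Sigma_X$ with rays indexed by $A$. Hence the interval $[(\emptyset,\underline{X}),(A,Z)]$ is the face poset of a simplex on $|A|$ vertices, and the dimension of $(A,Z)$ is $|A|$. Since $(X,D)$ is $\mathbb A^1$-connected, Proposition~\ref{prop:equivconditionstoric}(4) says the components $D_i$, $i\in\mathcal A$, are linearly independent in $\Pic(\underline{X})_{\mathbb Q}$. Because $\Pic(\underline{X}^\circ)=\Pic(\underline{X})/\langle D_i\rangle$, we get $\rho(\underline{X}^\circ)=\rho(\underline{X})-|\mathcal A|$. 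So the claim is
\[
\dim \mathcal F_{A,Z}=\rho(\underline{X})-|\mathcal A\setminus A| .
\]

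For the upper bound, note that $\mathcal F_{A,Z}$ lies in the common kernel of the functionals $D_i\cdot(-)$ for $i\in\mathcal A\setminus A$. These functionals are linearly independent because they are a subset of an independent set. Their common kernel therefore has dimension exactly $\rho(\underline{X})-|\mathcal A\setminus A|$.

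For the lower bound, put $E=\sum_{i\in\mathcal A\setminus A}D_i$. Then $\mathcal F_{A,Z}=\Nef_1(\underline{X})\cap E^{\perp}$, because a nef class has non-negative intersection with each $D_i$, so it kills $E$ exactly when it kills every $D_i$ with $i\in\mathcal A\setminus A$. On a toric variety, $\Nef_1$ is the dual of the polyhedral cone $\overline{\mathrm{Eff}}^1=\mathrm{Cone}(\Delta_j)$. By polyhedral duality, the face $\Nef_1\cap E^\perp$ has dimension $\rho(\underline{X})-\dim\mathcal G$, where $\mathcal G$ is the smallest face of $\mathrm{Cone}(\Delta_j)$ containing $E$. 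It therefore suffices to show $\dim\mathcal G=|\mathcal A\setminus A|$.

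This is the key step, and it uses Proposition~\ref{prop:obstruction}: since $(A,Z)$ has no analytic obstruction, $E$ has Iitaka dimension $0$. Let $\Delta_j$ be any generator of $\mathcal G$. Since $E$ lies in the relative interior of $\mathcal G$, the class $nE-\Delta_j$ lies in $\mathcal G$ for $n\gg0$, so $nE\sim\Delta_j+G$ with $G$ an effective torus-invariant divisor. Iitaka dimension $0$ means the polytope $P_{nE}$ is a single point, so $nE$ has a unique effective torus-invariant representative, namely $nE$ itself. Hence $\Delta_j+G=nE$, which forces $j\in\mathcal A\setminus A$. Thus $\mathcal G$ is generated by $\{D_i\}_{i\in\mathcal A\setminus A}$. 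These classes are linearly independent, so $\dim\mathcal G=|\mathcal A\setminus A|$.

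Combining the two bounds gives $\dim\mathcal F_{A,Z}=\rho(\underline{X})-|\mathcal A\setminus A|=\rho(\underline{X}^\circ)+|A|$, as required.
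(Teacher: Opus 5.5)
Your argument is correct and follows the paper's route with the details filled in. The paper also uses Proposition~\ref{prop:obstruction}(2) to see that the $D_i$, $i\in\mathcal A\setminus A$, span a face of the effective cone with each $D_i$ extremal, takes $\mathcal F_{A,Z}$ to be the dual face, and gets $\rho(\underline{X}^\circ)=\rho(\underline{X})-\#\mathcal A$ from Proposition~\ref{prop:equivconditionstoric}. Your minimal-face computation justifies the face claim, which the paper only asserts, and your separate upper-bound paragraph is redundant given the exact duality formula.

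One small repair is needed. An integral class in $\mathcal G$ need not be a non-negative \emph{integral} combination of boundary divisors, since the effective monoid of a toric variety need not be saturated. So replace $nE-\Delta_j$ by a positive multiple $N(nE-\Delta_j)$ before invoking that $|NnE|$ consists of the single divisor $NnE$; nothing else in your argument changes.
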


\begin{proof}
   By Proposition~\ref{prop:obstruction} (2), $D_i (i \in \mathcal A \setminus A)$ spans an extremal face of the effective cone of divisors such that each $D_i$ is an extremal ray. Since $\mathcal F_{A, Z}$ is the dual face of this face, we conclude
   \[
   \dim \mathcal F_{A, Z} = \rho(\underline{X}) - (\#\mathcal A- \#A).
   \]
   On the other hand, by Proposition~\ref{prop:equivconditionstoric}, we have
   $\rho(\underline{X}^\circ) = \rho(\underline{X}) - \#\mathcal A$. Thus our assertion follows.
\end{proof}

\begin{rema}
    The above corollary fails when $A$ admits an analytic obstruction. See \cite{Wilsch} for such an example.
\end{rema}

\subsection{The virtual height zeta function for $\mathbb A^1$-curves}

Now we proceed as in Section~\ref{sec:Campana} and set up the virtual height zeta function for $\mathbb A^1$-curves.
 As before, first one should note the following lemma:

\begin{lemm}
\label{lemm:countingproductII}
    We have
    \[
    \#\underline{M}_{\alpha, D}^\circ(\mathbf k) = (q-1)^n \#\underline{U}_{\bold r, D}(\mathbf k).
    \]
\end{lemm}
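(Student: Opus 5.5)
The identity follows from the isotriviality of $\Phi_\alpha$, exactly as in Lemma~\ref{lemm:countingproduct}. By definition,
\[
\underline{M}_{\alpha, D}^\circ = \underline{M}_{\alpha}^\circ \times_{\underline{U}_{\bold r}} \underline{U}_{\bold r, D},
\]
so the map $\Phi_\alpha$ restricts to a morphism $\underline{M}_{\alpha, D}^\circ \to \underline{U}_{\bold r, D}$. Its fibers over $\mathbf k$-points are the fibers of $\Phi_\alpha$ itself. We count $\mathbf k$-points of $\underline{M}_{\alpha, D}^\circ$ by summing over $w \in \underline{U}_{\bold r, D}(\mathbf k)$ the number of $\mathbf k$-points in the fiber $\Phi_\alpha^{-1}(w)$.

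By Lemma~\ref{lemm:isotriviality}, applied with $\mathbf k' = \mathbf k$, each fiber over a $\mathbf k$-point $w$ is isomorphic to $\underline{T}$. Since $\underline{X}$ is split, $\underline{T} \cong \underline{\mathbb G}_m^n$, and so $\#\underline{T}(\mathbf k) = (q-1)^n$.

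The one point needing care is that the fiber over a $\mathbf k$-point is genuinely $\underline{T}$ over $\mathbf k$, and not a nontrivial twisted form. This is built into the proof of Lemma~\ref{lemm:isotriviality}. A $\mathbf k$-point of $\underline{H}_{\bold r}$ lifts to sections $\mathsf s_i$ defined over $\mathbf k$, because each factor $\underline{\mathrm{Hilb}}^{[r_i]}(\underline{\mathbb P}^1)$ is a projective space of sections and every $\mathbf k$-point of a projective space lifts to a nonzero vector. The fiber is then the quotient $\underline{\mathbb G}_m^{\Sigma^{(1)}}/\underline{T}_{\mathrm{NS}} \cong \underline{T}$. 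Equivalently, one can phrase this as the vanishing of $\rH^1(\mathbf k, \underline{T}_{\mathrm{NS}})$ for a split torus, by Hilbert 90.

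Summing $(q-1)^n$ over $w \in \underline{U}_{\bold r, D}(\mathbf k)$ gives
\[
\#\underline{M}_{\alpha, D}^\circ(\mathbf k) = (q-1)^n \#\underline{U}_{\bold r, D}(\mathbf k).
\]
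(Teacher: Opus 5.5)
Your proposal is correct and follows the same route as the paper, whose proof is the one-line appeal to Lemma~\ref{lemm:isotriviality}: restricting $\Phi_\alpha$ to the fiber product, every fiber over a $\mathbf k$-point of the base is a copy of the split torus $\underline{T}$, which has $(q-1)^n$ points. Your extra remark that the fiber is not a nontrivial twisted form is a harmless elaboration of what that lemma already asserts; over a finite field it also follows from Lang's theorem.
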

\begin{proof}
    This follows from Lemma~\ref{lemm:isotriviality}.
\end{proof}
Thus it suffices to analyze $\#\underline{U}_{\bold r, D}(\mathbf k)$.
We consider the following indicator function $\delta_{D} : \sqcup_{\bold r}H_{\bold r}(\mathbf k) \to \{0, 1\}$:
\[
\delta_{D}(w):= 
\begin{cases}
    1 & \text{ if } w \in \sqcup_{\bold r}U_{\bold r, D} \\
    0 & \text{ otherwise.}
\end{cases}
\]
As before we have
\[
\#\underline{U}_{\bold r, D}(\bold k) = \sum_{w \in \underline{H}_{\bold r}(\mathbf k)} \delta_{D}(w).
\]
In view of Proposition~\ref{prop:numericalclasswithA1}, it is natural to count $\mathbb A^1$-curves associated to an element of the Clemens complex. This is very much in the spirit of \cite[Section 6]{Santens}.

In the rest of this section, we fix an element $(A, Z)$ of the Clemens complex. 
Note that we have $Z = \underline{\Delta}_A$ where as before $\underline{\Delta}_{A} = \cap_{i \in A} \underline{\Delta}_{i}$. 
We write $(t_i)_{i \in (\Sigma^{(1)}\setminus \mathcal A) \cup A}$ as $\bold t_A$.
We consider the following virtual height zeta function:
\begin{align*}
\mathsf Z_{A, Z}(\bold t_A) &= \sum_{\text{$\bold r$ satisfies $\diamond_A$}} \left(\prod_{i \in (\Sigma^{(1)}\setminus \mathcal A) \cup A}q^{-t_ir_i}\right)\#\underline{U}_{\bold r, D}(\bold k)\\ &= \sum_{\text{$\bold r$ satisfies $\diamond_A$}}\sum_{w \in \underline{H}_{\bold r}(\mathbf k)} \left(\prod_{i\in (\Sigma^{(1)}\setminus \mathcal A) \cup A}q^{-t_ir_i}\right)  \delta_{D}(w),
\end{align*}
where $\diamond_A$ indicates
\[
r_i = 0 \text{ for any $i \in \mathcal A\setminus A$.}
\]
The above zeta function can be written as an Euler product: 
\[
\mathsf Z_{A, Z}(\bold t_A) = \prod_{c \in |\underline{\mathbb A}^1|}\left(\sum_{\text{$\bold r$ satisfies $\diamond_A$}}\prod_iq_c^{-t_ir_i}\delta_{D}(\bold r)\right) \times \left(\sum_{\text{$\bold r$ satisfies $\diamond_A$}}\prod_iq_\infty^{-t_ir_i}\right).
\]
Let $\underline{T}(\mathbb A_{F})_{D, A}$ be
\[
\underline{T}(\mathbb A_F) \cap \left(\prod_{c \in |\underline{A}^1|} X^\circ(\mathfrak o_c) \times V(\mathfrak o_\infty) \right),
\]
where $V = X \setminus (\cup_{i \in \mathcal A \setminus A}\Delta_i)$. We denote its indicator function by $\delta_{D, A} : \underline{T}(\mathbb A_F) \to \{0, 1\}$.
We consider the height integral
\[
\mathcal I(\delta_{D, A}; \bold t) = \int_{\underline{T}(\mathbb A_F)} \mathsf H(\bold t, (g_c))^{-1}\delta_{D, A}((g_c)) \mathrm d\tau_{\underline{T}},
\]
where we assume that $\bold t$ satisfies $\diamond_A$.
This becomes the Euler product
\[
\mathcal I(\delta_{D, A};\bold t) = L_*(1, \mathrm{EP}(\underline{T}))^{-1}\prod_{c \in |\underline{\mathbb P}^1|} \mathcal I_{D, A, c}(\bold t),
\]
where $\mathcal I_{D, A, c}(\bold t)$ is defined as
\[
\mathcal I_{D, A, c}( \bold t) = 
\begin{cases}
\int_{\underline{X}^\circ(\mathfrak o_c)} \mathsf H_c(\bold t, g_c)^{-1} L_c(1, \mathrm{EP}(\underline{T})) \mathrm d\tau_c & \text{ if $c \in |\underline{\mathbb A}^1|$}\\
\int_{\underline{V}(\mathfrak o_\infty)} \mathsf H_c(\bold t, g_c)^{-1} L_c(1, \mathrm{EP}(\underline{T})) \mathrm d\tau_c & \text{ if $c = \infty$}.
\end{cases}
\]
By Denef's formula (\cite[Proposition 4.5]{CLT10}), when $c \in |\mathbb A^1|$ we have
\[
\mathcal I_{D, A, c} (\bold t_A) = \left(\frac{q_c}{q_c-1}\right)^n\sum_{B \subset (\Sigma^{(1)} \setminus \mathcal A)} q_c^{-n} \#\underline{\Delta}_B^\circ(\mathbf k_c) \prod_{i \in B}(q_c-1)\frac{q_c^{-t_i}}{1-q_c^{-t_i}}.
\]
When $c = \infty$, we have
\[
\mathcal I_{D, A, c} (\bold t_A) = \left(\frac{q_c}{q_c-1}\right)^n\sum_{B \subset (\Sigma^{(1)}\setminus\mathcal A)\cup A} q_c^{-n} \#\underline{\Delta}_B^\circ(\mathbf k_c) \prod_{i \in B}(q_c-1)\frac{q_c^{-t_i}}{1-q_c^{-t_i}}.
\]
Again since $\underline{\Delta}_B^\circ$ is a split torus, these local integrals become
\[
\mathcal I_{D, A, c} ( \bold t_A) = 
\begin{cases}\sum_{B \subset (\Sigma^{(1)}\setminus \mathcal A)}  \delta'(B) \prod_{i \in B}\left(\sum_{r_i = 1}^{\infty}q_c^{-r_it_i}\right) & \text{ if $c \in |\mathbb A^1|$;}\\
\sum_{B \subset (\Sigma^{(1)}\setminus\mathcal A)\cup A}  \delta'(B) \prod_{i \in B}\left(\sum_{r_i = 1}^{\infty}q_c^{-r_it_i}\right) & \text{ if $c = \infty$.}
\end{cases}
\]
where as before
$\delta'(B)$ is $1$ if $\underline{\Delta}_B\neq \emptyset$ and $0$ otherwise.
Then we have
\[
\mathcal I_c(\delta_{D, A c}; \bold t_A) = 
\begin{cases}
\sum_{\text{$\bold r$ satisfies $\diamond_A$}}\prod_iq_c^{-t_ir_i}\delta_{D}(\bold r) & \text{ if $c \in |\mathbb A^1|$;}\\
\sum_{\text{$\bold r$ satisfies $\diamond_A$}}\prod_iq_c^{-t_ir_i} & \text{ if $c = \infty$.}
\end{cases}
\]
Using this we obtain
\begin{prop}
\label{prop:A1heightintegral}
    There exists a positive constant $\epsilon > 0$ that the function
    \[
    \prod_{i \in \Sigma^{(1)}\setminus \mathcal A}(1-q^{-(t_i -1)}) \prod_{i \in A}(1-q^{-t_i})\mathsf Z_{A, Z}(\bold t_A)
    \]
    admits a holomorphic continuation to the domain $\mathsf T_{> -\epsilon}$ defined by 
    $$
    \begin{cases}
    \Re(t_i)\geq 1 - \epsilon & \text{ if $i \in \Sigma^{(1)}\setminus \mathcal A$}\\
    \Re(t_i) \geq -\epsilon & \text{ if $i \in A$}.
    \end{cases}
    $$ and moreover it satisfies
    \begin{align*}
    &\lim_{\star}\prod_{i \in \Sigma^{(1)}\setminus \mathcal A}(1-q^{-(t_i-1)})\prod_{i \in A}(1-q^{-t_i})\mathsf Z_{A, Z}(\bold t_A) \\
&= \left(\frac{q}{q-1}\right)^{n} \int_{\underline{X}^\circ(\mathfrak o^{\infty})}1 \, \mathrm d\tau_{\underline{X}^\circ}^\infty \times \int_{\underline{\Delta}_A^\circ(\mathfrak o_\infty)} \mathsf H_\infty(-K_X, z_\infty)^{-1}\mathrm L_\infty(1, \mathrm{EP}(\underline{X}^\circ))\mathrm d\tau_{\underline{\Delta}_A, \infty}
    \end{align*}
    where $\star$ indicates the following limit:
    \[
    \begin{cases}
        t_i \to 1 & \text{ if $i \in \Sigma^{(1)} \setminus \mathcal A$}\\
        t_i \to 0 & \text{ if $i \in A$},
    \end{cases}
    \]
    and
    \[
    \underline{X}^\circ(\mathfrak o^{\infty}) = \prod_{c \in |\underline{\mathbb A}^1|} \underline{X}^\circ(\mathfrak o_c), \quad \tau_{\underline{X}^\circ}^\infty = L_*(1, \mathrm{EP}(\underline{X}^\circ))^{-1}\prod_{c \in |\underline{\mathbb A}^1|} L_c(1, \mathrm{EP}(\underline{X}^\circ))\tau_c.
    \]
    We denote this leading constant as $\left(\frac{q}{q-1}\right)^{n}\tau((X, D), A)$.
\end{prop}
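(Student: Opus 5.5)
The plan is to copy the proof of Proposition~\ref{prop:trivialcharacter}. The identity established just before the statement matches the virtual height zeta function $\mathsf Z_{A,Z}(\bold t_A)$ with the local height integrals $\mathcal I_{D,A,c}(\bold t_A)$ place by place. For $c \in |\underline{\mathbb A}^1|$ the Euler factor of $\mathsf Z_{A,Z}$ is $\mathcal I_{D,A,c}$, and the factor at $\infty$ is $\mathcal I_{D,A,\infty}$. Hence
\[
\mathsf Z_{A,Z}(\bold t_A) = L_*(1, \mathrm{EP}(\underline{T}))\,\mathcal I(\delta_{D,A}; \bold t_A).
\]
Everything therefore reduces to an analysis of this Euler product.

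\emph{Finite places.} For $c \in |\underline{\mathbb A}^1|$, Denef's formula shows that the only terms of order $q_c^{-1}$ come from singletons $B=\{i\}$ with $i \in \Sigma^{(1)}\setminus\mathcal A$. Indeed, $\sum_{r\ge1}q_c^{-rt_i}=q_c^{-t_i}+O(q_c^{-2\Re t_i})$, and the terms with $|B|\ge 2$ are $O(q_c^{-2+2\epsilon})$ when $\Re t_i\ge 1-\epsilon$. So for $\epsilon$ small there is $\epsilon'>0$ with
\[
\Bigl(\prod_{i\in\Sigma^{(1)}\setminus\mathcal A}(1-q_c^{-t_i})\Bigr)\,\mathcal I_{D,A,c}(\bold t_A) = 1+O(q_c^{-(1+\epsilon')}).
\]
Consequently $\prod_{i\in \Sigma^{(1)}\setminus\mathcal A}\zeta_{\underline{\mathbb A}^1}(t_i)^{-1}$ times the product over finite places converges absolutely and is holomorphic on $\mathsf T_{>-\epsilon}$. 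Here $\zeta_{\underline{\mathbb A}^1}(t)=(1-q^{1-t})^{-1}$, so this accounts for the factor $\prod_{i\in\Sigma^{(1)}\setminus\mathcal A}(1-q^{-(t_i-1)})$.

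\emph{The place $\infty$.} The local factor is a finite sum over cones $B\subset(\Sigma^{(1)}\setminus\mathcal A)\cup A$ of products $\prod_{i\in B}q^{-t_i}/(1-q^{-t_i})$. Its only singularities near $\Re t_i\ge -\epsilon$ are simple poles along $t_i=0$ for $i\in A$; the variables $t_i$ with $i\in\Sigma^{(1)}\setminus\mathcal A$ satisfy $\Re t_i\ge 1-\epsilon$ and cause no poles. Multiplying by $\prod_{i\in A}(1-q^{-t_i})$ therefore gives a holomorphic function, which proves the continuation statement. Taking the limit $t_i\to 0$ for $i\in A$ kills every term except those with $B\supseteq A$. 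Since $A$ spans a cone, these $B$ are exactly the cones containing $A$, and they index the toric strata of $\underline{\Delta}_A$ cut out by the remaining divisors. Rewriting this sum by Denef's formula on the toric variety $\underline{\Delta}_A$, with residue measure $\tau_{\underline{\Delta}_A,\infty}$ and $t_i=1$ for $i \in \Sigma^{(1)}\setminus\mathcal A$, gives $\int_{\underline{\Delta}_A^\circ(\mathfrak o_\infty)}\mathsf H_\infty(-K_X,z_\infty)^{-1}L_\infty(1,\mathrm{EP}(\underline{X}^\circ))\,\mathrm d\tau_{\underline{\Delta}_A,\infty}$, up to explicit factors $(q/(q-1))^{\ast}$ and $q^{-\ast}$.

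\emph{Assembling the constant.} At the finite places, letting $t_i\to1$ gives $\prod_c L_c(1,\mathrm{EP}(\underline T))\int_{\underline{X}^\circ(\mathfrak o_c)}\mathrm d\tau_c$. We then replace $L_c(1,\mathrm{EP}(\underline T))$ by $L_c(1,\mathrm{EP}(\underline{X}^\circ))$ and the global $L_*(1,\mathrm{EP}(\underline T))$ by $L_*(1,\mathrm{EP}(\underline{X}^\circ))$. This is legitimate because, by Proposition~\ref{prop:equivconditionstoric}, $\mathrm{EP}(\underline{X}^\circ)$ differs from $\mathrm{EP}(\underline T)$ by the trivial module $\mathbb Q^{\Sigma^{(1)}\setminus\mathcal A}$. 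The resulting zeta factors $(1-q_c^{-1})^{-\#}$ are exactly those removed by the normalisation $\prod(1-q^{-(t_i-1)})$, and the residue of $\zeta_{\underline{\mathbb A}^1}$ together with that of $\zeta_{\underline{\mathbb P}^1}$ produces the $(q/(q-1))^n$. The main obstacle is this bookkeeping of normalising constants: matching the rank $b$ in $L_*$, the factor $(q/(q-1))^n$, and the split between $\mathbb A^1$ and $\infty$ so that the limit is precisely $(q/(q-1))^n\tau^\infty_{\underline{X}^\circ}(\underline{X}^\circ(\mathfrak o^\infty))$ times the boundary integral. I would settle it by computing the constants directly in the case $X=\mathbb P^1$, $D=\{0\}$ and then checking compatibility with the general Denef expansion.
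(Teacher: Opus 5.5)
\textbf{There is a genuine gap: the limit (second half) is never established.} Your holomorphic-continuation argument is correct. It is exactly the Euler-product comparison that the paper imports from \cite[Lemma 4.1]{CLT10}. Your observation that only the cones $B\supseteq A$ survive at $\infty$ is also correct; it is the residue mechanism behind \cite[Proposition 4.3]{CLT10}, which the paper cites for the limit.

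The problem is that the second half is an exact identity of constants, and you stop at ``up to explicit factors''. Your account of where $(q/(q-1))^{n}$ comes from is also wrong. The product $(1-q^{1-t})\prod_{c\in|\underline{\mathbb A}^1|}(1-q_c^{-t})^{-1}$ is identically $1$ and contributes nothing, and $\zeta_{\underline{\mathbb P}^1}$ enters only through $L_*(1,\mathrm{EP}(\underline X^\circ))$.

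Here is what the bookkeeping gives. Put $s=\#(\Sigma^{(1)}\setminus\mathcal A)$; then $\mathrm{EP}(\underline X^\circ)=-\mathbb Q^{s-n}$. The finite places give
\[
\prod_{c\in|\underline{\mathbb A}^1|}(1-q_c^{-1})^{s}\,\frac{\#\underline X^\circ(\mathbf k_c)}{(q_c-1)^{n}}=\Bigl(\frac{q}{q-1}\Bigr)^{n-s}\tau^\infty_{\underline X^\circ}(\underline X^\circ(\mathfrak o^\infty)).
\]
The factor at $\infty$, multiplied by $\prod_{i\in A}(1-q^{-t_i})$, tends to $(q-1)^{|A|-n}\#(\underline\Delta_A\cap V)(\mathbf k)$, with $V$ as in the definition of $\delta_{D,A}$. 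Suppose $\tau_{\underline\Delta_A,\infty}$ is the residue measure normalized so that Denef's formula holds on $\underline\Delta_A$. Then this factor equals $(q/(q-1))^{s-|A|}L_\infty(1,\mathrm{EP}(\underline X^\circ))\,\tau_{\underline\Delta_A,\infty}((\underline\Delta_A\cap V)(\mathfrak o_\infty))$. The limit therefore comes out as $(q/(q-1))^{n-|A|}$ times the displayed product, not $(q/(q-1))^{n}$ times it. Relative to Denef's formula on $\underline\Delta_A$, each $i\in A$ contributes a factor $(1-q^{-t_i})\frac{q-1}{q^{t_i}-1}\cdot q^{-1}\to 1-q^{-1}$.

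Your own proposed test case exhibits the discrepancy. Take $\underline X=\mathbb P^1$, $D=\{0\}$ and $A=\{0\}$, with index $1$ for $\{0\}$ and index $2$ for $\{\infty\}$. One computes
\[
\mathsf Z_{A,Z}=\frac{1}{1-q^{1-t_2}}\Bigl(\frac{1}{1-q^{-t_2}}+\frac{q^{-t_1}}{1-q^{-t_1}}\Bigr),
\]
so $(1-q^{1-t_2})(1-q^{-t_1})\mathsf Z_{A,Z}=\frac{1-q^{-t_1}}{1-q^{-t_2}}+q^{-t_1}\to 1$. On the other side, $\mathrm{EP}(\underline X^\circ)=0$ and $\tau^\infty_{\underline X^\circ}(\underline X^\circ(\mathfrak o^\infty))=1$, so the right-hand side is $\frac{q}{q-1}$ times the mass of the point $\underline\Delta_A$. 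That mass is $1$ under the Denef/residue normalization.

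Hence the stated identity holds only if $\tau_{\underline\Delta_A,\infty}$ is normalized to absorb the factor $(1-q^{-1})^{|A|}$. This is the local residue constant of \cite{CLT10}, adapted to normalizing by $1-q^{-t_i}$ rather than by $t_i$. A complete proof must fix that normalization explicitly and carry out the general computation above; checking a single example cannot replace it.
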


\begin{proof}
    The first statement follows from the proof of \cite[Lemma 4.1]{CLT10}.
    The second statment follows from the proofs of \cite[Proposition 4.10]{CLT10} and \cite[Proposition 4.3]{CLT10}.
\end{proof}

Thanks to this result we have the following counting estimates:
\begin{prop}
\label{prop:countingA1}
    There exists $\eta > 0$ such that assuming $\bold r$ satisfies $\diamond_A$, we have
    \[
    \frac{\#\underline{U}_{\bold r, D}(\bold k)}{q^{\sum_{i \in \Sigma^{(1)} \setminus \mathcal A} r_i}} = \frac{q^n}{(q-1)^n}\tau((X, D), A) + O(q^{-\eta\min_{i \in (\Sigma^{(1)} \setminus \mathcal A) \cup A}\{r_i\}}).
    \]
\end{prop}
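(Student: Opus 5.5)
The plan mirrors the proof of Proposition~\ref{prop:counting}, with Proposition~\ref{prop:A1heightintegral} in place of Proposition~\ref{prop:nontrivialcharacter}. No twisting by characters is needed here, because the relevant poles are at $t_i=1$ for $i\in\Sigma^{(1)}\setminus\mathcal A$ and at $t_i=0$ for $i\in A$. These come from simple zeta factors whose coefficients are not supported on a residue class, unlike the factors $\zeta_{\underline{\mathbb P}^1}(m_it_i)$ in the Campana case.

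First, I expand the regularized series
\[
\prod_{i \in \Sigma^{(1)}\setminus \mathcal A}(1-q^{-(t_i -1)}) \prod_{i \in A}(1-q^{-t_i})\,\mathsf Z_{A, Z}(\bold t_A) = \sum_{\bold j} B_{\bold j}\prod_{i\in(\Sigma^{(1)}\setminus\mathcal A)\cup A} q^{-j_it_i},
\]
where $\bold j$ runs over tuples indexed by $(\Sigma^{(1)}\setminus\mathcal A)\cup A$. By Proposition~\ref{prop:A1heightintegral}, this Dirichlet series continues holomorphically to $\mathsf T_{>-\epsilon}$. I will check that the continuation is given by an absolutely convergent power series in the variables $q^{-t_i}$ on a slightly smaller closed polydisc. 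For the Euler product over $|\underline{\mathbb A}^1|$, this comes from local factors of the form $1+O(q_c^{-(1+\epsilon')})$. For the single factor at $\infty$, it is a finite sum of geometric series in $q^{-t_i}$ for $i \in A$, and the factor $\prod_{i\in A}(1-q^{-t_i})$ cancels their poles. Absolute convergence at $\Re(t_i)=1-\epsilon$, respectively $\Re(t_i)=-\epsilon$, gives the coefficient bound
\[
B_{\bold j} = O\Bigl(q^{\sum_{i\in\Sigma^{(1)}\setminus\mathcal A}(1-\eta)j_i+\sum_{i\in A}(-\eta) j_i}\Bigr)
\]
for any $0<\eta<\epsilon$.

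Next, I recover $\#\underline{U}_{\bold r,D}(\mathbf k)$ by multiplying back by $\prod(1-q^{-(t_i-1)})^{-1}\prod_{i\in A}(1-q^{-t_i})^{-1}$ and comparing coefficients:
\[
\frac{\#\underline{U}_{\bold r, D}(\mathbf k)}{q^{\sum_{i\in\Sigma^{(1)}\setminus\mathcal A} r_i}}=\sum_{j_i\le r_i} B_{\bold j}\prod_{i\in\Sigma^{(1)}\setminus\mathcal A}q^{-j_i}.
\]
Here the $A$-variables carry weight $q^{0}$, since the pole is at $t_i=0$. The full sum $\sum_{\bold j}B_{\bold j}\prod q^{-j_i}$ equals the value of the regularized function at the point $\star$, which by Proposition~\ref{prop:A1heightintegral} is $\left(\frac{q}{q-1}\right)^n\tau((X,D),A)$. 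The error is the tail where $j_i>r_i$ for some $i$. By the coefficient bound, each term of that tail is $O(q^{-\eta j_i})$, and summing the geometric tail gives $O(q^{-\eta r_i})$. Summing over $i$ yields $O(q^{-\eta\min_i r_i})$, possibly after shrinking $\eta$.

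The main obstacle is the first step: justifying that holomorphy on $\mathsf T_{>-\epsilon}$ really yields absolute convergence of the coefficient series on its boundary with uniform exponential decay. This is subtle in the $A$-variables, where the domain is centered at $0$ rather than $1$. I would argue this directly from the Euler product, noting that the factor at $\infty$ becomes a polynomial in the $q^{-t_i}$ after regularization. This mirrors the step carried out in \cite[Proposition 8.8]{DLTT25}.
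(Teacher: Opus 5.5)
Your proposal is correct and follows the paper's proof, which simply says to argue as in Proposition~\ref{prop:counting}: expand the regularized series with coefficients $B_{\mathbf j}$, bound them using absolute convergence on $\mathsf T_{>-\epsilon}$ from Proposition~\ref{prop:A1heightintegral}, divide back by the regularizing factors and compare coefficients, then estimate the tail. Your observation that the $A$-variables occur only in the factor at $\infty$, which becomes polynomial in $q^{-t_i}$ ($i\in A$) after multiplying by $\prod_{i\in A}(1-q^{-t_i})$, usefully makes explicit the absolute-convergence step that the paper leaves implicit.
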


\begin{proof}
    One may argue as in Proposition~\ref{prop:counting}.
\end{proof}

\subsection{The main result}

In the view of Proposition~\ref{prop:numericalclasswithA1}, we consider the counting function associated to a triple $(X, D, A)$. To this end, we introduce the following definitions:
\begin{defi}
    Let $N_1((X, D), A) \subset N_1(\underline{X})$ be the vector space spanned by the face $\mathcal F_{A, Z}$. We denote $N_1((X, D), A)\cap N_1(\underline{X})_{\mathbb Z}$ by $N_1((X, D), A)_{\mathbb Z}$.
    We define the index $\mathsf r(X, D, A)$ by
    \[
    \mathsf r(X, D, A) = \min\{ -K_X. \alpha > 0 \, | \, \alpha \in N_1((X, D), A))_{\mathbb Z} \}.
    \]
\end{defi}

Now we define the counting function we are interested in:
for any positive integer $d$, we define
\[
N(((X, D), A), \mathsf r(X, D, A)d)  = \sum_{\alpha \in \mathcal F_{A, Z, \mathbb Z}, -K_X.\alpha \leq \mathsf r(X, D, A)d} \#M_\alpha^\circ(\bold k).
\]
This is well-defined due to the Northcott property established in Proposition~\ref{prop:northcott}.
This counting function is analogous to the set up considered in \cite[Section 6]{Santens}.
Now we state our main theorem:
\begin{theo}
\label{theo:A1curve}
 Assume that $A$ has no analytic obstruction.
    Then we have
    \[
    N(((X, D), A), \mathsf r(X, D, A)d) \sim \mathsf c(X, D, A)q^{\mathsf r(X, D, A)d}(\mathsf r(X, D, A)d)^{b-1},
    \]
    as $d \to \infty$ where $\mathsf c(X, D, A)$ is given by
    \[
     \mathsf c(X, D, A) = \frac{\alpha(\mathcal F_{A, Z})q^n}{(1-q^{-\mathsf r(X, D, A)})}\tau((X, D), A),
    \]
    and $b = \dim N_1((X, D), A) = \rk \, \Pic(\underline{X}^\circ) + \dim A$.

\end{theo}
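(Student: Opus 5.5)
We follow the same route as the proof of Theorem~\ref{theo:Campana}, now with the lattice $N_1((X,D),A)_{\mathbb Z}$ and the cone $\mathcal F_{A,Z}$ in place of $N_1(\underline{X})_{\mathbb Z}$ and $\mathrm{Nef}_1(\underline{X})$. The situation is simpler because no characters are needed: the $\mathbb A^1$-conditions impose no congruences on $\bold r$.

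\emph{Step 1: reduction to the Hilbert-scheme counts.} By Proposition~\ref{prop:numericalclasswithA1} and Lemma~\ref{lemm:countingproductII}, a class $\alpha \in \mathcal F_{A,Z,\mathbb Z}$ with $\bold r = (\Delta_i.\alpha)$ satisfies $\diamond_A$ and contributes $(q-1)^n\#\underline{U}_{\bold r,D}(\mathbf k)$. Here $\#M^\circ_\alpha$ in the counting function should be read as $\#M^\circ_{\alpha,D}$.

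Since $r_i = 0$ for $i \in \mathcal A\setminus A$, we have
\[
-K_X.\alpha = \sum_{i\in \Sigma^{(1)}\setminus\mathcal A} r_i .
\]
This is exactly the exponent appearing in Proposition~\ref{prop:countingA1}, which gives
\[
(q-1)^n\#\underline{U}_{\bold r,D}(\mathbf k) = q^{n}\tau((X,D),A)\, q^{-K_X.\alpha}\bigl(1+O(q^{-\eta\min_i r_i})\bigr),
\]
where the minimum runs over $i\in(\Sigma^{(1)}\setminus\mathcal A)\cup A$.

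\emph{Step 2: the shrunken cone.} Define $(\mathcal F_{A,Z})_\epsilon$ as the set of $\alpha \in \mathcal F_{A,Z}$ with $\Delta_i.\alpha \ge \epsilon(-K_X.\alpha)$ for all $i \in (\Sigma^{(1)}\setminus\mathcal A)\cup A$. On this cone the error term is $O(q^{(1-\epsilon\eta)(-K_X.\alpha)})$.

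Summing the main term over lattice points of $(\mathcal F_{A,Z})_\epsilon\cap\{-K_X.\alpha = \mathsf r(X,D,A)d'\}$ works as follows.
\begin{itemize}
\item Proposition~\ref{prop:northcott} makes these slices compact.
\item The Corollary following Proposition~\ref{prop:obstruction} identifies $\dim N_1((X,D),A) = b$.
\item Standard lattice point counting then shows the number of points in each slice is $\alpha((\mathcal F_{A,Z})_\epsilon)(\mathsf r d')^{b-1} + O(d'^{b-2})$, with the normalisation of $\alpha(\cdot)$ relative to the lattice $N_1((X,D),A)_{\mathbb Z}$.
\item The values of $-K_X$ on this lattice are exactly the multiples of $\mathsf r(X,D,A)$.
\end{itemize}
Summing the geometric series over $d'\le d$ produces the factor $(1-q^{-\mathsf r(X,D,A)})^{-1}$. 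This gives the asymptotic formula for $N((\mathcal F_{A,Z})_\epsilon,\mathsf r d)$ with constant $\alpha((\mathcal F_{A,Z})_\epsilon)q^n\tau((X,D),A)/(1-q^{-\mathsf r})$.

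\emph{Step 3: the boundary region and the limit $\epsilon \to 0$.} Proposition~\ref{prop:countingA1} also yields a uniform bound $\#\underline{U}_{\bold r,D}(\mathbf k)\le \mathsf C q^{-K_X.\alpha}$, exactly as in Corollary~\ref{coro:uniformbound}. Applied to the closure of $\mathcal F_{A,Z}\setminus(\mathcal F_{A,Z})_\epsilon$, this bounds its contribution by $\mathsf C'\alpha(\mathsf C_\epsilon)$ times the main order. Sandwiching the full count as in the proof of Theorem~\ref{theo:Campana} and letting $\epsilon\to0$ gives the theorem.

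\emph{Main obstacle.} The delicate point is the dimension and lattice bookkeeping in Step 2. We must check three things:
\begin{itemize}
\item the shrunken cone has full dimension $b$ inside $N_1((X,D),A)$;
\item $\alpha(\mathsf C_\epsilon)\to 0$ as $\epsilon\to0$;
\item the classes we count are exactly the lattice points of $\mathcal F_{A,Z}$.
\end{itemize}
The absence of an analytic obstruction enters precisely here. Through Proposition~\ref{prop:obstruction}(2) and the Corollary after it, it guarantees that $\mathcal F_{A,Z}$ has dimension $b$ and that its relative interior contains classes with $r_i>0$ for all $i\in(\Sigma^{(1)}\setminus\mathcal A)\cup A$. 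Without this, the shrunken cones would be lower dimensional and the count would be dominated by a smaller face.
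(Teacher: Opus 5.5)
Your proposal follows the paper's proof, in more detail: reduce to $\#\underline{U}_{\mathbf r,D}(\mathbf k)$ via Lemma~\ref{lemm:countingproductII} and Proposition~\ref{prop:countingA1}, count lattice points on the shrunken face $\mathcal F_{A,Z,\epsilon}$, bound the contribution of $\mathsf C_\epsilon$ by the uniform estimate, and let $\epsilon\to 0$; your remark that, absent an analytic obstruction, no $\Delta_i$ with $i\in(\Sigma^{(1)}\setminus\mathcal A)\cup A$ vanishes identically on $\mathcal F_{A,Z}$ (so that $\alpha(\mathsf C_\epsilon)\to 0$) is exactly the nondegeneracy the paper uses tacitly. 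One caveat, shared with the paper, is normalization: your slice count $\alpha(\cdot)(\mathsf r d')^{b-1}$ is right if $\alpha$ is the volume of the slice $\{\beta : -K_X.\beta=1\}$ measured against the lattice $\ker(-K_X)\cap N_1((X,D),A)_{\mathbb Z}$, whereas with $\alpha = b\cdot\mathrm{vol}\{\beta : -K_X.\beta\le 1\}$ (covolume-one normalization) each slice has about $\mathsf r\,\alpha\,(\mathsf r d')^{b-1}$ lattice points, and the constant picks up an extra factor $\mathsf r(X,D,A)$.
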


\begin{proof}
This follows from Proposition~\ref{prop:countingA1} using the counting arguments of Theorem~\ref{theo:Campana} involving shrunken cones. We briefly sketch the proof.

As before, we consider the shrunken cone:
\[
\mathcal{F}_{A,Z, \epsilon} = \{\alpha \in \mathcal{F}_{A,Z} \, | \, \Delta_i.\alpha \geq -\epsilon K_X. \alpha \text{ for any $i \in (\Sigma^{(1)}\setminus \mathcal A)\cup A$}\},
\]
and we denote the closure of its complement:
\[
\mathsf C_\epsilon = \overline{\mathcal{F}_{A,Z} \setminus \mathcal{F}_{A,Z, \epsilon}}.
\]
Using Proposition~\ref{prop:countingA1}, since there is no analytic obstruction, we can prove that
\[
N(\mathcal{F}_{A,Z, \epsilon}, \mathsf r(X, D, A)d) \sim \mathsf c(\mathcal{F}_{A,Z, \epsilon})q^{\mathsf r(X, D, A)d}(\mathsf r(X, D, A)d)^{b-1},
\]
as $d \to \infty$ where the constant $\mathsf c(\mathcal{F}_{A,Z, \epsilon})$ is given by
\[
 \mathsf c(\mathcal{F}_{A,Z, \epsilon}) = \frac{\alpha(\mathcal{F}_{A,Z, \epsilon})q^n}{(1-q^{-\mathsf r(X, D, A)})}\tau((X, D), A).
\]
Now we have inequalities:
 \begin{align*}
    &\frac{N(\mathcal{F}_{A,Z, \epsilon}, \mathsf r((X, D), A)d)}{q^{\mathsf r((X, D), A))d}(\mathsf r((X, D), A)d)^{b-1}}\leq \frac{N(((X, D), D), \mathsf r((X, D), A)d)}{q^{\mathsf r((X, D), A)d}(\mathsf r((X, D), A)d)^{b-1}}\\
    &\leq
    \frac{N(\mathcal{F}_{A,Z, \epsilon}, \mathsf r((X, D), A)d)}{q^{\mathsf r((X, D), A)d}(\mathsf r((X, D), A)d)^{b-1}} + \frac{N(\mathsf C_\epsilon, \mathsf r((X, D), A)d)}{q^{\mathsf r((X, D), A)d}(\mathsf r((X, D), A)d)^{b-1}}.
    \end{align*}
    Using Proposition~\ref{prop:countingA1}, we can also prove that
    \[
    \limsup_{d \to \infty} \frac{N(\mathsf C_\epsilon, \mathsf r((X, D), A)d)}{q^{\mathsf r((X, D), A)d}(\mathsf r((X, D), A)d)^{b-1}} \leq \frac{(q-1)^n\mathsf C}{(1-q^{-\mathsf r((X, D), A)})}\alpha(\mathsf C_\epsilon).
    \]
    Thus we obtain
    \begin{align*}
    &\mathsf c(\mathcal{F}_{A,Z, \epsilon})\leq \liminf_{d \to \infty}\frac{N(((X, D), A), \mathsf r((X, D), A)d)}{q^{\mathsf r((X, D), A)d}(\mathsf r((X, D), A)d)^{b-1}}\\
    &\leq \limsup_{d \to \infty}\frac{N(((X, D), A), \mathsf r((X, D), A)d)}{q^{\mathsf r((X, D), A)d}(\mathsf r((X, D), A)d)^{b-1}}\leq\mathsf c(\mathcal{F}_{A,Z, \epsilon}) + \frac{(q-1)^n\mathsf C}{(1-q^{-\mathsf r((X, D), A)})}\alpha(\mathsf C_\epsilon).  
    \end{align*}
    As $\epsilon \to 0$, our assertion follows.
\end{proof}

\begin{rema}
    Our results suggest that the strong approximation holds for $((X, D); A)$ off the infinity in the sense of \cite{Santens} when $A$ has no analytic obstruction.
\end{rema}

\begin{rema}
Our counting method does not apply when $A$ has an analytic obstruction. Indeed, in such a situation, \cite[Theorem 3.12]{Santens} shows that the set of $\mathbb A^1$-curves cannot be Zariski dense. There is always a face without analytic obstruction, however, it is not clear to us that the main term of the counting function of all $\mathbb A^1$-curves is formed by faces without analytic obstruction. In the view of \cite[Theorem 3.12]{Santens}, we think that we should include those $\mathbb A^1$-curves associated to faces with analytic obstructions to the exceptional set.
\end{rema}

\section{Appendix: Toric varieties and their Brauer groups over global function fields}
\label{sec:appendix}

Our goal of this section is to prove Lemma~\ref{lemm:Loughran}. We closely follow the exposition of \cite[Section 4]{Loughran} and use the notation established there.
 Let $\mathbf k = \mathbb F_q$ be a finite field and $\underline{C}$ be a smooth geometrically integral projective curve defined over $\mathbf k$. Let $F = K(\underline{C})$ be a global function field and $\underline{T}$ be an algebraic torus defined over $F$. We denote the absolute Galois group by $G_F = \mathrm{Gal}(F^s/F)$, and we denote the following Galois module
\[
\mathrm{Hom}(\underline{T}_{F^s}, \underline{\mathbb G}_m)
\]
by $X^*(\underline{T}_{F^s})$. One should note that
\[
X^*(\underline{T}) := X^*(\underline{T}_{F^s})^{G_{F}}
\]
is the group of characters of $\underline{T}$.
We denote the groups of cocharacters by $X_*(\underline{T}_{F^s})$ and $X_*(\underline{T})$. We have the dual relation:
\[
X_*(\underline{T}_{F^s}) = \mathrm{Hom}(X^*(\underline{T}_{F^s}), \mathbb Z).
\]
The splitting field of $\underline{T}$ is the fixed field of the kernel of $$G_F \to \mathrm{GL}(X^*(\underline{T}_{F^s})).$$ Over this field, the base change of $\overline{T}$ is isomorphic to $\underline{\mathbb G}_m^n$.

For $c \in |\underline{C}|$, let the group $\underline{T}(\mathfrak o_c)$ be the maximal compact subgroup of $\underline{T}(F_c)$ and define the pairing:
\[
\underline{T}(F_c) \times X^*(\underline{T}_{F_c}) \to \mathbb Z, (g_c, \chi) \mapsto \frac{\log |\chi(g_c)|}{\log q_c},
\]
which induces the exact sequence:
\[
0 \to\underline{T}(\mathfrak o_c) \to \underline{T}(F_c) \to X_*(T_{F_c}).
\]
The third homomorphism is surjective if $c$ is unramified in the splitting field of $\underline{T}$.
This local pairing gives rise to the adelic pairing:
\[
\underline{T}(\mathbb A_F) \times X^*(T) \to \mathbb Z, ((g_c), \chi) \mapsto \sum_{c \in |\underline{C}|}\frac{\log |\chi(g_c)|}{\log q_c}.
\]
We denote the left kernel of this pairing by $\underline{T}(\mathbb A_F)^1$ and we obtain the exact sequence
\[
0 \to \underline{T}(\mathbb A_F)^1 \to \underline{T}(\mathbb A_F) \to X_*(\underline{T}) \to 0.
\]
This exact sequence admits a section so that we have an isomorphism
\[
\underline{T}(\mathbb A_F) \cong \underline{T}(\mathbb A_F)^1 \times X_*(\underline{T}).
\]

Let 
\[
\Sha(\underline{T}) = \ker \left(\rH^1(F, \underline{T}_{F^s}) \to \prod_{c \in |C|} \rH^1(F_c, \underline{T}_{F^s_c})\right)
\]
be the Tate--Shafarevich group of $\underline{T}$. This is finite over global fields by \cite{conrad}.
We also let
\[
\mathfrak B(\underline{T}) = \ker\left(\Br_1(\underline{T}) \to \prod_{c \in |C|} \Br_1(\underline{T}_{F_c})\right).
\]
Using \cite[Lemma 6.2 and Lemma 6.8]{Sansuc}, we obtain the canonial pairing:
\[
\Sha(\underline{T}) \times \mathfrak B(\underline{T}) \to \mathbb Q/\mathbb Z,
\]
which induces the canonical homomorphism
\[
\Sha(\underline{T})  \to \mathfrak B(\underline{T})^{\sim},
\]
where $\mathfrak B(\underline{T})^{\sim} = \mathrm{Hom}(\mathfrak B(\underline{T}), \mathbb Q/\mathbb Z)$.
Let $\ell$ be a prime not equal to the characteristic of $\mathbf k$.
The proof of \cite[Proposition 8.3]{Sansuc} shows that the above homomorphism induces an isomorphism
\[
\Sha(\underline{T})\{\ell\}  \cong \mathfrak B(\underline{T})^{\sim}\{\ell\},
\]
where $M\{\ell\}$ means the $\ell$-primary part for an abelian group $M$.
Note that \cite[Proposition 8.3]{Sansuc} is stated over number fields, but the proof works over function fields for the $\ell$-primary part without any modification. A key to this is Tate--Nakayama duality theory which is also valid over global function fields as long as we are concerned about the $\ell$-primary part. See \cite[Chapter II Proposition 4.14]{Milne}.

Let 
\[
\Br_e(\underline{T}) = \{ b \in \Br_1(\underline{T})\, |\, b(1) = 0 \}.
\]
Regarding this group, we have the following lemmas:
\begin{lemm}[{\cite[Lemma 6.9(ii)]{Sansuc}}]
    There are natural isomorphisms
    \[
    \Pic(\underline{T}) \cong \rH^1(F, X^*(\underline{T}_{F^s})), \quad \Br_e(\underline{T}) \cong \rH^2(F, X^*(\underline{T}_{F^s})).
    \]
\end{lemm}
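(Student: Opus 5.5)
The plan is to derive both isomorphisms from the Hochschild--Serre spectral sequence for $\underline{T}_{F^s}\to\underline{T}$ with coefficients in $\mathbb G_m$,
\[
E_2^{p,q}=\rH^p(F,\rH^q(\underline{T}_{F^s},\mathbb G_m))\Rightarrow \rH^{p+q}(\underline{T},\mathbb G_m),
\]
using the standard description of units and the Picard group of a split torus. First I will record that $F^s[\underline{T}]^\times = (F^s)^\times \times X^*(\underline{T}_{F^s})$, since every invertible function on a split torus is a constant times a character. This splitting is Galois-equivariant, because evaluation at the identity $1\in\underline{T}(F)$ gives an equivariant retraction onto $(F^s)^\times$. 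Next, $\Pic(\underline{T}_{F^s})=0$, because $\underline{T}_{F^s}$ is an open subset of affine space and so has trivial Picard group.

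Feeding these into the low-degree exact sequence gives
\[
0\to \Pic(\underline{T})\to \rH^1(F,F^s[\underline{T}]^\times)\to 0 ,
\]
since $\Pic(\underline{T}_{F^s})^{G_F}=0$ and $\rH^1(F,(F^s)^\times)=0$ by Hilbert 90. By the splitting, $\rH^1(F,F^s[\underline{T}]^\times)=\rH^1(F,X^*(\underline{T}_{F^s}))$, which is the first isomorphism.

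For the Brauer group, $\Br_1(\underline{T})$ is by definition the kernel of $\Br(\underline{T})\to\Br(\underline{T}_{F^s})$. Because $E_2^{p,1}=0$, the spectral sequence identifies $\Br_1(\underline{T})$ with the image of $\rH^2(F,F^s[\underline{T}]^\times)$, modulo the differential $d_2\colon \rH^0(F,\Pic(\underline{T}_{F^s}))\to \rH^2(F,F^s[\underline{T}]^\times)$, which vanishes. The next term to control is the kernel of $\rH^2(F,F^s[\underline{T}]^\times)\to\Br_1(\underline{T})$, which comes from $E_2^{1,1}=\rH^1(F,\Pic(\underline{T}_{F^s}))=0$, so that map is injective. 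Its cokernel sits inside $\ker(E_2^{3,0}\to\rH^3(\underline{T},\mathbb G_m))$, and the rational point $1$ gives a section, so this kernel is zero. Hence
\[
\Br_1(\underline{T})\cong \rH^2(F,(F^s)^\times)\oplus \rH^2(F,X^*(\underline{T}_{F^s}))=\Br(F)\oplus \rH^2(F,X^*(\underline{T}_{F^s})).
\]
Here the summand $\Br(F)$ is exactly the set of constant classes, and pulling back along $1$ is the projection to it. The kernel of that projection is $\Br_e(\underline{T})$, which gives the second isomorphism. Naturality holds because every map used is functorial in $\underline{T}$.

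I expect the main obstacle to be characteristic $p$. Hilbert 90 and the Hochschild--Serre spectral sequence for $\mathbb G_m$ in the étale topology hold without restriction, so no $\ell$-primary restriction is needed here, unlike in the $\Sha$ duality above. I would nevertheless check that Sansuc's proof in \cite[Lemma 6.9]{Sansuc} uses only these ingredients, and not Tate--Nakayama duality. I would also check that $\rH^3(F,(F^s)^\times)\to \rH^3(\underline{T},\mathbb G_m)$ is injective via the section at $1$, which holds in any characteristic.
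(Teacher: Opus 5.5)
Your argument is correct and is the standard Hochschild--Serre proof behind \cite[Lemma 6.9(ii)]{Sansuc}, which the paper cites without reproving: $\Pic(\underline{T}_{F^s})=0$ kills the row $E_2^{p,1}$, and the identity section splits $F^s[\underline{T}]^\times=(F^s)^\times\oplus X^*(\underline{T}_{F^s})$ Galois-equivariantly, which gives both isomorphisms (the first after Hilbert 90). One bookkeeping correction in your Brauer paragraph: the kernel of $E_2^{2,0}\to \Br_1(\underline{T})$ is the image of $d_2$ out of $E_2^{0,1}$, and the cokernel embeds in $E_2^{1,1}$, so both vanish immediately. The detour through $\ker(E_2^{3,0}\to \rH^3(\underline{T},\mathbb G_m))$ misattributes these roles and is unnecessary; moreover, the section at $1$ would only control the summand $\rH^3(F,(F^s)^\times)$ of $E_2^{3,0}$, not $\rH^3(F,X^*(\underline{T}_{F^s}))$.
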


\begin{lemm}[{\cite[Lemma 6.9]{Sansuc}}]
    The pairing 
    \[
    \Br_e(\underline{T}) \times \underline{T}(F) \to \Br(F), \, (b, g) \mapsto b(g)
    \]
    is bilinear.
\end{lemm}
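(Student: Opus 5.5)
The plan is to show that evaluation is linear in $b$ for fixed $g$, and linear in $g$ for fixed $b$. Linearity in $b$ is formal: for fixed $g \in \underline{T}(F)$, the map $b \mapsto b(g)$ is the pullback $g^* : \Br(\underline{T}) \to \Br(\Spec F) = \Br(F)$ along $g : \Spec F \to \underline{T}$. This is a group homomorphism, and its restriction to $\Br_e(\underline{T})$ is still a homomorphism.

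For linearity in $g$, I will use the standard ``theorem of the square'' argument on the group law. Let $m, p_1, p_2 : \underline{T} \times \underline{T} \to \underline{T}$ be the multiplication and the two projections. For $b \in \Br_e(\underline{T})$ set
\[
\beta = m^*b - p_1^*b - p_2^*b \in \Br_1(\underline{T} \times \underline{T}).
\]
Pulling back along $(1,1)$ gives $\beta(1,1) = b(1) - b(1) - b(1) = 0$, so $\beta \in \Br_e(\underline{T}\times\underline{T})$. Once $\beta = 0$ is shown, pulling back along $(g,h) : \Spec F \to \underline{T}\times\underline{T}$ gives $b(gh) = b(g) + b(h)$, which is the claim.

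To prove $\beta = 0$, I will use the isomorphism $\Br_e(\underline{T}) \cong \rH^2(F, X^*(\underline{T}_{F^s}))$ of the preceding lemma, applied both to $\underline{T}$ and to the torus $\underline{T}\times\underline{T}$. Two facts about it are needed.

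\begin{enumerate}
\item \emph{Naturality.} The isomorphism should be natural with respect to homomorphisms of tori; $m$, $p_1$, $p_2$ are such homomorphisms.
\item \emph{The action on characters.} Under $X^*((\underline{T}\times\underline{T})_{F^s}) = X^*(\underline{T}_{F^s}) \oplus X^*(\underline{T}_{F^s})$, the pullback $m^*$ is the diagonal $\chi \mapsto (\chi,\chi)$, while $p_1^*, p_2^*$ are the two inclusions.
\end{enumerate}

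Given these, $m^* = p_1^* + p_2^*$ already at the level of Galois modules, hence on $\rH^2(F,-)$, hence on $\Br_e$. This forces $\beta = 0$.

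The main point to check is naturality. I would recall how the isomorphism arises: $F^s[\underline{T}]^\times/F^{s\times} = X^*(\underline{T}_{F^s})$, and $\Pic(\underline{T}_{F^s}) = 0$ and $\Br(\underline{T}_{F^s})$ does not enter $\Br_1$. The Hochschild--Serre spectral sequence then gives
\[
\Br_1(\underline{T})/\mathrm{Im}\,\Br(F) \cong \rH^2(F, X^*(\underline{T}_{F^s}))
\]
(using $\rH^3(F,\mathbb G_m) = 0$ for global fields, valid also in the function field case). The unit section splits $\Br_1(\underline{T}) = \Br(F) \oplus \Br_e(\underline{T})$. Every ingredient is functorial for morphisms of pointed varieties respecting the identity, because pullback commutes with the spectral sequence and with the splitting. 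If full functoriality of Sansuc's identification is awkward to quote, I would instead verify $\beta = 0$ after restricting to $\Br_1(\underline{T}\times\underline{T})/\Br(F)$ directly via the spectral sequence. There $\beta$ is killed by the computation on units $F^s[\underline{T}\times\underline{T}]^\times/F^{s\times}$ above, and normalization at $(1,1)$ removes the $\Br(F)$ ambiguity.
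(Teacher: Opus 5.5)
Your proof is correct. The paper gives no argument of its own: it cites Sansuc, and then records the alternative description \eqref{equation:pairing} of the pairing as the cup product $\rH^2(F, X^*(\underline{T}_{F^s})) \times \rH^0(F, \underline{T}(F^s)) \to \rH^2(F, F^{s\times})$, induced by evaluating characters on points. From that description, bilinearity is immediate because cup products are bilinear.

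Your route is different. You prove the stronger statement that every $b \in \Br_e(\underline{T})$ is multiplicative, i.e.\ $m^*b = p_1^*b + p_2^*b$ in $\Br(\underline{T}\times\underline{T})$. For this you need only two inputs: the Hochschild--Serre identification is functorial in homomorphisms of tori, and $m^* = p_1^* + p_2^*$ already holds on character modules.

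This needs less than the cup-product route, since you never check that the Hochschild--Serre edge map is compatible with cup products. It also gives additivity in $g$ for points over any field extension at once, including $\underline{T}(F_c)$ and hence the local and adelic pairings used later in the appendix. What it does not give is the explicit cup-product formula, which the paper needs anyway to apply local Tate--Nakayama duality in the local duality theorem.

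One minor point: the vanishing of $\rH^3(F,\mathbb G_m)$ is unnecessary. Since $\Pic(\underline{T}_{F^s}) = 0$, the low-degree Hochschild--Serre sequence already gives $\Br_1(\underline{T}) \cong \rH^2(F, F^s[\underline{T}]^\times)$. The unit section then splits $F^s[\underline{T}]^\times = F^{s\times} \oplus X^*(\underline{T}_{F^s})$ Galois-equivariantly.
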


Note that this pairing admits the following another description:
\begin{equation}
\label{equation:pairing}
\xymatrix{
\Br_e(\underline{T})\ar[d]^*[@]{\cong} & \times & \underline{T}(F) \ar[d]^*[@]{\cong}\ar[r]& \Br(F)\ar@{=}[d]\\
\rH^2(F, X^*(\underline{T}_{F^s})) & \times  & \rH^0(F, \underline{T}(F^s)) \ar[r]& H^2(F, F^{s \times}),
}
\end{equation}
where the bottom pairing is the cup product.

Next we state the following theorem:
\begin{theo}[{\cite[Theorem 4.4]{Loughran}}]
    For any $c \in |\underline{C}|$, the bilinear pairing
    \[
    \Br_e(\underline{T}_{F_c}) \times \underline{T}(F_c) \to \Br(F_c) \cong \mathbb Q/\mathbb Z
    \]
    induces an isomorphism
    \[
    \Br_e(\underline{T}) \cong \underline{T}(F_c)^\sim
    \]
    of abelian groups.
\end{theo}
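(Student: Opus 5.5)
The plan is to transport the pairing into Galois cohomology and then apply local Tate--Nakayama duality for tori over $F_c$. By the local analogue of the comparison diagram \eqref{equation:pairing}, applied over $F_c$ instead of $F$, the pairing $\Br_e(\underline{T}_{F_c}) \times \underline{T}(F_c) \to \Br(F_c)$ is identified with the cup product
\[
\rH^2(F_c, X^*(\underline{T}_{F_c^s})) \times \rH^0(F_c, \underline{T}(F_c^s)) \to \rH^2(F_c, F_c^{s\times}) = \Br(F_c) \cong \mathbb Q/\mathbb Z .
\]
Here the left factor comes from the isomorphism $\Br_e(\underline{T}_{F_c}) \cong \rH^2(F_c, X^*(\underline{T}_{F_c^s}))$ of \cite[Lemma 6.9(ii)]{Sansuc} over $F_c$. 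The evaluation pairing corresponds to the pairing $X^* \otimes \underline{T}(F_c^s) \to F_c^{s\times}$ given by $(\chi, g) \mapsto \chi(g)$. Checking that these two descriptions agree is formal, exactly as in \eqref{equation:pairing}. The statement then becomes: the cup product induces an isomorphism $\rH^2(F_c, X^*) \cong \underline{T}(F_c)^{\sim}$.

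For the duality step I will use the following.
\begin{enumerate}
\item Choose a finite Galois splitting field $L/F_c$ with group $G$, so that $X^*$ is a $G$-lattice.
\item Inflation--restriction gives $\rH^2(F_c, X^*) = \rH^2(G, X^*)$, because $\rH^1(L, \mathbb Z) = 0$ and $\rH^2(L, \mathbb Z) \cong \mathrm{Hom}_{\mathrm{cont}}(G_L, \mathbb Q/\mathbb Z)$ is dual to the abelianization of $G_L$. For the last point I will keep track of the $\rH^2(L,\mathbb Z)$ term via the isomorphism $\rH^2(-,X^*) \cong \rH^1(-,X^*\otimes\mathbb Q/\mathbb Z)$.
\item Local class field theory makes $(G_{F_c}, F_c^{s\times})$ a class formation, including when $F_c = \mathbb F_{q_c}((t))$.
\item The Tate--Nakayama theorem for a class formation says that cup product with the fundamental class $u_{L/F_c}$ gives isomorphisms $\hat{\rH}^{r}(G, X^*) \cong \hat{\rH}^{r+2}(G, X_* \otimes L^\times)$. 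Combined with cup-product duality $\hat{\rH}^r(G, M) \times \hat{\rH}^{-r}(G, M^\vee) \to \mathbb Q/\mathbb Z$ for lattices, this yields a perfect pairing between the finite groups $\rH^2(G, X^*)$ and $\hat{\rH}^0(G, \underline{T}(L)) = \underline{T}(F_c)/N_{L/F_c}\underline{T}(L)$.
\item Pass to the limit over $L$.
\end{enumerate}
All of this is characteristic-free; I will cite \cite[Chapter I]{Milne}, where local duality for tori is proved for arbitrary local fields. This route avoids the $p$-primary subtleties that forced the restriction to $\ell$-primary parts in the global discussion above.

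It remains to identify the target. The limit of the groups $\underline{T}(F_c)/N_{L/F_c}\underline{T}(L)$ is the profinite completion of $\underline{T}(F_c)$ with respect to open subgroups of finite index. Its Pontryagin dual is the group of continuous finite-order characters $\underline{T}(F_c) \to \mathbb Q/\mathbb Z$, which is how I interpret $\underline{T}(F_c)^{\sim}$. This matches the fact that $\rH^2(F_c, X^*)$ is torsion. Injectivity and surjectivity of $\Br_e \to \underline{T}(F_c)^{\sim}$ then follow from perfectness at each finite level $L$, together with the fact that every open finite-index subgroup of $\underline{T}(F_c)$ contains a norm subgroup. For this last fact I would use the existence theorem of local class field theory after reducing to $\mathbb G_m$, via the decomposition $\underline{T}(F_c) \cong \underline{T}(\mathfrak o_c) \times (\text{lattice})$ and an embedding of $\underline{T}$ into a quasi-trivial torus.

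The main obstacle is step 4 together with this norm-subgroup identification in positive characteristic. There $\underline{T}(\mathfrak o_c)$ has a large pro-$p$ part, and the existence theorem must be applied with care. I will first try to deduce the statement from the case of induced tori $R_{L'/F_c}\mathbb G_m$, where it reduces to local class field theory for $L'$ via Shapiro's lemma. I will then handle a general $\underline{T}$ using a flasque or coflasque resolution and the five lemma, checking that both sides are exact in the relevant degrees.
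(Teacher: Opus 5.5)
Your route is the paper's: it identifies the pairing with the cup product through the local version of \eqref{equation:pairing}, then invokes local duality for tori \cite[Chapter I Corollary 2.4]{Milne} together with \cite[Lemma 4.3]{Loughran} for the identification of the target, so your steps 3--5 and the existence-theorem argument just unpack what the paper cites. One correction: step 2 is false as stated. Since $\rH^1(L,X^*)=0$, inflation $\rH^2(\mathrm{Gal}(L/F_c),X^*)\to\rH^2(F_c,X^*)$ is injective, but it is not surjective, because $\rH^2(L,X^*)\cong\mathrm{Hom}_{\mathrm{cont}}(G_L,X^*\otimes\mathbb Q/\mathbb Z)\neq 0$ (for $\underline{T}=\mathbb G_m$ and $L=F_c$ the source is $0$ while the target is $\mathrm{Hom}_{\mathrm{cont}}(G_{F_c},\mathbb Q/\mathbb Z)$); what you need, and what steps 4--5 actually use, is $\rH^2(F_c,X^*)=\varinjlim_L\rH^2(\mathrm{Gal}(L/F_c),X^*)$ over finite Galois extensions $L$ splitting $\underline{T}$.
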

\begin{proof}
    This follows from \cite[Chapter I Corollary 2.4]{Milne}, (\ref{equation:pairing}), and \cite[Lemma 4.3]{Loughran}.
\end{proof}

Finally we prove the following theorem:
\begin{theo}[{\cite[Theorem 4.5]{Loughran}}]
\label{theo:duality}
    The pairing
    \[
    \Br_e(\underline{T}) \times \underline{T}(\mathbb A_F)/\underline{T}(F) \to \mathbb Q/\mathbb Z
    \]
    is bilinear and it induces the exact sequence
    \[
    0 \to \mathfrak B(T) \to \Br_e(\underline{T}) \to (\underline{T}(\mathbb A_F)/\underline{T}(F) )^\sim \to 0.
    \]
\end{theo}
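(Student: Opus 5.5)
We follow the proof of \cite[Theorem 4.5]{Loughran}, replacing number-field arithmetic duality with its function-field counterpart, and treating only the $\ell$-primary parts for $\ell \neq \mathrm{char}\,\mathbf k$ where Tate--Nakayama duality is needed. Throughout we work with the cohomological descriptions: $\Br_e(\underline{T}) \cong \rH^2(F, X^*(\underline{T}_{F^s}))$, and the pairing with rational and local points is the cup product into $\rH^2(F,F^{s\times})$ or $\rH^2(F_c,F_c^{s\times})$, as in \eqref{equation:pairing}.

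\textbf{Step 1: the pairing is well defined and bilinear.} For $b \in \Br_e(\underline{T})$ and $(g_c) \in \underline{T}(\mathbb A_F)$, define the pairing as $\sum_c \mathrm{inv}_c(b(g_c))$. For almost all $c$, both $b$ and $g_c$ are unramified, so $b(g_c) \in \Br(\mathfrak o_c) = 0$; hence the sum is finite. Bilinearity follows termwise from the local bilinearity in the previous lemma. The pairing vanishes on $\underline{T}(F)$ because for $g \in \underline{T}(F)$ we have $\sum_c \mathrm{inv}_c(b(g)) = 0$ by global class field theory, that is, the exactness of $0 \to \Br(F) \to \oplus_c \Br(F_c) \to \mathbb Q/\mathbb Z \to 0$ for function fields.

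\textbf{Step 2: identify the kernel.} Injectivity of $\mathfrak B(\underline T)\to\Br_e(\underline T)$ is automatic, since these are subgroups once one observes that $\Br_1(\underline{T})/\Br(F) \cong \Br_e(\underline{T})$ via $b \mapsto b - b(1)$. Next suppose $b$ pairs trivially with all of $\underline{T}(\mathbb A_F)$. Taking adeles concentrated at one place $c$ shows that $b|_{F_c}$ pairs trivially with $\underline{T}(F_c)$. By the local theorem above, $\Br_e(\underline{T}_{F_c}) \cong \underline{T}(F_c)^\sim$, so $b_c = 0$. Hence $b$ lies in $\mathfrak B(\underline{T})$. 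Conversely, elements of $\mathfrak B(\underline{T})$ are locally trivial and so pair to zero.

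\textbf{Step 3: surjectivity onto $(\underline{T}(\mathbb A_F)/\underline{T}(F))^\sim$.} This is the main obstacle. Following Loughran, we use the Poitou--Tate exact sequence for the dual pair consisting of the torus $\underline{T}$ and the module $X^*(\underline{T}_{F^s})$:
\[
\rH^0(F,\underline{T}) \to \textstyle\prod'_c \rH^0(F_c,\underline{T}) \to \rH^2(F,X^*)^\sim \to \Sha^1(\underline{T}) \to 0 .
\]
Here the second map is dual to $\rH^2(F,X^*) \to \oplus_c \rH^2(F_c,X^*)$. Dualizing, any character of $\underline{T}(\mathbb A_F)/\underline{T}(F)$ (continuous, of finite order, hence trivial on an open subgroup) should lift to an element of $\oplus_c \rH^2(F_c,X^*)$ whose sum of invariants kills the image of $\underline T(F)$. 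Exactness at $\oplus_c\rH^2(F_c,X^*)$, namely
\[
\rH^2(F,X^*) \to \oplus_c \rH^2(F_c,X^*) \to \rH^0(F,\underline{T})^\sim,
\]
then produces a global class.

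We would first establish this exactness on $\ell$-primary parts via \cite[Chapter II Proposition 4.14]{Milne}, exactly as was done above for $\Sha(\underline T)\{\ell\}\cong\mathfrak B(\underline T)^\sim\{\ell\}$. For the $p$-primary part, we would use instead that $X^*$ is a lattice, so $\rH^2(F,X^*) \cong \rH^1(F, X^*\otimes\mathbb Q/\mathbb Z)$. Its $p$-part is computed by finite-étale, hence Galois, cohomology of $p$-power order modules of multiplicative-dual type, and Milne's flat duality (Chapter III) supplies the needed exactness.

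\textbf{Step 4: handling the adelic topology.} We must check that $\underline{T}(\mathbb A_F)/\underline{T}(F)$, although not compact, has finite-order characters that are determined place by place and vanish on $\prod_{c\notin S}\underline T(\mathfrak o_c)$ for a finite set $S$. This reduces the problem to finitely many local components, where the local isomorphism applies. Combining Steps 1--4 gives the exact sequence $0\to\mathfrak B(\underline T)\to\Br_e(\underline T)\to(\underline T(\mathbb A_F)/\underline T(F))^\sim\to 0$.
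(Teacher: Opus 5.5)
\textbf{There is a genuine gap in the surjectivity argument.}

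Your Steps 1 and 2 are fine. Identifying the kernel directly through the local isomorphism $\Br_e(\underline T_{F_c})\cong\underline T(F_c)^\sim$ is a legitimate shortcut compared with going through Sansuc's pairing $\Sha(\underline T)\times\mathfrak B(\underline T)\to\mathbb Q/\mathbb Z$. The gap is in surjectivity (Steps 3--4).

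First, the local components of a character of $\underline T(\mathbb A_F)/\underline T(F)$ are in general nonzero at infinitely many places. The same holds for the localizations of a global class in $\rH^2(F,X^*(\underline T_{F^s}))$. The reason is that for a lattice the unramified part of $\rH^2(F_c,X^*)$ is nonzero (for $X^*=\mathbb Z$ it is $\rH^1(\mathbf k_c,\mathbb Q/\mathbb Z)\cong\mathbb Q/\mathbb Z$), unlike for finite modules. For example, take $\underline T=\mathbb G_m$ and $\psi\colon\mathbb A_F^\times/F^\times\xrightarrow{\deg}\mathbb Z\to\mathbb Z/m\mathbb Z$. Then $\psi$ is trivial on $\prod_c\mathfrak o_c^\times$, yet $\psi_c$ sends a uniformizer to $\pm|c| \bmod m$. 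The corresponding global class is the constant field extension $\mathbb F_{q^m}F/F$, which is nonzero in $\rH^1(F_c,\mathbb Q/\mathbb Z)$ whenever $m\nmid|c|$. So the map $\rH^2(F,X^*)\to\bigoplus_c\rH^2(F_c,X^*)$ you display does not exist. Likewise, Step 4's claim that vanishing on $\prod_{c\notin S}\underline T(\mathfrak o_c)$ reduces matters to finitely many local components is false (here $S=\emptyset$). In any case, local isomorphisms place by place cannot by themselves produce a global class.

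Second, replace $\bigoplus_c$ by the restricted product $\prod'_c$. Even then, via local duality the exactness of $\rH^2(F,X^*)\to\prod'_c\rH^2(F_c,X^*)\to\underline T(F)^\sim$ is just a restatement of the surjectivity you are trying to prove. So Step 3 is circular unless you cite a Poitou--Tate theorem for tori over global function fields containing exactly that exactness. For the $p$-primary part you only assert that Milne's flat duality supplies it.

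\textbf{The missing idea} is to use the half of duality you actually wrote down, $\underline T(F)\to\prod'_c\underline T(F_c)\to\rH^2(F,X^*)^\sim\to\Sha(\underline T)\to0$, and dualize it, rather than switching to exactness at the local $\rH^2$ terms. This is the paper's route, following Loughran:
\begin{itemize}
\item $\underline T(\mathbb A_F)/\underline T(F)$ sits as a closed subgroup of finite index in Loughran's group $G(\underline T)$, with the index measured by $\Sha(\underline T)$.
\item The only new input over function fields is that $\Sha(\underline T)$ is finite, by Conrad.
\item Surjectivity is then formal: a closed finite-index subgroup is open, and $\mathbb Q/\mathbb Z$ is divisible, so every continuous finite-order character extends from it to $G(\underline T)$.
\item The kernel becomes the dual of the finite quotient, i.e.\ $\mathfrak B(\underline T)$ via the $\Sha\times\mathfrak B$ pairing. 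Alternatively, you can identify it directly as in your Step 2.
\end{itemize}
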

\begin{proof}
    This follows from the proof of \cite[Theorem 4.5]{Loughran}. In its notation, Loughran claimed that $T(\mathbb A_F)/T(F)$ is a closed subgroup of finite index in $G(T)$.
    This finite index is measured by $\Sha(\underline{T})$ which is also finite over global function fields by \cite{conrad}.
    Thus the proof of \cite[Theorem 4.5]{Loughran} goes through.
\end{proof}

\begin{coro}[{\cite[Corollary 4.6]{Loughran}}]
    Suppose that $T$ is rational. Then $\mathfrak B(T) = 0$ so that we have an isomorphism
    \[
    \Br_e(\underline{T}) \cong (\underline{T}(\mathbb A_F)/\underline{T}(F) )^\sim.
    \]
\end{coro}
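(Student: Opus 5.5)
The plan is to combine Theorem~\ref{theo:duality} with the vanishing of $\mathfrak B(\underline{T})$ for rational tori. Once $\mathfrak B(\underline{T}) = 0$ is known, the exact sequence
\[
0 \to \mathfrak B(\underline{T}) \to \Br_e(\underline{T}) \to (\underline{T}(\mathbb A_F)/\underline{T}(F))^\sim \to 0
\]
immediately yields the isomorphism. So the whole content is the vanishing statement.

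To show $\mathfrak B(\underline{T}) = 0$, I would follow Sansuc's argument, as Loughran does. Since $\underline{T}$ is rational, it is in particular stably rational, hence birational to projective space after adding a product. Choose a smooth projective compactification $\underline{X}$ of $\underline{T}$ (which exists by toric geometry, e.g.\ via a Galois-equivariant smooth projective fan over the splitting field). Then $\Br_1(\underline{X}) \subset \Br_1(\underline{T})$, and by \cite[Lemma 6.9 and Proposition 9.5]{Sansuc} the group $\mathfrak B(\underline{T})$ is identified with $\Sha^2_\omega(F, X^*(\underline{T}_{F^s}))$, the classes in $\rH^2$ locally trivial at almost all places.

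Via Poitou--Tate duality, or equivalently the pairing with $\Sha(\underline{T})$ above, this group is dual to $\Sha^1_\omega(F, \Pic(\underline{X}_{F^s}))$. This is a birational invariant of smooth projective varieties and vanishes for projective space. It also vanishes for any stably rational variety, since $\Pic(\underline{X}_{F^s})$ is then a stably permutation module and $\rH^1$ of permutation modules over procyclic decomposition groups vanishes. Hence $\mathfrak B(\underline{T}) = 0$.

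The main obstacle is characteristic $p$. The duality statements are only guaranteed on $\ell$-primary parts for $\ell \neq p$, as noted for \cite[Proposition 8.3]{Sansuc}. To handle this, I would avoid duality on the $p$-part and argue directly that $\Br_e(\underline{T}) \cong \rH^2(F, X^*(\underline{T}_{F^s}))$ has locally trivial elements zero. Take the flasque resolution $0 \to S \to P \to X^*(\underline{T}_{F^s}) \to 0$ with $P$ permutation and $S$ flasque; rationality makes $S$ stably permutation up to an invertible summand. The permutation part contributes $\rH^2(F, P) = \oplus \Br(F')$, which injects into local Brauer groups by the Hasse principle for Brauer groups over global function fields, and this holds including the $p$-part. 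One then has to check that the $\rH^3(F, S)$ term is harmless, which works because $S$ is permutation and $\rH^3(F',\mathbb Z) \cong \rH^2(F', \mathbb Q/\mathbb Z)$ also satisfies a local-global principle. This diagram chase is where I expect the real work to lie.
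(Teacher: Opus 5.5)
Your reduction is the same as the paper's: once $\mathfrak B(\underline{T})=0$, the exact sequence of Theorem~\ref{theo:duality} gives the isomorphism. The paper then simply defers to Loughran for the vanishing. The gap is in your vanishing argument at the prime $p=\mathrm{char}\,\mathbf k$.

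You pass from $\mathfrak B(\underline{T})$ to $\Pic(\underline{X}_{F^s})$ ``via Poitou--Tate duality, or equivalently the pairing with $\Sha(\underline{T})$''. As you note yourself, over a global function field that duality only reaches $\ell$-primary parts. So the $p$-part rests entirely on your last paragraph, which does not work as written, for two reasons.

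\begin{enumerate}
\item $\rH^2$ of a permutation lattice is not a sum of Brauer groups. By Shapiro's lemma and $\rH^1(L,\mathbb Q)=\rH^2(L,\mathbb Q)=0$, one has $\rH^2(F,\mathbb Z[G/H])\cong \rH^2(L,\mathbb Z)\cong \mathrm{Hom}(G_L,\mathbb Q/\mathbb Z)$ with $L=(F^s)^H$. This is a group of Galois characters, so the Hasse principle for $\Br$ is not the relevant input.
\item With your orientation $0\to S\to P\to M\to 0$ (where $M=X^*(\underline{T}_{F^s})$), the long exact sequence gives at best a surjection $\rH^2(F,P)\twoheadrightarrow \rH^2(F,M)$. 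What you need is an injection of $\rH^2(F,M)$ into something satisfying a local--global principle. A lift of a locally trivial class is controlled locally only modulo the image of $\rH^2(F_c,S)$, and that is exactly the chase you leave open. Moreover, rationality of $\underline{T}$ is a statement about the flasque \emph{quotient} $Q$ in $0\to M\to P\to Q\to 0$, not about a kernel of $P\to M$.
\end{enumerate}

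So as written, the $p$-primary part of $\mathfrak B(\underline{T})$ is not established.

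The missing point is that no duality is needed. The link between $\mathfrak B(\underline{T})$ and $\Pic(\underline{X}_{F^s})$ is an isomorphism coming from a long exact sequence. Take $0\to M\to P\to \Pic(\underline{X}_{F^s})\to 0$, with $P$ the permutation lattice on the boundary divisors of your toric compactification.
\begin{itemize}
\item $\rH^1$ of a permutation lattice vanishes over $F$ and over every $F_c$.
\item $\Sha^2(F,P)\cong\bigoplus_j \Sha^1(L_j,\mathbb Q/\mathbb Z)=0$ by Chebotarev: a cyclic extension split at almost all places is trivial. This holds for global function fields, including characters of $p$-power order.
\end{itemize}
The long exact sequences then give
\[
\mathfrak B(\underline{T})\cong \Sha^2(F,M)\cong \Sha^1(F,\Pic(\underline{X}_{F^s}))
\]
in every characteristic and at every prime. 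This is an isomorphism, not the duality you invoked.

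For rational $\underline{T}$, the lattice $\Pic(\underline{X}_{F^s})$ is stably permutation, so $\rH^1(F,\Pic(\underline{X}_{F^s}))=0$. Hence $\mathfrak B(\underline{T})=0$, and Theorem~\ref{theo:duality} yields the isomorphism. In the split case where the paper actually applies this, $M$ is itself permutation, and $\mathfrak B(\underline{T})=\Sha^2(F,M)=0$ follows directly from Chebotarev.
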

\begin{proof}
    See the proof of \cite[Corollary 4.6]{Loughran}.
\end{proof}

\begin{proof}[Proof of Lemma~\ref{lemm:Loughran}]
    This follows from Theorem~\ref{theo:duality} and the second commutative diagram in \cite[Lemma 4.7]{Loughran}. This second diagram is valid in our setting. A key to this is the fact that $\Delta_i$ is rational for any $i$. See \cite[Lemma 3.25]{SS24} for more details.
\end{proof}

\bibliographystyle{alpha}
\bibliography{CampanaCurve}

\end{document}